\newtheorem{theorem}{Theorem}[section]
\newtheorem{lemma}[theorem]{Lemma}
\theoremstyle{definition}
\newtheorem{definition}[theorem]{Definition}
\theoremstyle{remark}
\newtheorem{remark}[theorem]{Remark}
\numberwithin{equation}{section}
\begin{document}

\title{The Unified Surface Ricci Flow}

\author{Min Zhang}
\address{ Min Zhang, Department of Computer Science,}
\address{State University of New York at Stony Brook, NY 11794, USA}
\email{mzhang@cs.sunysb.edu}

\author{Ren Guo}
\address{ Ren Guo, Department of Mathematics,}
\address{ Oregon State University, OR 97331, USA}
\email{ guore@math.oregonstate.edu}

\author{Wei Zeng}
\address{Wei Zeng, School of Computing and Information Sciences,}
\address{Florida International University,FL 33199, USA}
\email{ wzeng@cs.fiu.edu}

\author{Feng Luo}
\address{ Feng Luo, Department of Mathematics, }
\address{Rutgers University, NJ 08854, USA}
\email{fluo@math.rutgers.edu}

\author{Shing-Tung Yau}
\address{ Shing-Tung Yau, Deaprment of Mathematics, }
\address{Harvard University, MA 02138, USA}
\email{ yau@math.harvard.edu}

\author{Xianfeng Gu}
\address{ Xianfeng Gu, Department of Computer Science, }
\address{State University of New York at Stony Brook, NY 11794, USA}
\email{gu@cs.sunysb.edu}

\maketitle

\begin{abstract}
Ricci flow deforms the Riemannian metric proportionally to the curvature, such that the curvature evolves according to a heat diffusion process and eventually becomes constant everywhere. Ricci flow has demonstrated its great potential by solving various problems in many fields, which can be hardly handled by alternative methods so far.

This work introduces the unified theoretic framework for discrete
Surface Ricci Flow, including all the common schemes: Tangential
Circle Packing, Thurston's Circle Packing, Inversive Distance Circle
Packing and Discrete Yamabe Flow. Furthermore, this work also
introduces a novel schemes, Virtual Radius Circle Packing and the
Mixed Type schemes, under the unified framework. This work gives
explicit geometric interpretation to the discrete Ricci energies for
all the schemes with all back ground geometries, and the
corresponding Hessian matrices.

The unified frame work deepens our understanding to the the discrete surface Ricci flow theory, and has inspired us to
discover the new schemes, improved the flexibility and robustness of the algorithms, greatly simplified the implementation and improved the efficiency.
\end{abstract}

\section{Introduction}
\label{sec:introduction}

Ricci flow was introduced by Hamilton for the purpose of studying
low dimensional topology. Ricci flow deforms the Riemannian metric
proportional to the curvature, such that the curvature evolves
according to a heat diffusion process, and eventually becomes
constant everywhere. In pure theory field, Ricci flow has been used
for the proof of Poincar\'{e}'s conjecture. In engineering fields,
surface Ricci flow has been broadly applied for tackling many
important problems, such as parameterization in graphics
\cite{Jin08TVCGRicci}, deformable surface registration in vision
\cite{TPAMI10Ricci}, manifold spline construction in geometric
modeling \cite{DBLP:journals/cad/GuHJLQY08} and cancer detection in
medical imaging \cite{TVCG10Colon}. More applications in engineering and medicine fields can be found in \cite{ricci2013}.

Suppose $(S,\mathbf{g})$ is a metric surface, according to the
Gauss-Bonnet theorem, the total Gaussian curvature $\int_S K dA_{\mathbf{g}}$ equals
to $2\pi \chi(S)$, where $K$ is the Gaussian curvature, $\chi(S)$
the Euler characteristics of $S$. Ricci flow deforms the Riemannian
metric conformally, namely, $\mathbf{g}(t)=e^{2u(t)}\mathbf{g}(0)$,
where $u(t): S\to \mathbb{R}$ is the conformal factor. The
normalized Ricci flow can be written as
\begin{equation}
    \frac{du(t)}{dt} = \frac{2\pi\chi(S)}{A(0)}-K(t).
    \label{eqn:smooth_ricci_flow}
\end{equation}
where $A(0)$ is the initial surface area. Hamilton
\cite{Hamilton_1988} and chow \cite{Chow_1991} proved the
convergence of surface Ricci flow. Surface Ricci flow is the
negative gradient flow of the Ricci energy. It is a powerful tool for
designing Riemannian metrics using prescribed curvatures, which has
great potential for many applications in engineering fields. Surface
Ricci flow implies the celebrated surface uniformization theorem as
shown in Fig.\ref{fig:uniformization_closed_surfaces}. For surfaces
with boundaries, uniformization theorem still holds as illustrated
in Fig.\ref{fig:uniformization_open_surfaces}, where surfaces are
conformally mapped to the circle domains on surfaces with constant
curvatures.


\begin{figure}[t!]
\begin{center}
\begin{tabular}{ccc}
\includegraphics[width=1.0in]{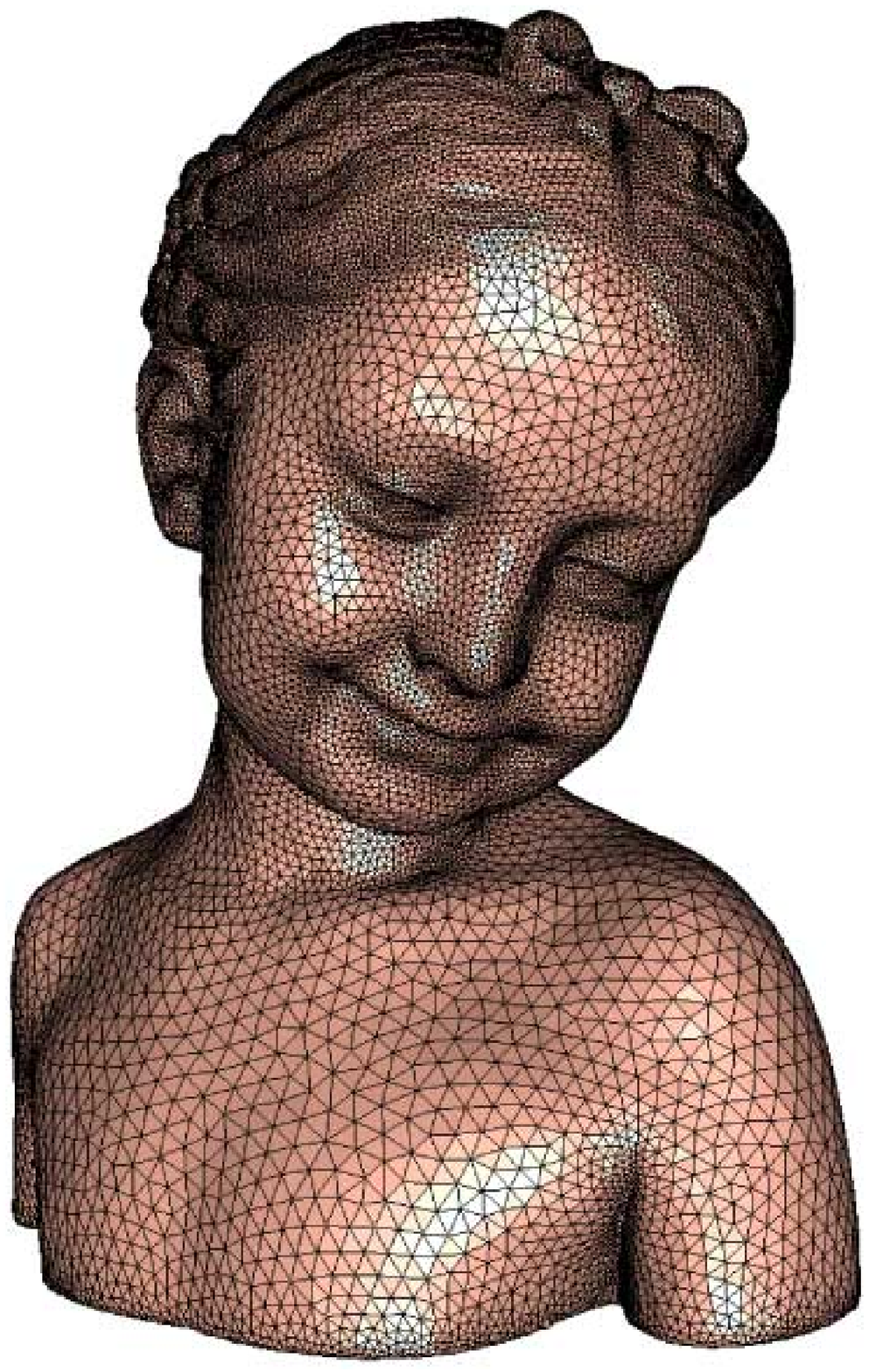}&
\includegraphics[width=1.0in]{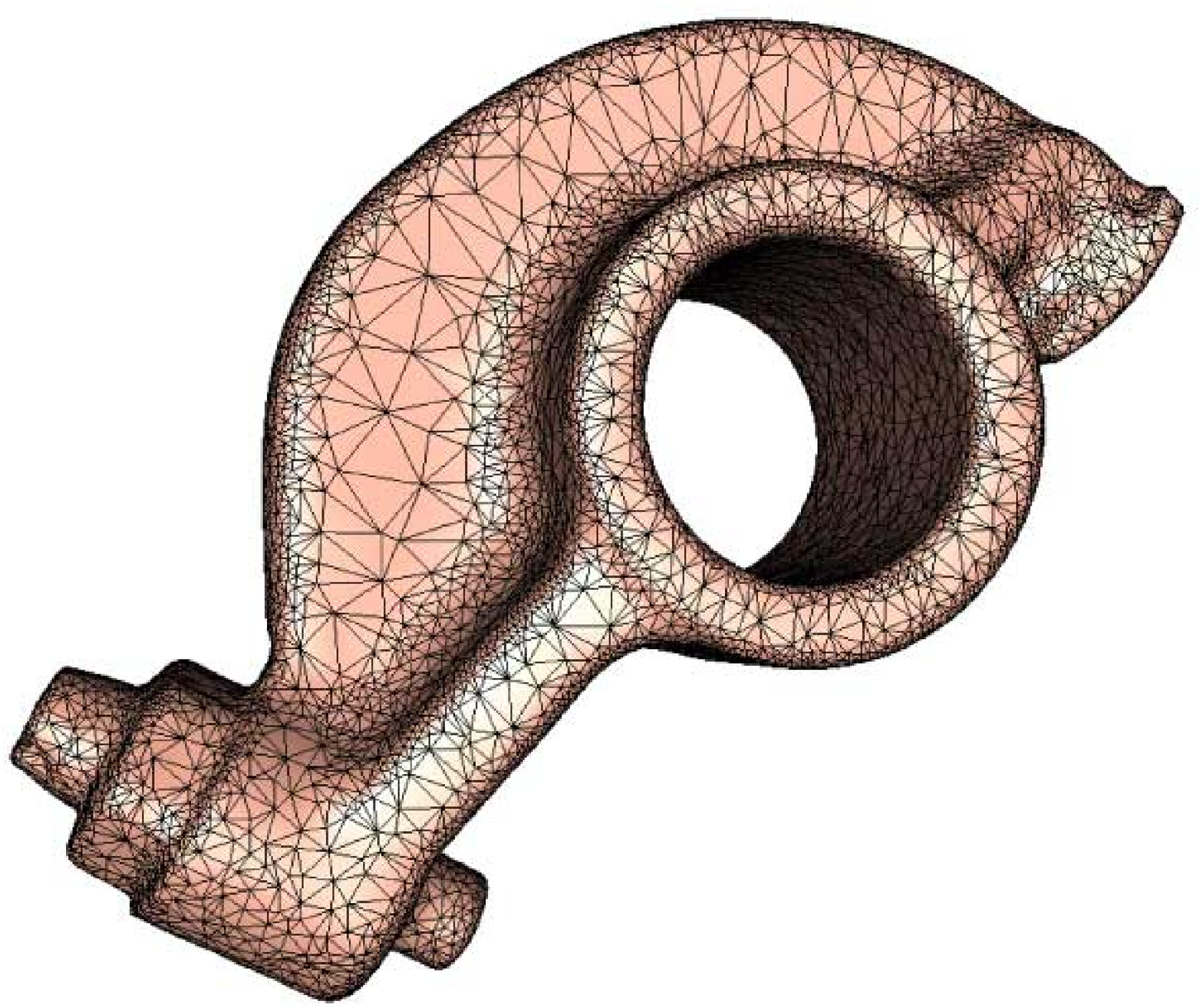}&
\includegraphics[width=1.0in]{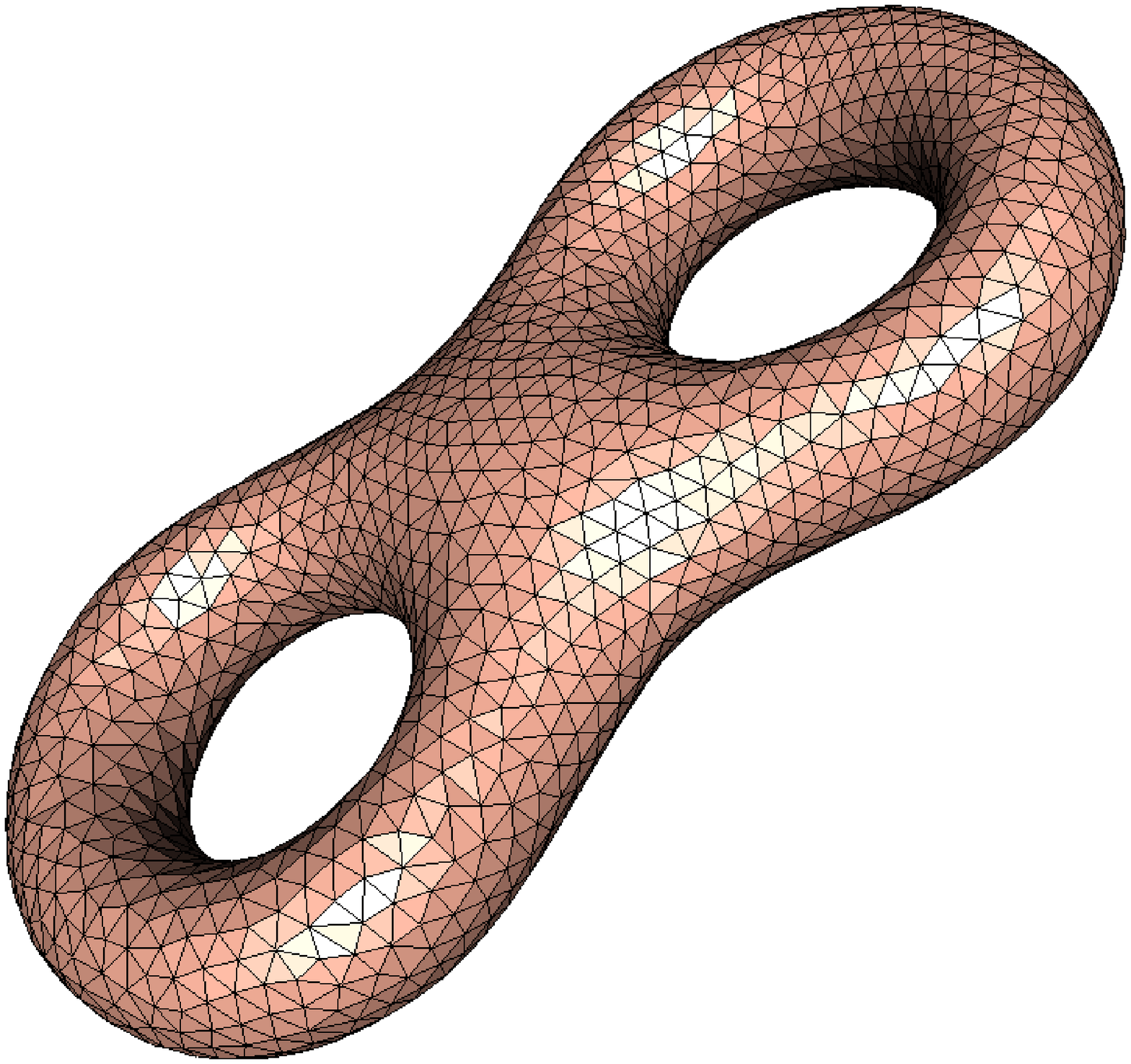}\\
\includegraphics[width=1.0in]{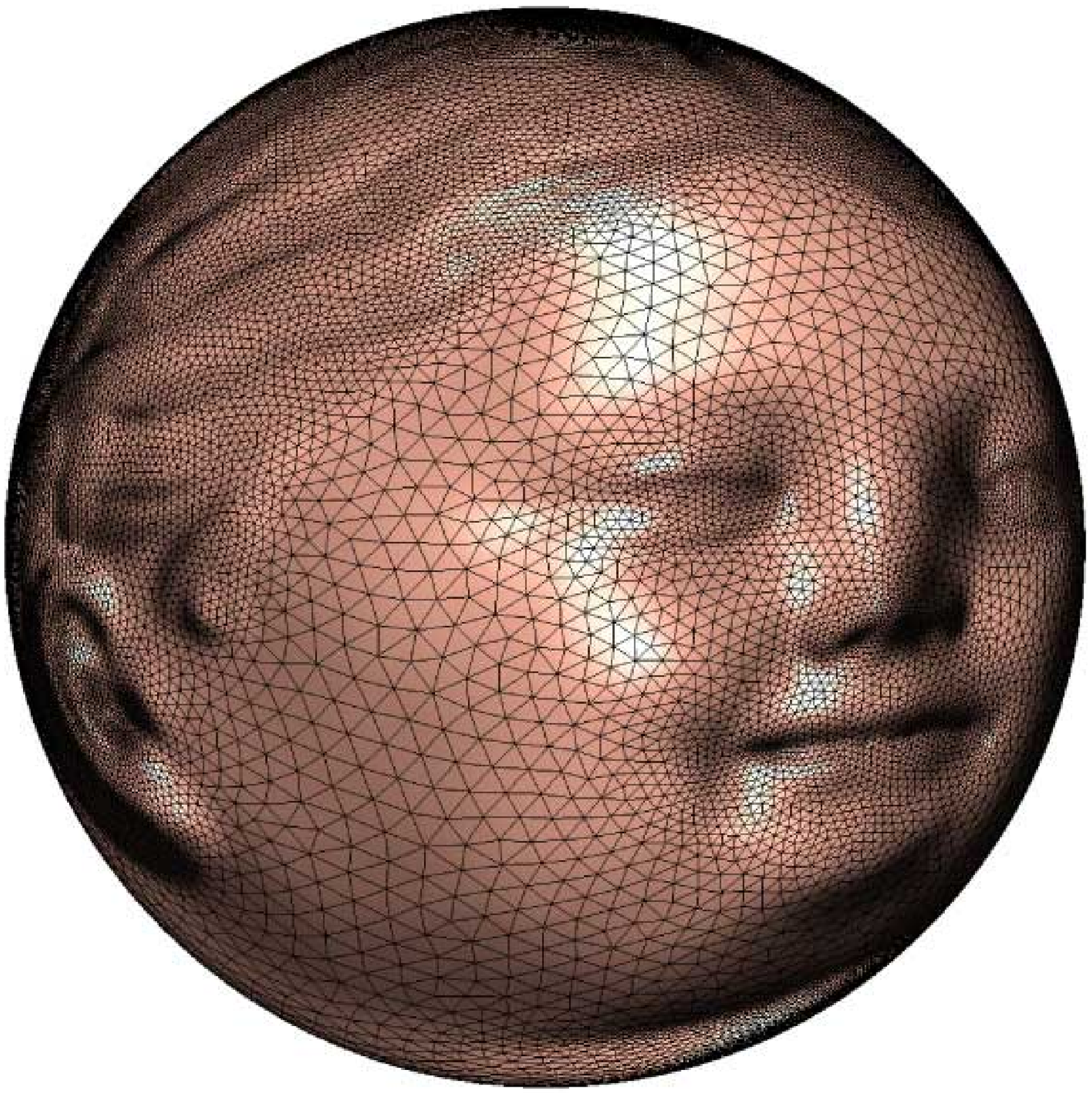}&
\includegraphics[width=1.0in]{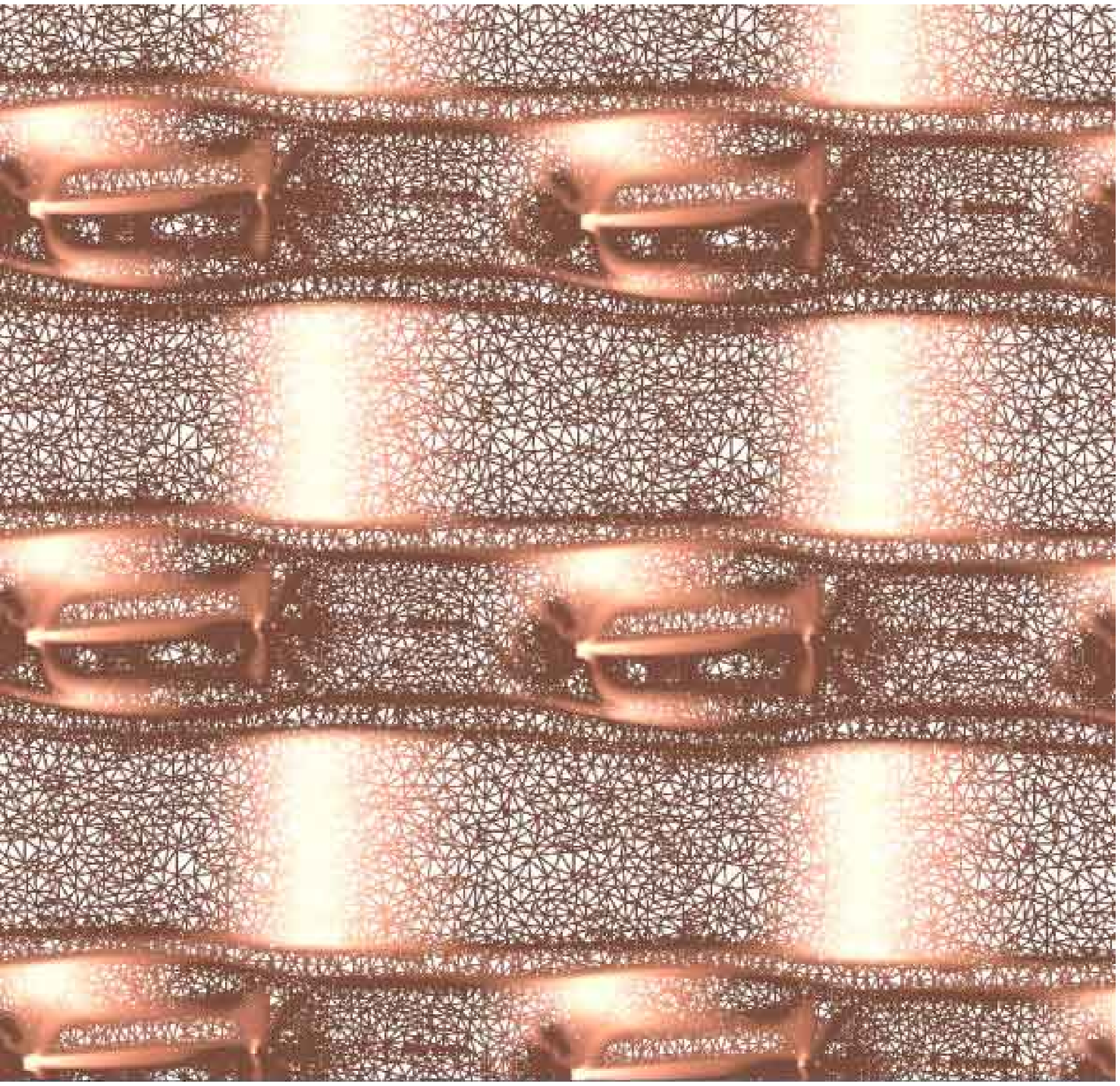}&
\includegraphics[width=1.0in]{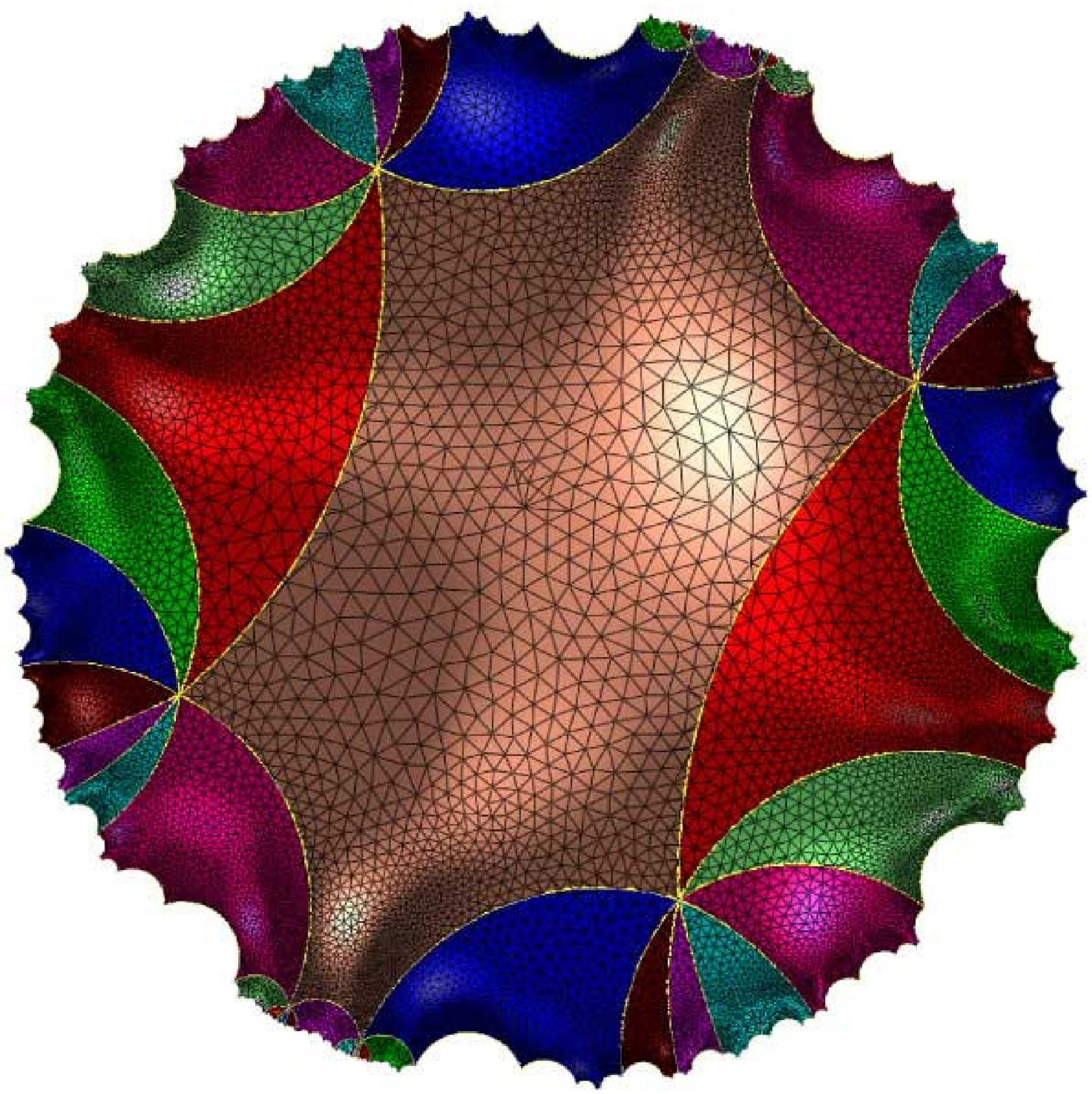}\\
\end{tabular}
\caption{Uniformization for closed surfaces by Ricci flow.\label{fig:uniformization_closed_surfaces}}
\end{center}
\end{figure}

\begin{figure}
\begin{center}
\begin{tabular}{ccc}
\includegraphics[width=1.0in]{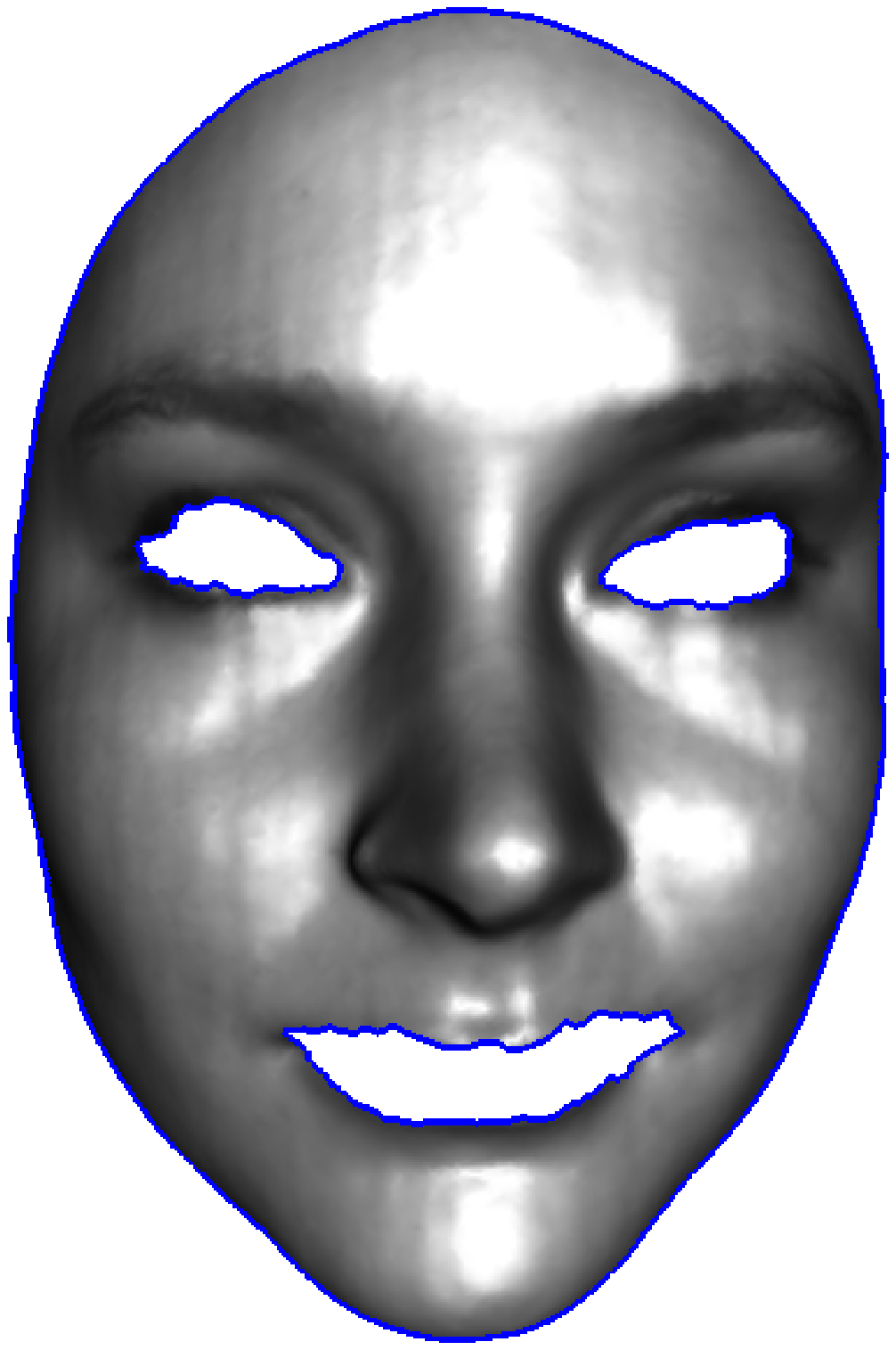}&
\includegraphics[width=1.0in]{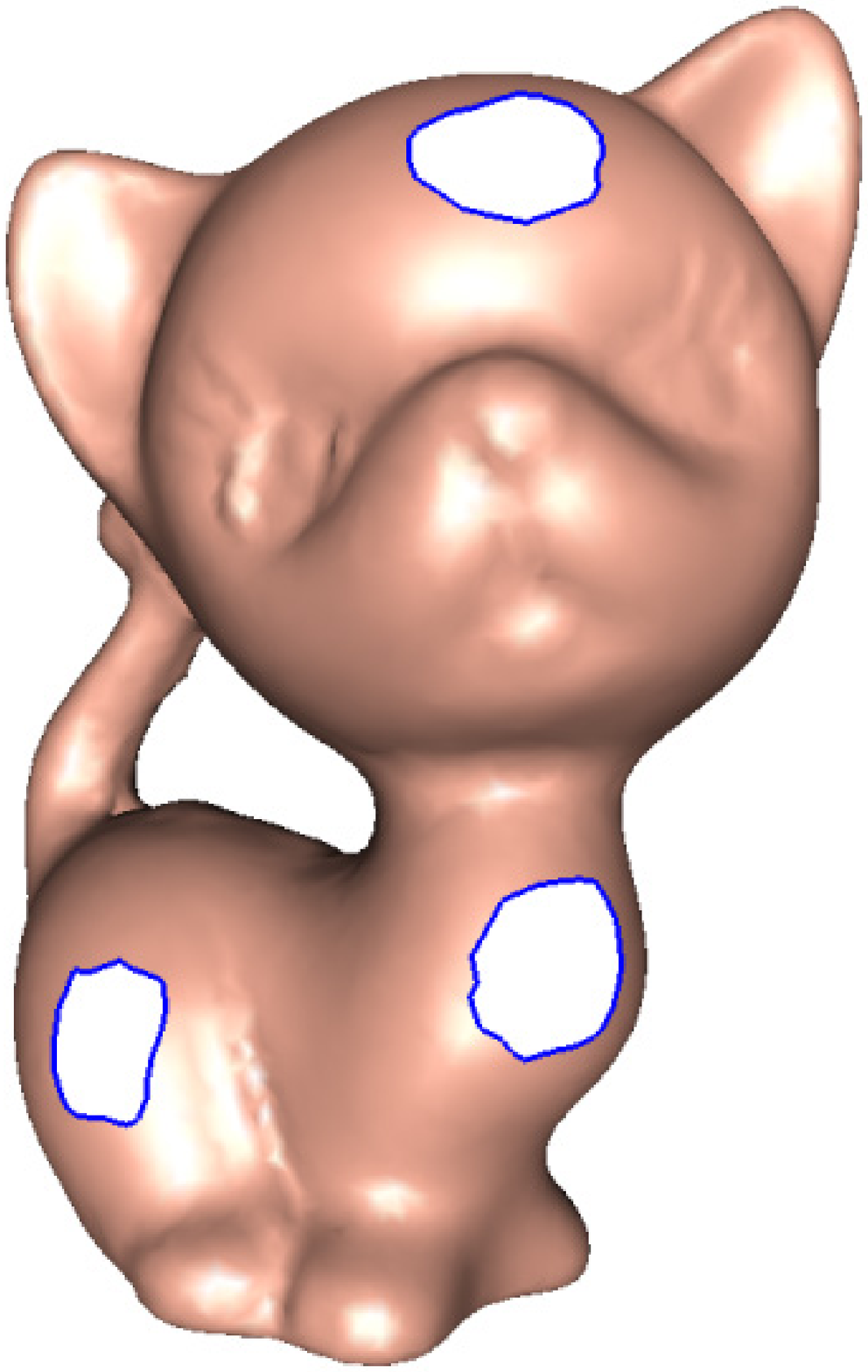}&
\includegraphics[width=1.0in]{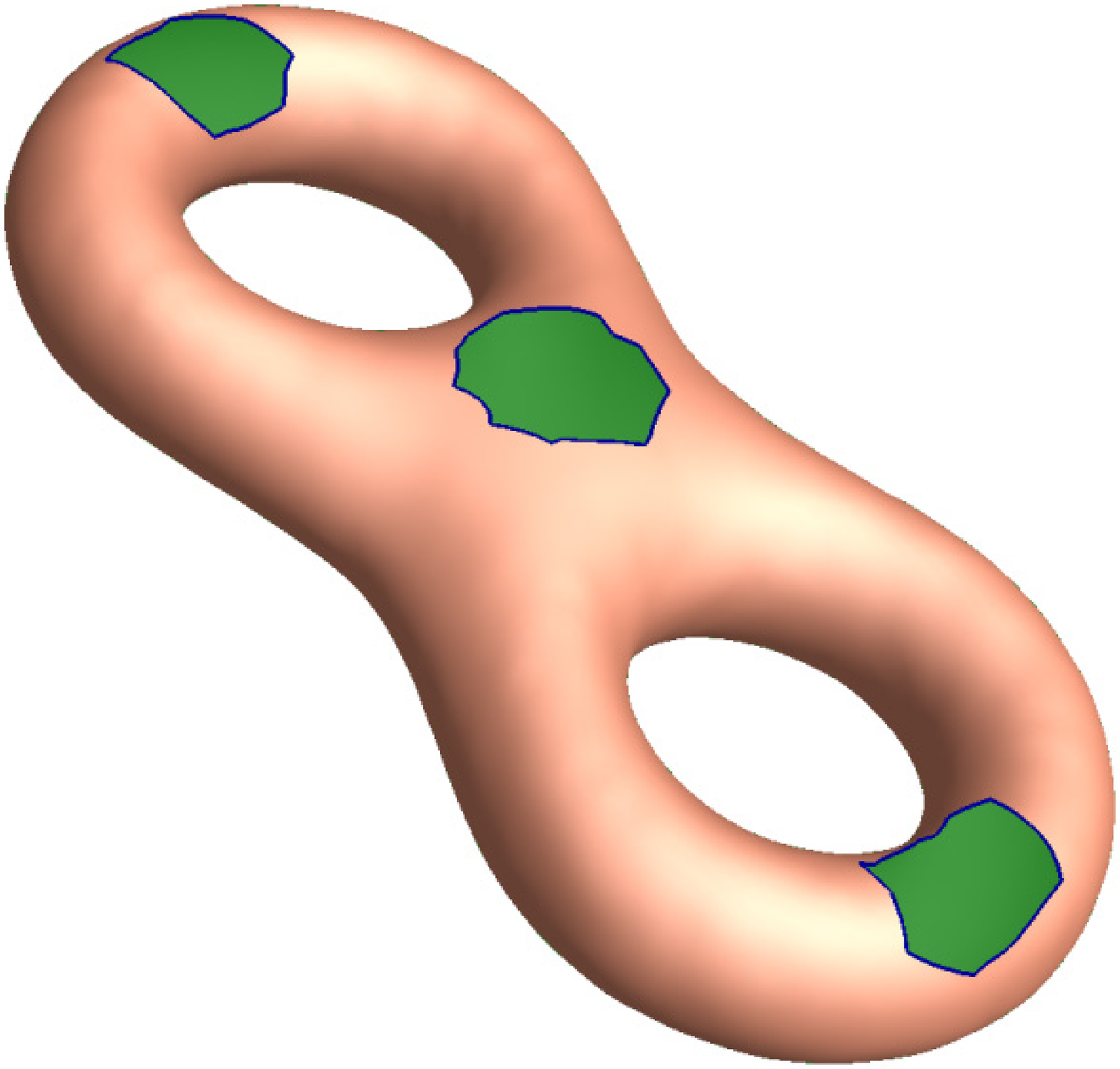}\\
\includegraphics[width=1.0in]{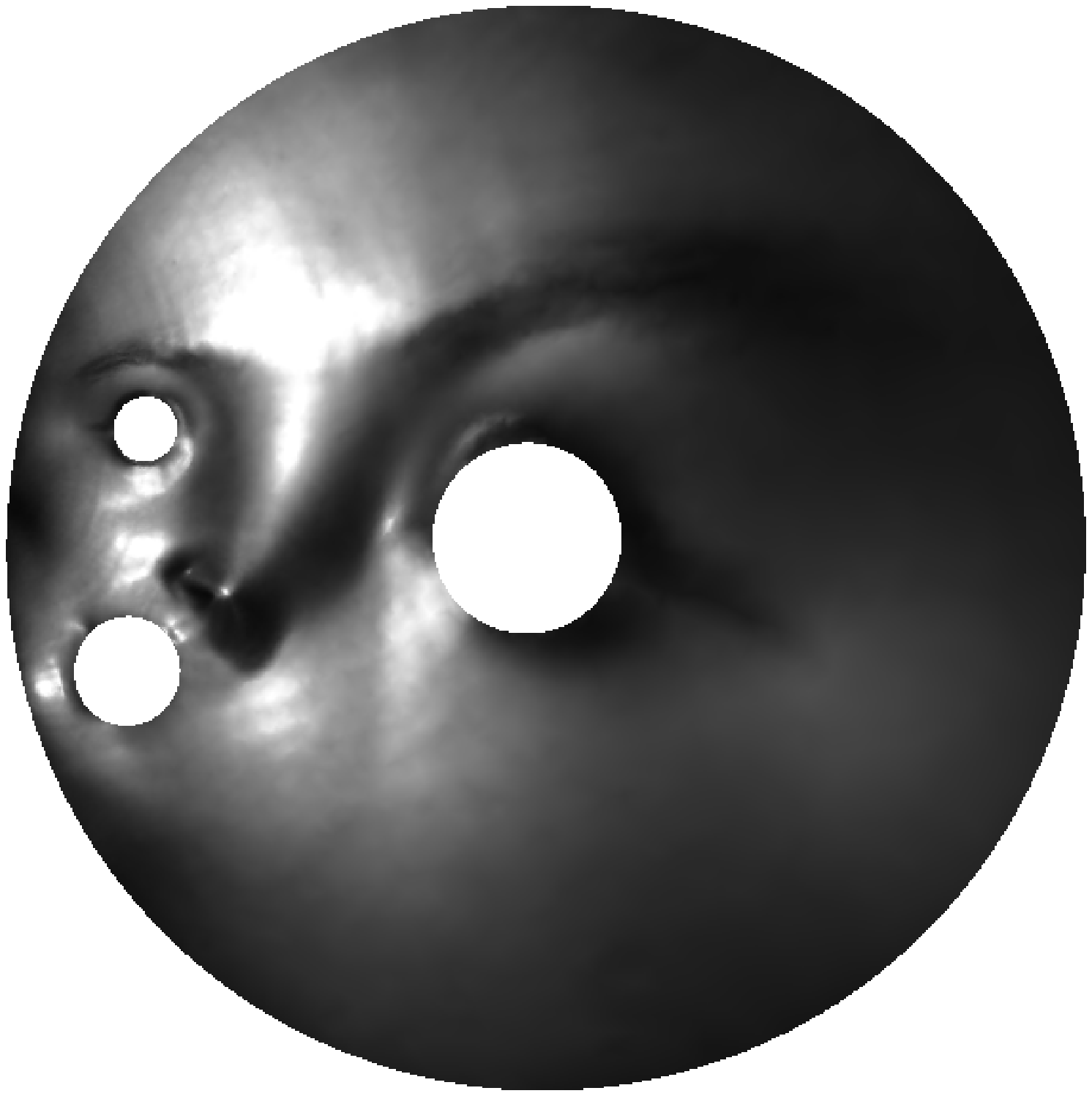}&
\includegraphics[width=1.0in]{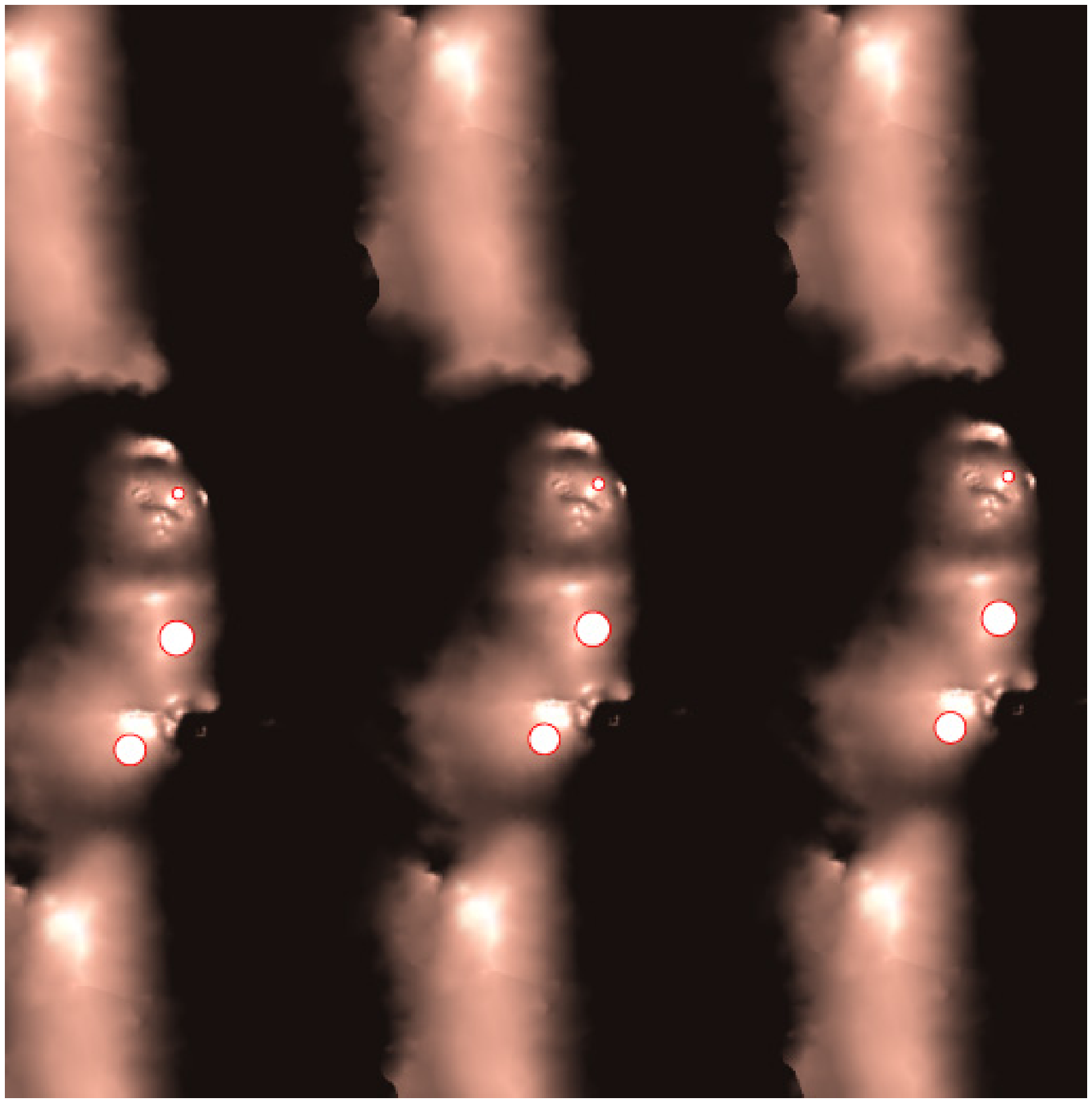}&
\includegraphics[width=1.0in]{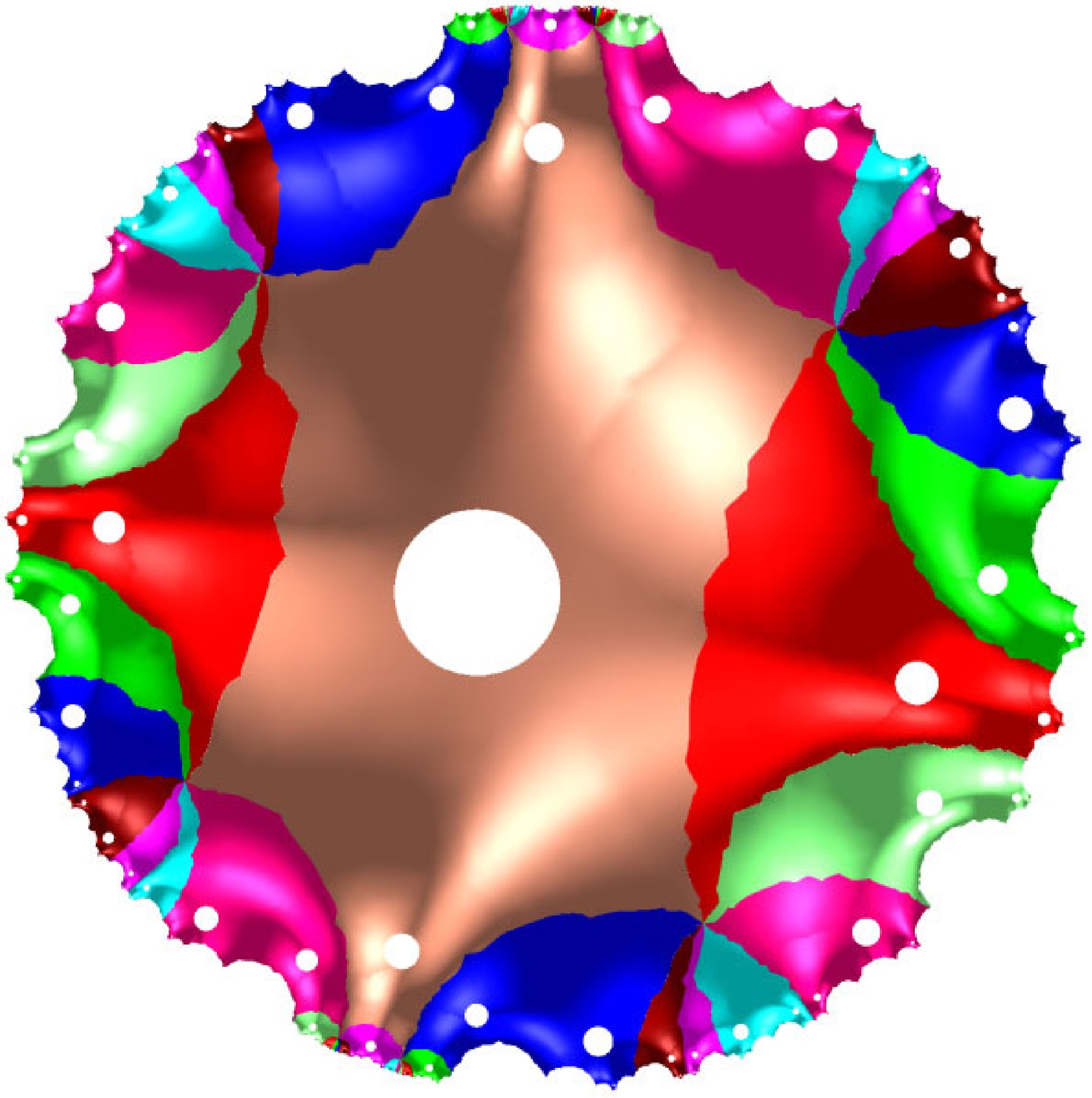}\\
\end{tabular}
\caption{Uniformization for surfaces with boundaries by Ricci flow.\label{fig:uniformization_open_surfaces}}
\end{center}
\end{figure}

\begin{figure}[t!]
\centering
\includegraphics[width=0.25\textwidth]{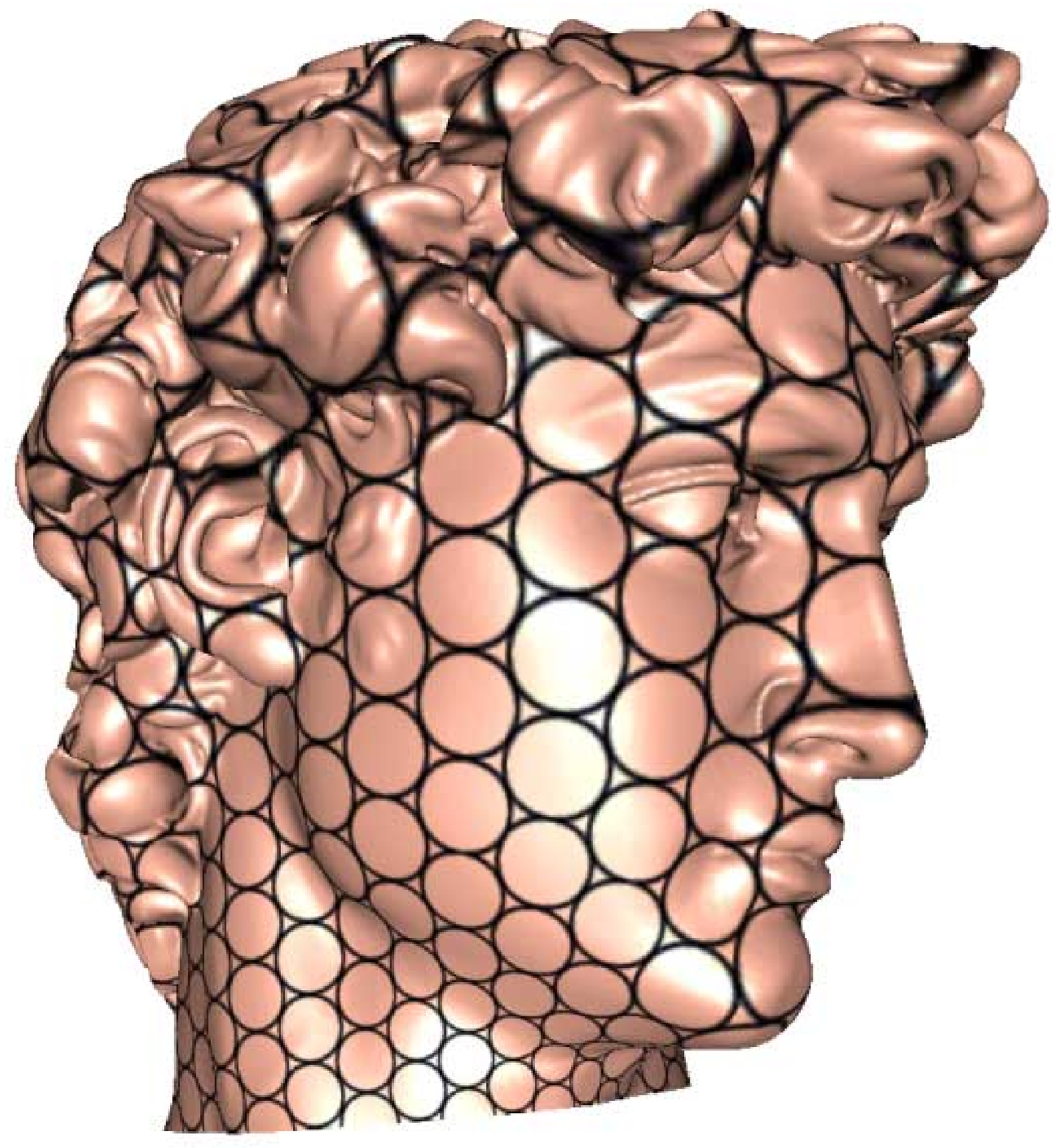}
\includegraphics[width=0.25\textwidth]{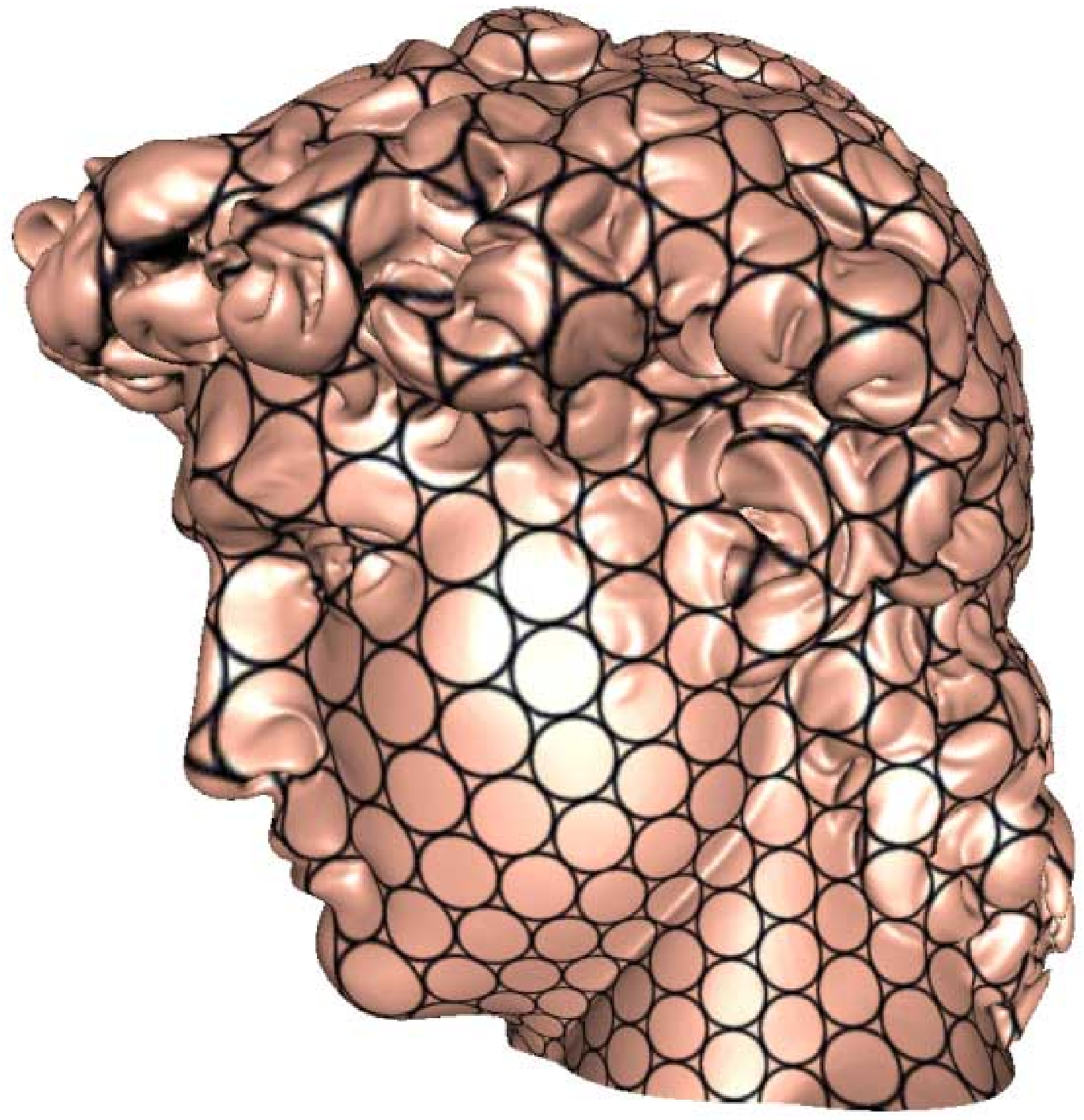}
\caption{Conformal mapping preserves infinitesimal circles.\label{fig:conformal_mapping}}
\vspace{-0.5cm}
\end{figure}

Conformal metric deformation transforms infinitesimal circles to
infinitesimal circles as shown in Fig.~\ref{fig:conformal_mapping}.
Intuitively, one approximates the surface by a triangulated
polyhedron (a triangle mesh), covers each vertex by a disk of finite
size (a cone), and deforms the disk radii preserving the
combinatorial structure of the triangulation and the intersection
angles among the circles. This deformation simulates the smooth
conformal mapping with very high fidelity. Rodin and Sullivan
\cite{Sullivan87} proved that if the triangulation of a simply
connected planar domain is subdivided infinite times, the induced
discrete conformal mappings converge to the smooth Riemann mapping.
The discrete version of surface Ricci flow was introduced by Chow
and Luo in \cite{chow_luo_03} in 2003. It is based on the circle
packing method.

Historically, many schemes of circle packing or circle pattern have
been invented. The discrete surface can be constructed by gluing
Spherical, Euclidean or Hyperbolic triangles isometrically along
their edges. Accordingly, we say the triangle mesh has spherical
$\mathbb{S}^2$, Euclidean $\mathbb{E}^2$ or hyperbolic
$\mathbb{H}^2$ background geometry. Under each background geometry,
there are $6$ schemes, tangential circle packing, Thurston's circle
packing, inversive distance circle packing, discrete Yamabe flow,
virtual radius circle packing and mixed type scheme. There are $18$
combinations in total. Among them, the hyperbolic and spherical
virtual radius circle packing and mixed type schemes are first
introduced in this work.

Most of the existing schemes were invented and developed individually in the past. This work seeks a coherent theoretic framework, which can unify all the existing schemes, and predicts undiscovered ones. This leads to deeper understandings of discrete surface Ricci flow and provides approaches for further generalization. In practice, the theoretic discovery of virtual radius circle packing gives novel computational algorithm; the mixed schemes improves the flexibility; the unified framework greatly simplifies the implementation; the geometric interpretations offer better intuitions.

\subsection{Contributions}

This work has the following contributions:

\begin{enumerate}
\item This work establishes a unified framework for discrete surface Ricci flow, which covers most existing schemes: tangential circle packing, Thurston's circle packing, inversive distance circle packing, discrete Yamabe flow, virtual radius circle packing and mixed type schemes, in Spherical, Euclidean and hyperbolic background geometry. In Euclidean case, our unified framework is equivalent to Glickenstein's geometric formulation \cite{Glickenstein_2011}.To the best of our knowledge, the unified frameworks for both hyperbolic and spherical schemes are reported for the first time.

\item This work introduces $4$ novel schemes for discrete surface Ricci flow: virtual radius circle packing and mixed type schemes under both hyperbolic and Euclidean background geometries, which are naturally deduced from our unification work. To the best of our knowledge, these are introduced to the literature for the first time.

\item This work gives an explicit geometric interpretation to the discrete Ricci energy for all the $18$ schemes. The geometric interpretations to $2$ Yamabe flow schemes (both Euclidean and Hyperbolic) were first made by Bobenko, Pinkall and Springborn \cite{Bobenko_Pinkall_Springborn_2010}.

\item This work also provides an explicit geometric interpretation
to the Hessian of discrete Ricci energy for all the $18$ schemes.
The interpretation in Euclidean case is due to
Glickenstein~\cite{Glickenstein_2011}. To the best of our knowledge,
the interpretation in Hyperbolic and Spherical cases are introduced
for the first time. Recently, Glickenstein and Thomas discovered the
similar result independently \cite{GlickensteinPreprint}.

\end{enumerate}

The paper is organized as follows: section \ref{sec:review} briefly
reviews the most related theoretic works; section
\ref{sec:unified_discrete_surface_ricci_flow}introduces the unified
framework for different schemes of discrete surface Ricci flow,
which covers $18$ schemes in total; section \ref{sec:Hessian}
explains the geometric interpretation of the Hessian matrix of
discrete Ricci energy for all schemes with different background
geometries; section \ref{sec:energy} gives a geometric
interpretation of Ricci energy; The work concludes in section \ref{sec:conclusion}, future
directions are discussed; Finally, in the appendix, we give the
implementation details and reorganize all the formulae.

\section{Previous Works}
\label{sec:review}

\paragraph{Thurston's Circle Packing}

In his work on constructing hyperbolic metrics on 3-manifolds,
Thurston \cite{Thurston97} studied a Euclidean (or a hyperbolic)
circle packing on a triangulated closed surface with prescribed
intersection angles. His work generalizes Koebe's and Andreev's
results of circle packing on a sphere
\cite{Andreev_1970_1,Andreev_1970_2,Koebe_1936}. Thurston
conjectured that the discrete conformal mapping based on circle
packing converges to the smooth Riemann mapping when the discrete
tessellation becomes finer and finer. Thurston's conjecture has been
proved by Rodin and Sullivan \cite{Sullivan87}. Chow and Luo
established the intrinsic connection between circle packing and
surface Ricci flow \cite{chow_luo_03}.

The rigidity for classical circle packing was proved by Thurston
\cite{Thurston97}, Marden-Rodin \cite{Marden_Rodin_1990}, Colin de
Verdi\'{e}re \cite{Colin_de_Verdiere_1991}, Chow-Luo
\cite{chow_luo_03}, Stephenson \cite{stephenson05}, and He
\cite{He96}.

\paragraph{Inversive Distance Circle Packing}

Bowers-Stephenson \cite{Bowers_Stephenson_2004}
introduced inversive distance circle packing which generalizes
Andreev-Thurston's intersection angle circle packing. See Stephenson
\cite{stephenson05} for more information. Guo gave a proof for local rigidity \cite{Guo_2011} of inversive distance circle packing. Luo gave a proof for global rigidity in \cite{Luo11GeomTop}.

\paragraph{Yamabe Flow}

Luo introduced and studied the combinatorial Yamabe
problem for piecewise flat metrics on triangulated surfaces
\cite{Luo_2004}. Springborn, Schr\"oder and Pinkall
\cite{Springborn_Schroder_Pinkall_2008} considered this
combinatorial conformal change of piecewise flat metrics and found
an explicit formula of the energy function. Glickenstein
\cite{Glickenstein_2005a,Glickenstein_2005b} studied the
combinatorial Yamabe flow on 3-dimensional piecewise flat manifolds. Bobenko-Pinkall-Springborn introduced a geometric interpretation to Euclidean and hyperbolic Yamabe flow using the volume of generalized hyperbolic tetrahedron in \cite{Bobenko_Pinkall_Springborn_2010}.
Combinatorial Yamabe flow on hyperbolic surfaces with boundary has
been studied by Guo in \cite{Guo_Yamabe_2011}. The existence of the solution to Yamabe flow with topological surgeries
has been proved recently in \cite{Gu_Luo_Sun_Wu_13} and \cite{Gu_Guo_Luo_Sun_Wu_14}.

\paragraph{Virtual Radius Circle Packing}

The Euclidean virtual radius circle packing first appeared in \cite{ricci2013}. The hyperbolic and spherical virtual radius circle packing are introduced in this work.

\paragraph{Mixed type Circle Packing}

The Euclidean mixed type circle packing appeared in \cite{ricci2013} and Glickenstein's talk \cite{Glickenstein_Talk}. This work introduces hyperbolic and spherical mixted type schemes.

\paragraph{Unified Framework}

Recently Glickenstein \cite{Glickenstein_2011} set the theory of
combinatorial Yamabe flow of piecewise flat metric in a broader
context including the theory of circle packing on surfaces.
This work focuses on the hyperbolic and spherical unified frameworks.

\paragraph{Variational Principle}

The variational approach to circle packing was first introduced by
Colin de Verdi\'{e}re \cite{Colin_de_Verdiere_1991}. Since then,
many works on variational principles on circle packing or circle
pattern have appeared. For example, see Br\"agger
\cite{Bragger_1992}, Rivin \cite{Rivin_1994}, Leibon
\cite{Leibon_2002}, Chow-Luo \cite{chow_luo_03}, Bobenko-Springborn
\cite{Bobenko_Springborn_2004}, Guo-Luo \cite{Guo_Luo_2009}, and
Springborn \cite{Springborn_2008}. Variational principles for
polyhedral surfaces including the topic of circle packing were
studied systematically in Luo \cite{Luo_06}. Many energy functions
are derived from the cosine law and its derivative. Tangent circle
packing is generalized to tangent circle packing with a family of
discrete curvature. For exposition of this work, see also Luo-Gu-Dai
\cite{VPDS07}.

\paragraph{Discrete Uniformization}

Recently, Gu et al established discrete uniformization theorem based
on Euclidean \cite{Gu_Luo_Sun_Wu_13} and hyperbolic
\cite{Gu_Guo_Luo_Sun_Wu_14} Yamabe flow.
 In a series of papers on developing
discrete uniformization theorem
\cite{Hersonsky_2012_1},\cite{Hersonsky_2012_2},\cite{Hersonsky_2011}
and \cite{Hersonsky_2008}, Sa'ar Hersonsky proved several important
theorems based on discrete harmonic maps and cellular
decompositions. His approach is complementary to the work mentioned
above.

\section{Unified Discrete Surface Ricci Flow}
\label{sec:unified_discrete_surface_ricci_flow}

This section systematically introduces the unified framework for
discrete surface Ricci flow. The whole theory is explained using the
variational principle on discrete surfaces based on derivative
cosine law \cite{VPDS07}. The elementary concepts and
some of schemes can be found in \cite{Luo_06} and the chapter 4 in \cite{ricci2013}.

\subsection{Elementary Concepts\index{discrete surface}}

In practice, smooth surfaces are usually approximated by
\emph{discrete surfaces}. Discrete surfaces are represented as two
dimensional simplicial complexes which are manifolds, as shown in
Fig. \ref{fig:discrete_surface}.

\begin{figure}
\begin{center}
\begin{tabular}{cc}
\includegraphics[width=0.3\textwidth]{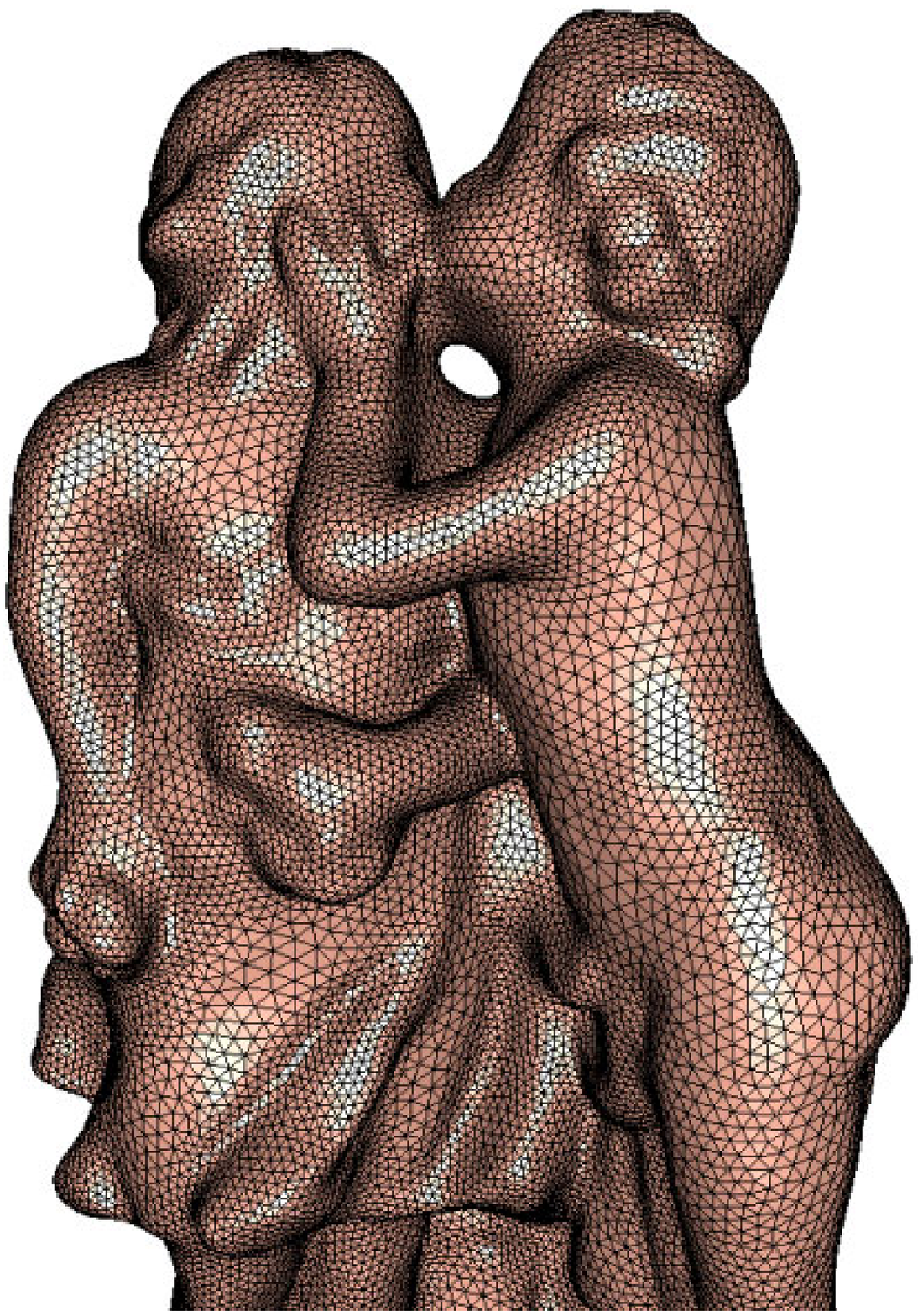}&
\includegraphics[width=0.3\textwidth]{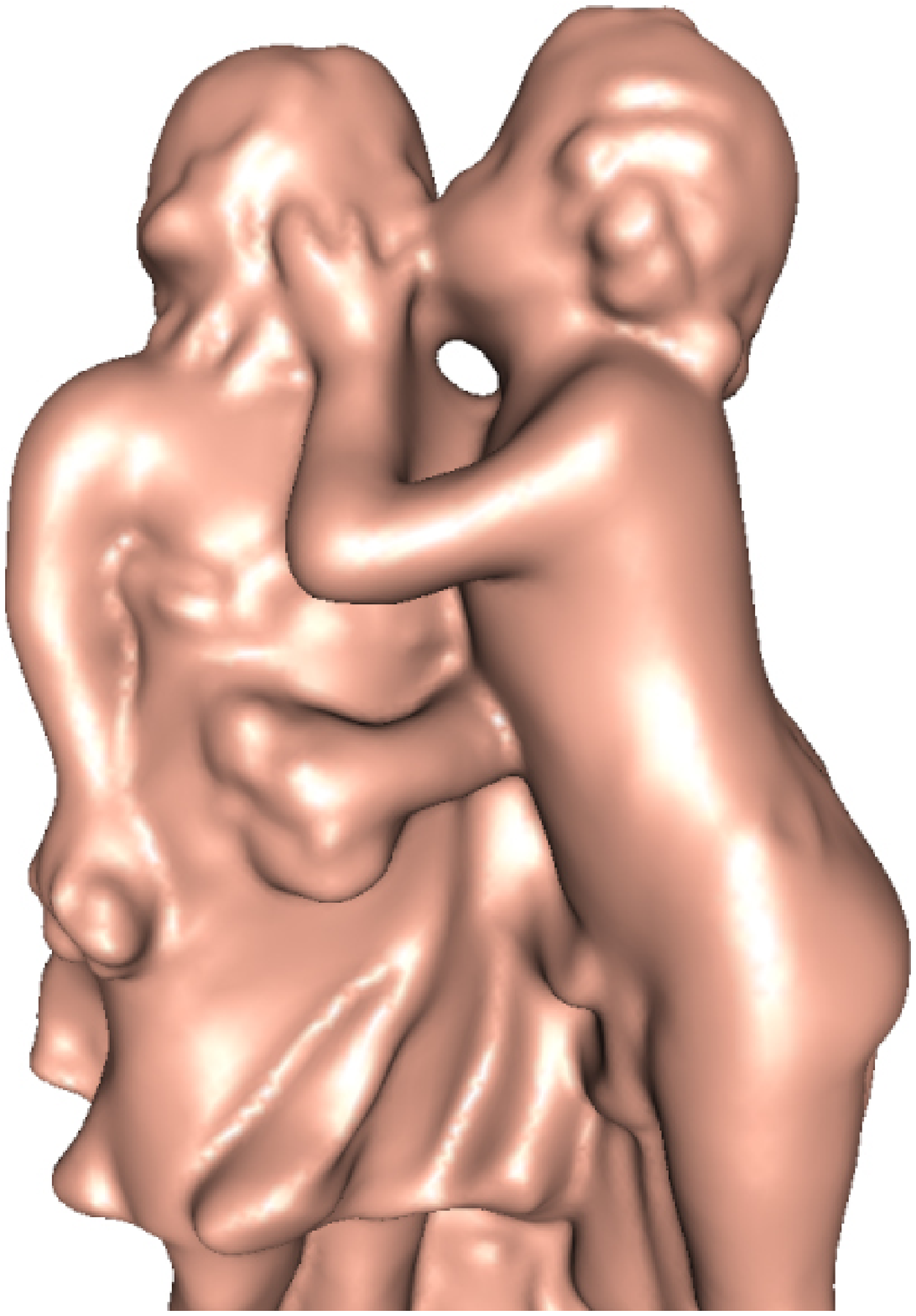}\\
\end{tabular}
\caption{Smooth surfaces are approximated by discrete Surfaces\label{fig:discrete_surface}}
\end{center}
\end{figure}

\begin{definition}[Triangular Mesh\index{triangular mesh}] Suppose $\Sigma$ is a two dimensional simplicial complex, furthermore it is also a manifold, namely, for each point $p$ of $\Sigma$, there exists a neighborhood of $p$, $U(p)$, which is homeomorphic to the whole plane or the upper half plane. Then $\Sigma$ is called a triangular mesh.

If $U(p)$ is homeomorphic to the whole plane, then $p$ is called an interior point; if $U(p)$ is homeomorphic to the upper half plane, then $p$ is called a boundary point.
\end{definition}

The fundamental concepts from smooth
differential geometry, such as Riemannian metric, curvature and
conformal structure, are generalized to the simplicial complex, respectively.

In the following discussion, we use $\Sigma=(V,E,F)$ to denote the mesh
with vertex set $V$, edge set $E$ and face set $F$. A discrete surface is with Euclidean (hyperbolic or spherical) background geometry if it is constructed by isometrically gluing triangles in $\mathbb{E}^2$ ($\mathbb{H}^2$ or $\mathbb{S}^2$).

\begin{definition}[Discrete Riemannian Metric\index{discrete Riemannian metric}]
A discrete metric on a triangular mesh is a function defined on the
edges, $l: E \to \mathbb{R}^+$, which satisfies the triangle inequality: on
each face $[v_i,v_j,v_k]$, $l_i,l_j,l_k$ are the lengths of edges against $v_i,v_j,v_k$ respectively,
\[
    l_i+l_j > l_k, ~ l_j+l_k > l_i,~ l_k+l_i > l_j.
\]
A triangular mesh with a discrete Riemannian metric is called a discrete metric surface.
\end{definition}

\begin{figure}[h!]
\begin{center}
\begin{tabular}{ccc}
\includegraphics[width=0.2\textwidth]{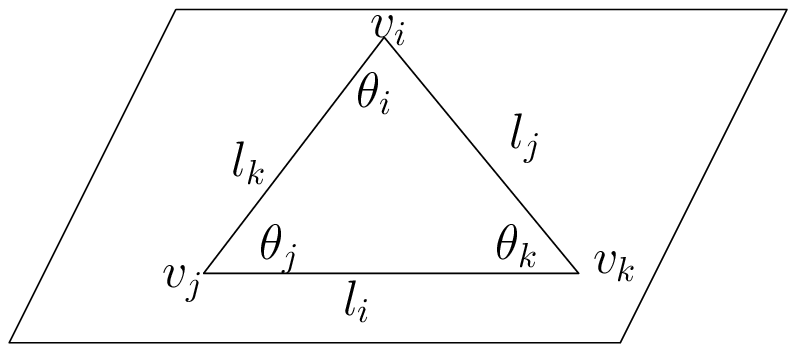}&
\includegraphics[width=0.2\textwidth]{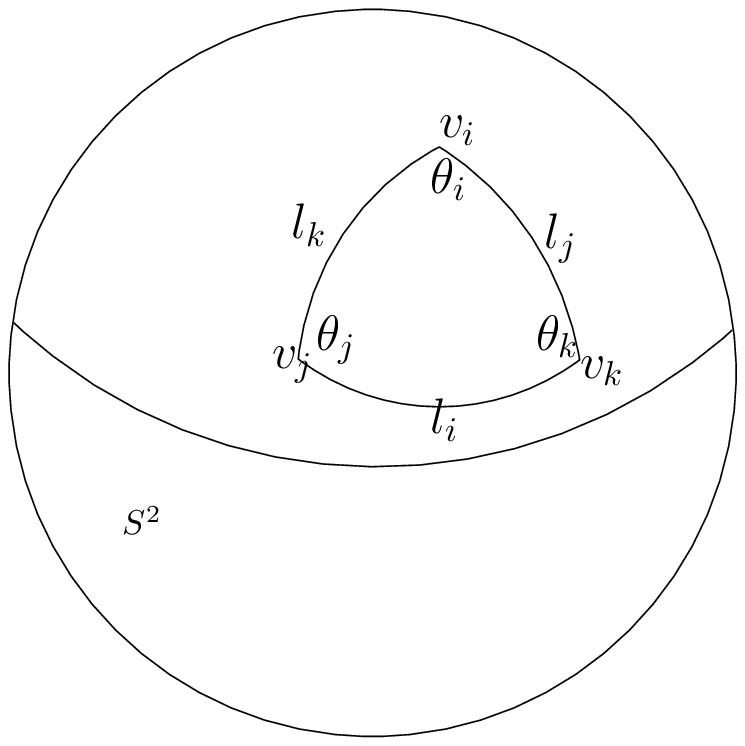}&
\includegraphics[width=0.2\textwidth]{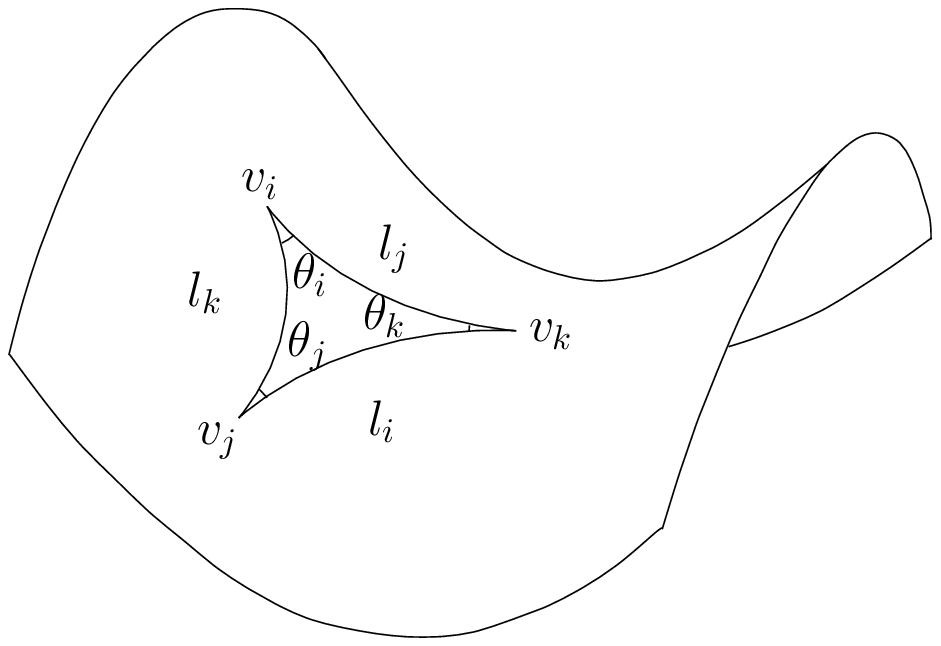}\\
$\mathbb{E}^2$ & $\mathbb{S}^2$ & $\mathbb{H}^2$\\
\end{tabular}
\caption{Different background geometry, Euclidean, spherical and hyperbolic.\label{fig:background_geometry}}
\end{center}
\end{figure}

\begin{definition}[Background Geometry]
Suppose $\Sigma$ is a discrete metric surface, if each face of $\Sigma$ is a spherical, ( Euclidean or hyperbolic ) triangle,
then we say $\Sigma$ is with spherical, (Euclidean or hyperbolic) background geometry.
We use $\mathbb{S}^2$, $\mathbb{E}^2$ and $\mathbb{H}^2$ to represent spherical Euclidean or hyperbolic background metric.
\end{definition}
Triangles with different background geometries satisify different cosine laws:
\[
    \begin{array}{rllr}
    1 &=& \frac{\cos\theta_i + \cos\theta_j \cos \theta_k}{\sin\theta_j \sin \theta_k} & \mathbb{E}^2 \\
    \cos l_i &=& \frac{\cos\theta_i + \cos\theta_j \cos \theta_k}{\sin\theta_j \sin \theta_k} & \mathbb{S}^2\\
    \cosh l_i &=& \frac{\cosh\theta_i + \cosh\theta_j \cosh \theta_k}{\sinh\theta_j \sinh \theta_k} & \mathbb{H}^2
    \end{array}
\]

The discrete Gaussian curvature is defined as angle deficit, as shown in Fig. \ref{fig:discrete_curvature}.

\begin{definition}[Discrete Gauss Curvature\index{discrete curvature}]
\label{def:discrete_curvature} The discrete Gauss curvature function on a
mesh is defined on vertices, $K: V \to \mathbb{R}$,
\[
K(v)= \left\{
\begin{array}{rl}
2\pi - \sum_{jk} \theta_i^{jk}, & v \not\in \partial M\\
 \pi - \sum_{jk} \theta_i^{jk}, & v \in \partial M\\
\end{array}
\right.,
\]
where $\theta_i^{jk}$'s are corner angle at $v_i$ in the face $[v_i,v_j,v_k]$, and $\partial M$ represents the boundary of the mesh.
\end{definition}

\begin{figure}[h!]
\begin{center}
\begin{tabular}{c}
\includegraphics[height=0.22\textwidth]{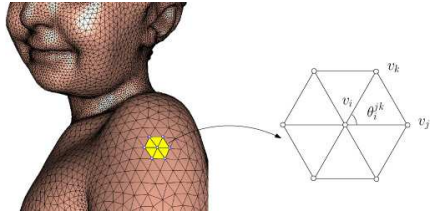}\\
\end{tabular}
\caption{Discrete curvatures of an interior vertex\label{fig:discrete_curvature}}
\end{center}
\end{figure}

The Gauss-Bonnet theorem still holds in the discrete case.

\begin{theorem}[Discrete Gauss-Bonnet Theorem]
Suppose $\Sigma$ is a triangular mesh with Euclidean background metric. The total curvature is a topological invariant,
\begin{equation}
    \sum_{v \not\in \partial \Sigma} K(v) + \sum_{v\in \partial \Sigma} K(v) + \epsilon A(\Sigma) =
    2\pi \chi(\Sigma),
    \label{eqn:Gauss_Bonnet}
\end{equation}
where $\chi$ is the characteristic Euler number, and $K$ is the Gauss curvature, $A(\Sigma)$ is the total area, $\epsilon=\{+1,0,-1\}$ if $\Sigma$ is with spherical, Euclidean or hyperbolic background geometry.
\label{thm:discrete_Gauss_Bonnet}
\end{theorem}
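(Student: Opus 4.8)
The plan is to derive \eqref{eqn:Gauss_Bonnet} from a single-triangle angle-sum law together with Euler's formula, so that the global curvature identity becomes a counting argument. First I would record the classical area formula for a geodesic triangle in each background geometry: the three inner angles of a triangle $f=[v_i,v_j,v_k]$ satisfy $\theta_i^{jk}+\theta_j^{ki}+\theta_k^{ij}=\pi+\epsilon A(f)$, where $\epsilon=+1,0,-1$ in the spherical, Euclidean and hyperbolic cases respectively. This is $\pi$ exactly in $\mathbb{E}^2$, Girard's spherical-excess formula in $\mathbb{S}^2$, and the hyperbolic angle-deficit formula in $\mathbb{H}^2$; each follows from the corresponding cosine law quoted above, or may be cited as classical.

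Next I would sum this identity over all faces. Summing the left-hand sides collects every corner angle of the mesh exactly once, while the right-hand sides add up to $\pi|F|+\epsilon A(\Sigma)$, since the face areas total $A(\Sigma)$. I would then recount the same corner-angle sum vertex by vertex. By Definition \ref{def:discrete_curvature}, the angles incident to an interior vertex $v$ sum to $2\pi-K(v)$ and those incident to a boundary vertex sum to $\pi-K(v)$; writing $V_i,V_b$ for the numbers of interior and boundary vertices, equating the two counts gives
\[
2\pi V_i+\pi V_b-\sum_{v}K(v)=\pi|F|+\epsilon A(\Sigma).
\]

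Finally I would convert the combinatorial coefficients into $\chi(\Sigma)$. Because each triangle has three sides, with every interior edge shared by two faces and every boundary edge by one, one has $3|F|=2E_i+E_b$, where $E_i,E_b$ denote the numbers of interior and boundary edges; and since the boundary is a disjoint union of edge-cycles, $E_b=V_b$. Substituting these into $\chi(\Sigma)=|V|-|E|+|F|$ yields $2\pi\chi(\Sigma)=2\pi V_i+\pi V_b-\pi|F|$, and combining with the displayed equation produces exactly \eqref{eqn:Gauss_Bonnet}.

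I expect the only substantive input to be the single-triangle area law in the spherical and hyperbolic cases; everything afterward is bookkeeping with the Euler relation. The step most prone to error is the boundary accounting — the $\pi$ (rather than $2\pi$) normalization at boundary vertices and the identity $E_b=V_b$ — since a slip there would change the additive constant and break the final matching.
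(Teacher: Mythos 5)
Your proof is correct: the single-triangle angle-sum law $\theta_i+\theta_j+\theta_k=\pi+\epsilon A(f)$, the double count of corner angles by faces and by vertices, and the combinatorial identities $3|F|=2E_i+E_b$ and $E_b=V_b$ do combine to give $2\pi\chi(\Sigma)=2\pi V_i+\pi V_b-\pi|F|$ and hence Eqn.~\eqref{eqn:Gauss_Bonnet}. The paper states Theorem~\ref{thm:discrete_Gauss_Bonnet} without any proof, so there is nothing to compare against; your argument is the standard one and is complete (modulo the implicit finiteness of the mesh, which the paper also assumes).
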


\subsection{Unified Circle Packing Metrics}

\begin{definition}[Circle Packing Metric\index{circle packing metric}]
Suppose $\Sigma=(V,E,F)$ is a triangle mesh with spherical,
Euclidean or hyperbolic background geometry. Each vertex $v_i$ is
associated with a circle with radius $\gamma_i$. The circle radius
function is denoted as $\gamma: V\to \mathbb{R}_{>0}$; a function
defined on the vertices $\epsilon: V\to \{+1,0,-1\}$ is called the
\emph{scheme coefficient}; a function defined on edges $\eta: E\to
\mathbb{R}$ is called the \emph{discrete conformal structure
coefficient}. A circle packing metric is a 4-tuple $(\Sigma, \gamma,
\eta, \epsilon)$, the edge length is determined by the 4-tuple and
the background geometry.
\end{definition}

In the smooth case, changing a Riemannian metric by a scalar
function, $\mathbf{g}\to e^{2u}\mathbf{g}$, is called a conformal
metric deformation. The discrete analogy to this is as follows.

\begin{definition}[Discrete Conformal Equivalence\index{discrete conformal equivalence}]
Two circle packing metrics $(\Sigma_k, \gamma_k,
\eta_k,\epsilon_k)$, $k=1,2$, are conformally equivalent if
$\Sigma_1=\Sigma_2$, $\eta_1=\eta_2$, $\epsilon_1=\epsilon_2$.
($\gamma_1$ may not equals to $\gamma_2$.)
\end{definition}

The discrete analogy to the concept of conformal factor in the smooth case is

\begin{definition}[Discrete Conformal Factor\index{discrete conformal factor}]
Discrete conformal factor for a circle packing metric $(\Sigma, \gamma,\eta,\epsilon)$ is a function defined
on each vertex $\mathbf{u} : V\to \mathbb{R}$,
\begin{equation}
    u_i =
    \left\{
    \begin{array}{lr}
    \log\gamma_i & \mathbb{E}^2\\
    \log\tanh \frac{\gamma_i}{2} & \mathbb{H}^2\\
    \log\tan \frac{\gamma_i}{2} & \mathbb{S}^2\\
    \end{array}
    \right.
    \label{eqn:discrete_conformal_factor}
\end{equation}
\end{definition}

\begin{figure}[t!]
\begin{center}
\begin{tabular}{ccc}
\includegraphics[width=0.2\textwidth]{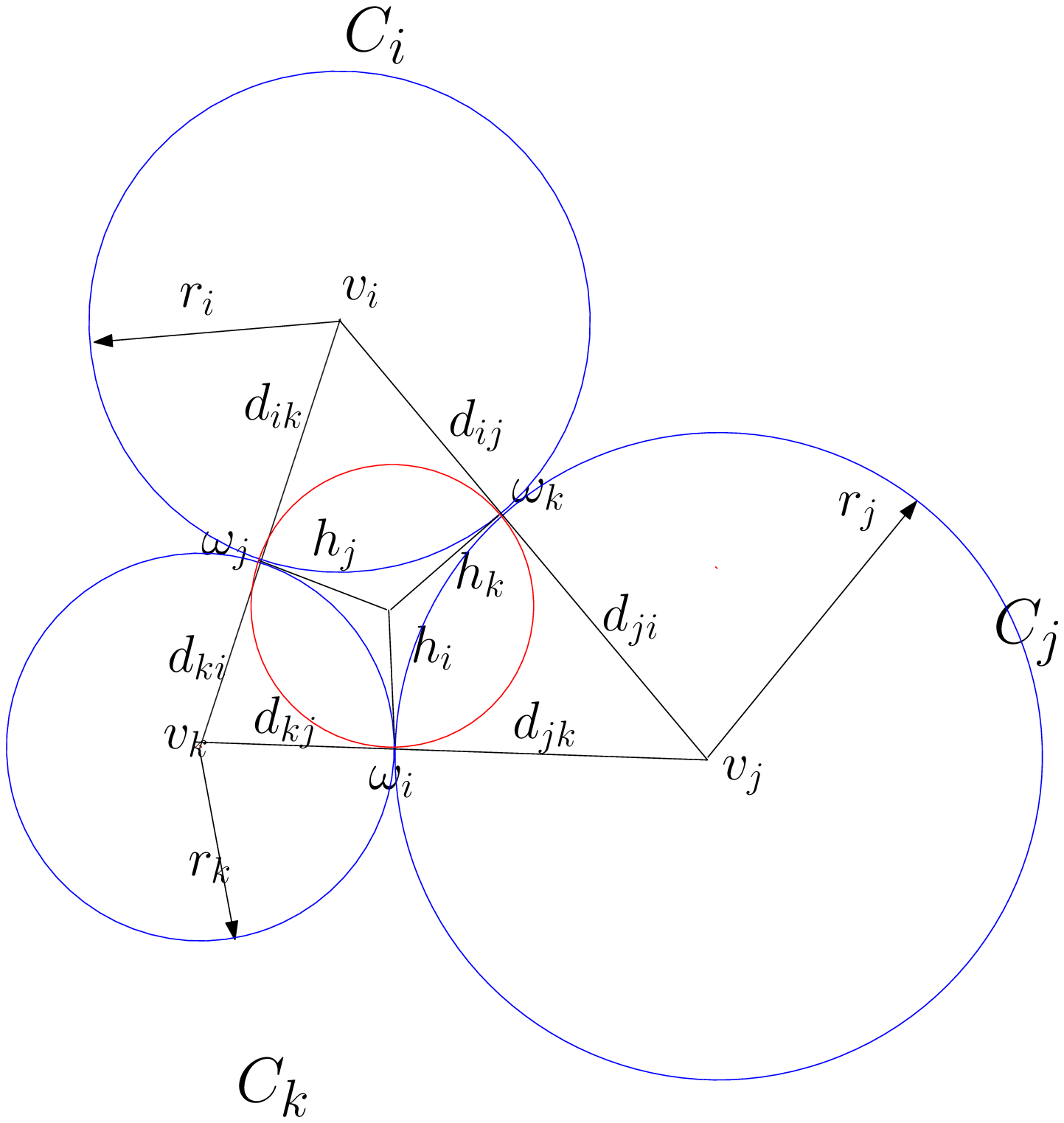}&
\includegraphics[width=0.2\textwidth]{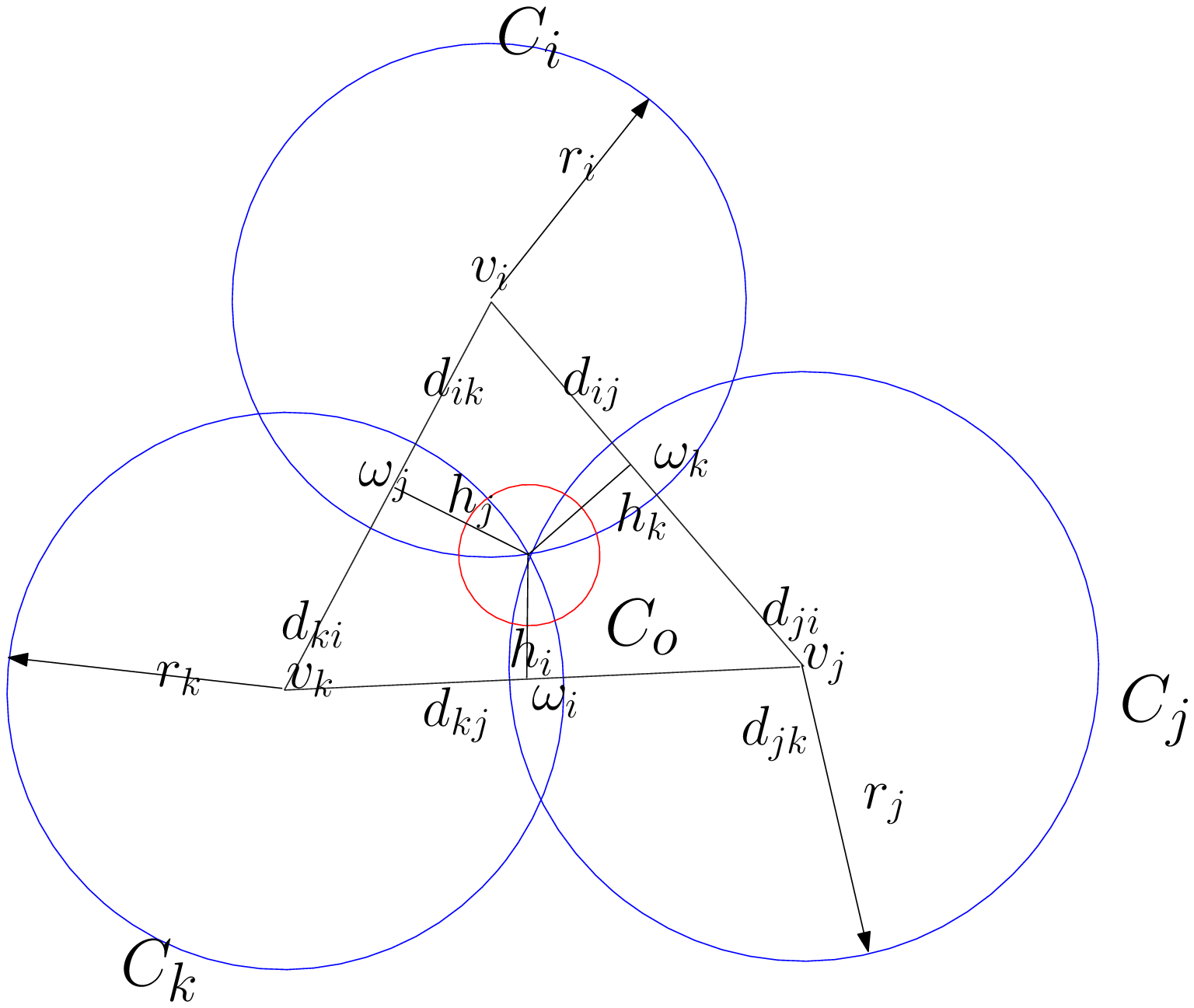}&
\includegraphics[width=0.22\textwidth]{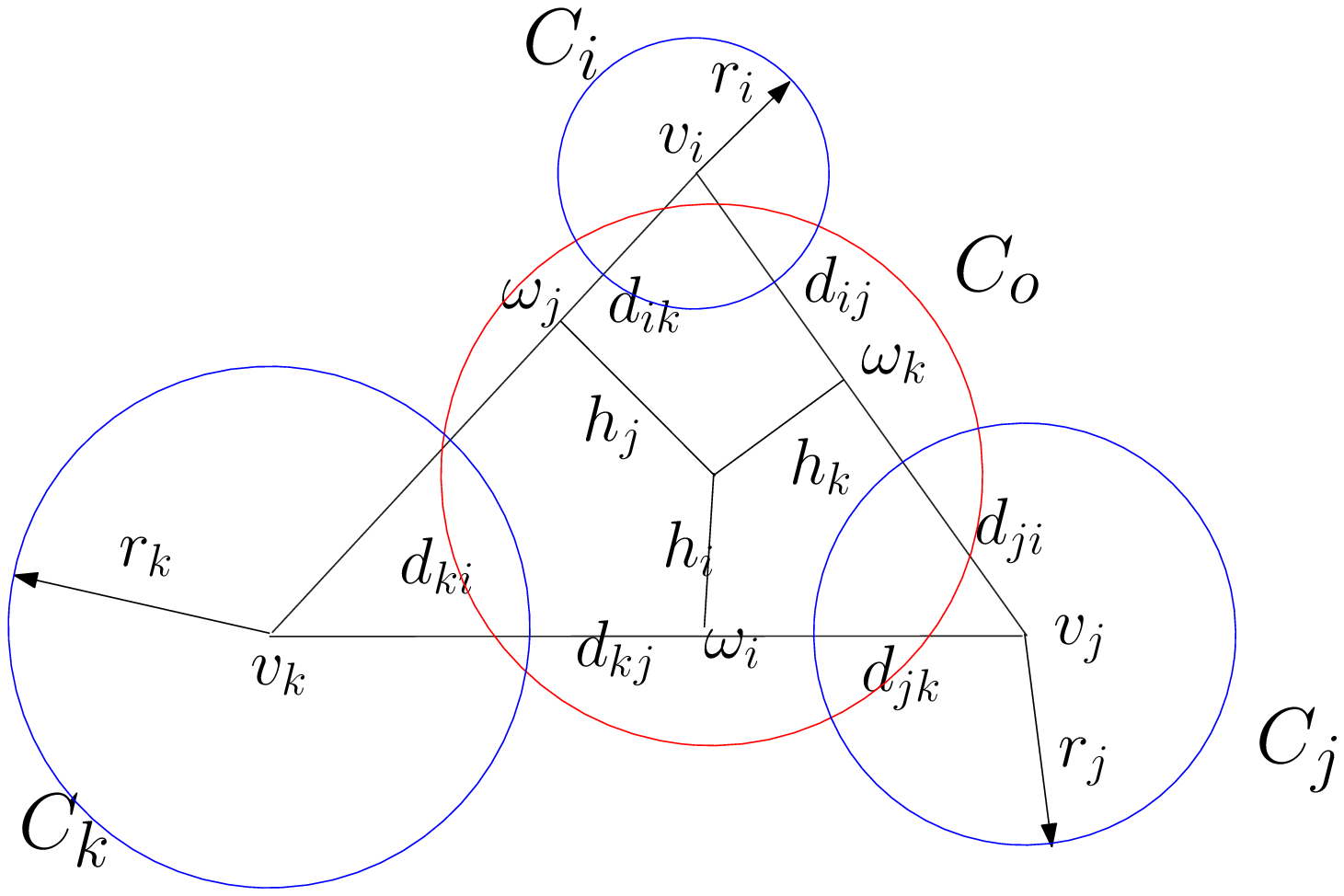}\\
(a) tangential CP & (b) Thurston's CP & (c) Inversive distance CP\\
\includegraphics[width=0.2\textwidth]{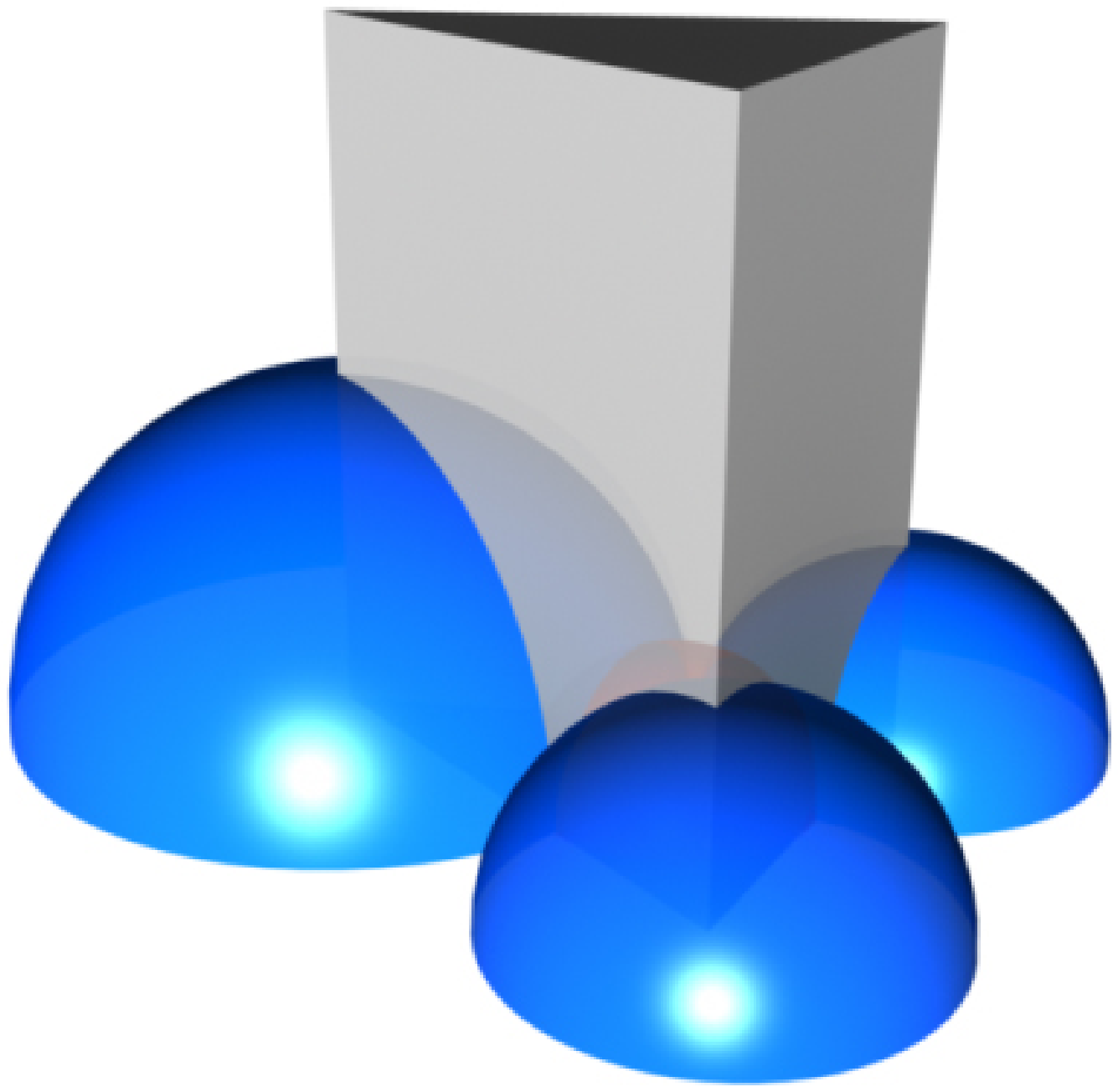}&
\includegraphics[width=0.2\textwidth]{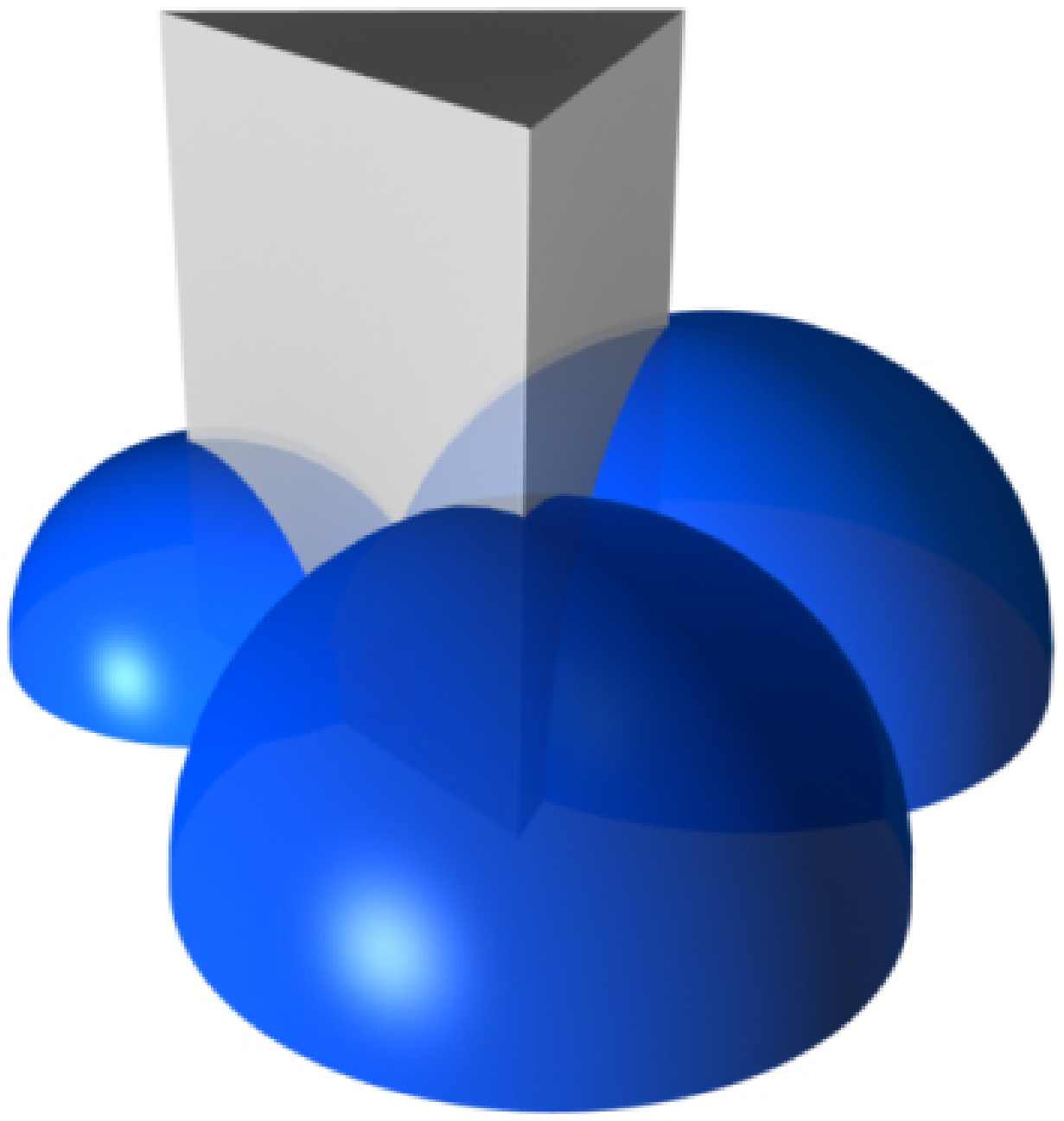}&
\includegraphics[width=0.2\textwidth]{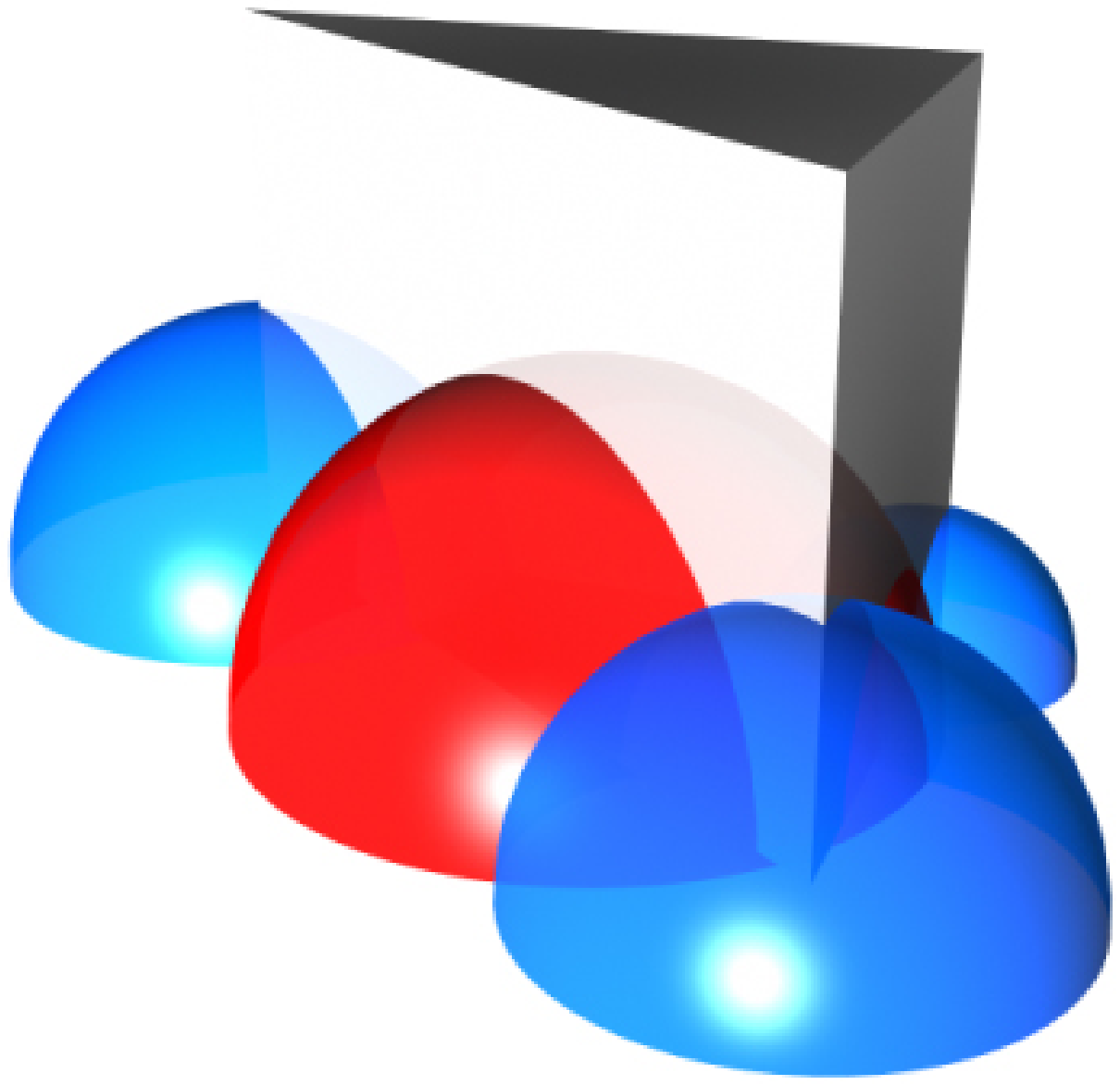}\\
$\eta=1,\epsilon=1$ & $ 0\le \eta \le 1,\epsilon=1$& $\eta\ge 1,\epsilon=1$ \\
\end{tabular}
\caption{Tangential circle packing, Thurston's circle packing and
inversive distance circle packing schemes, and the geometric
interpretations to their Ricci energies. \label{fig:schemes_1}}
\end{center}
\vspace{-0.5cm}
\end{figure}

\begin{figure}[t!]
\begin{center}
\begin{tabular}{ccc}
\includegraphics[width=0.2\textwidth]{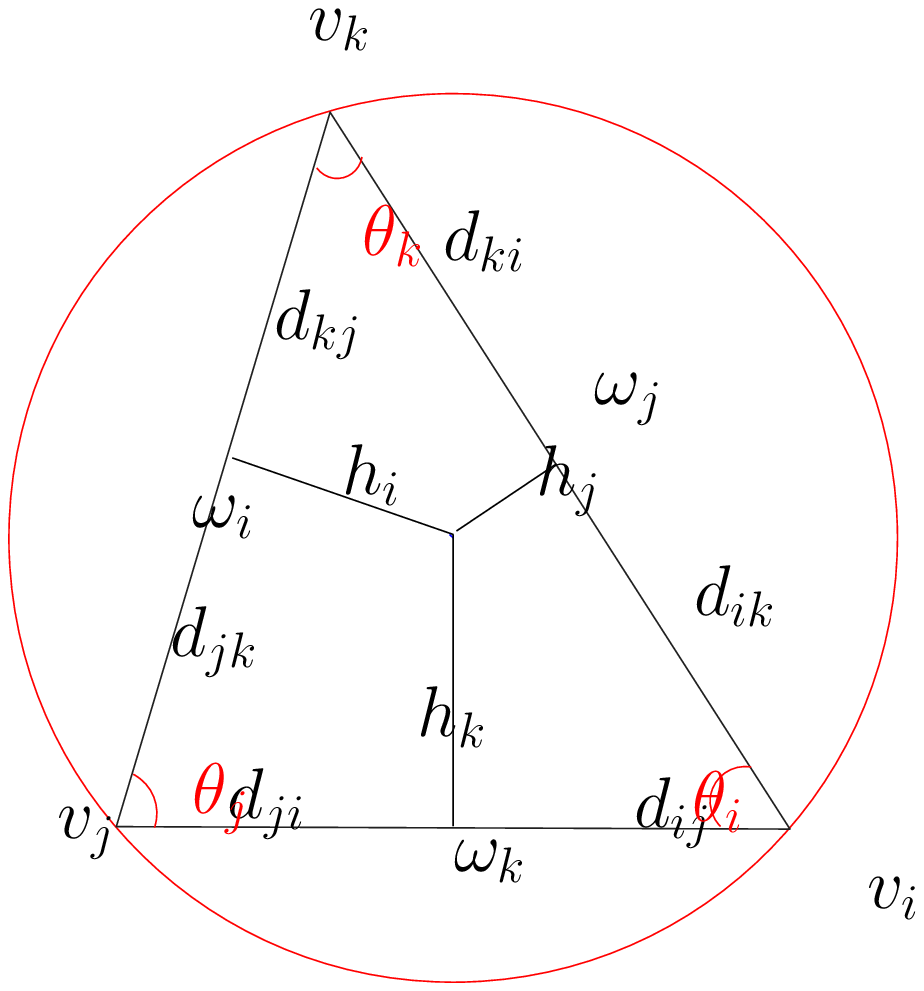}&
\includegraphics[width=0.23\textwidth]{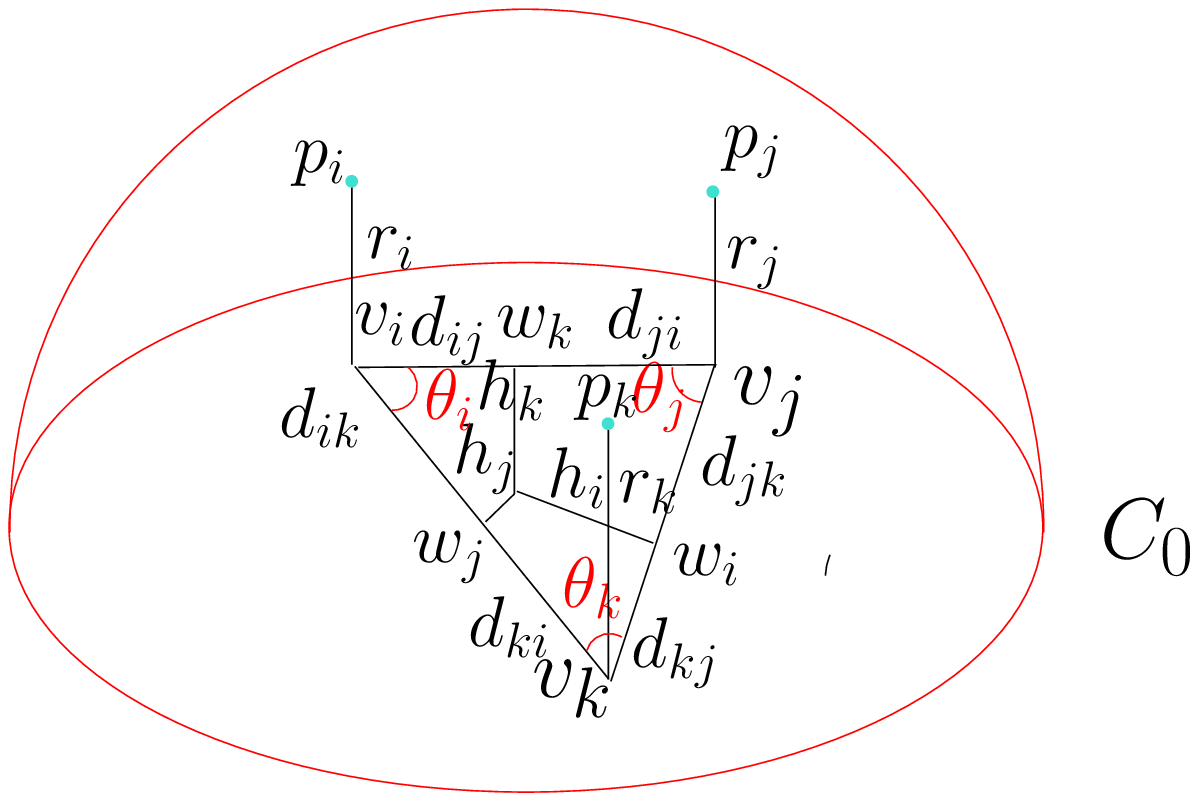}&
\includegraphics[width=0.23\textwidth]{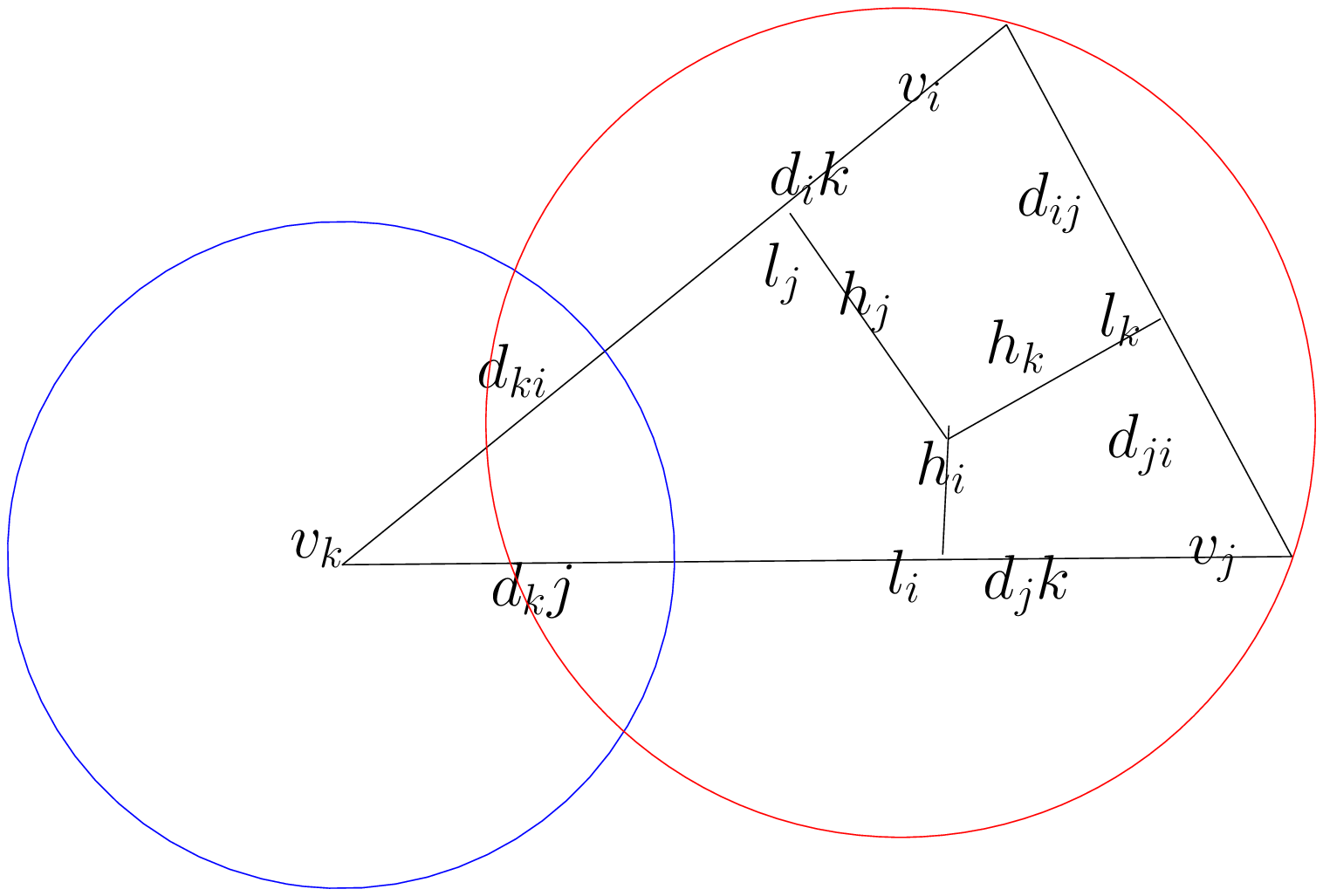}\\
(d) Yamabe flow & (e) virt.rad.cp & (f) mixed type\\
\includegraphics[width=0.2\textwidth]{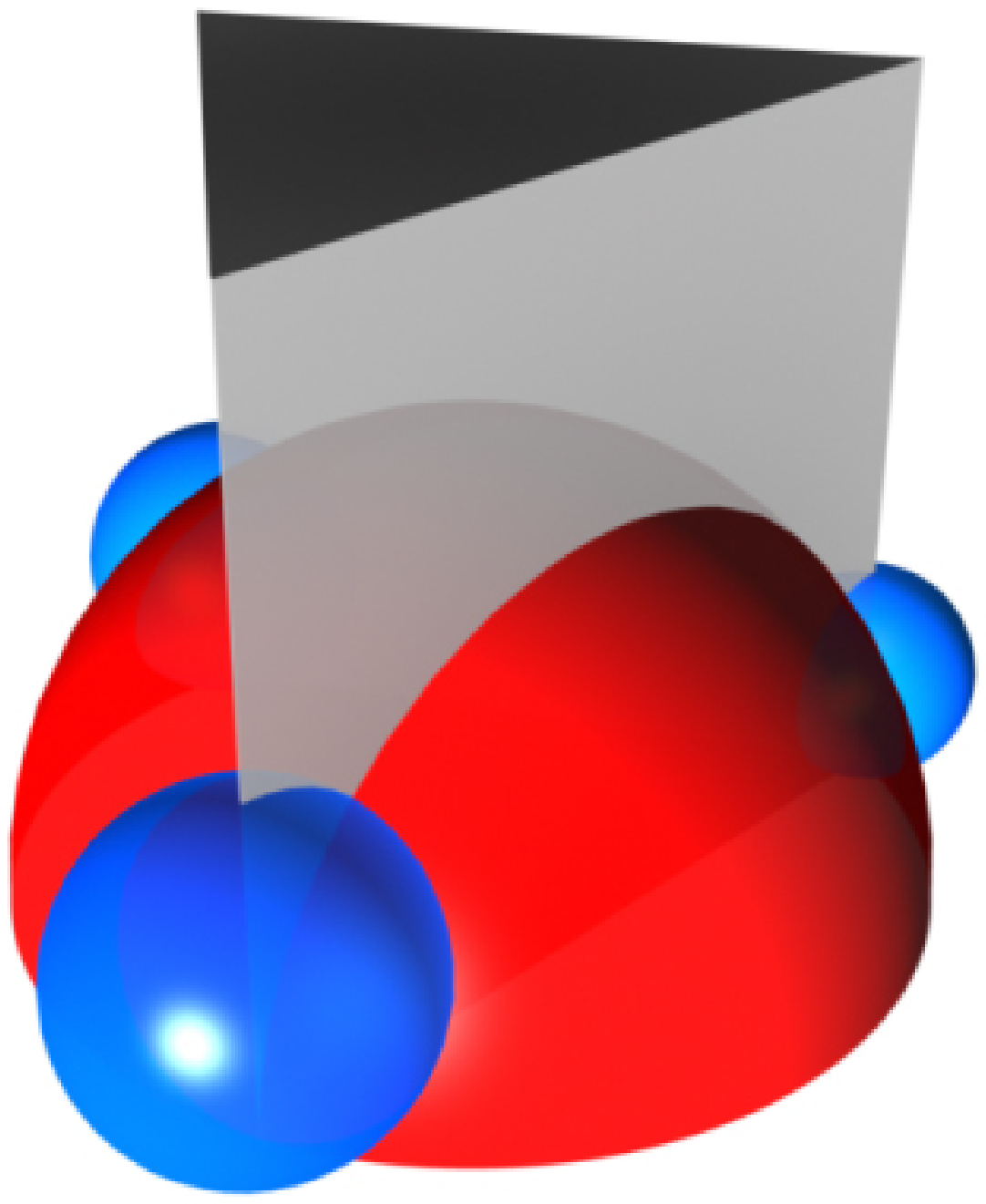}&
\includegraphics[width=0.2\textwidth]{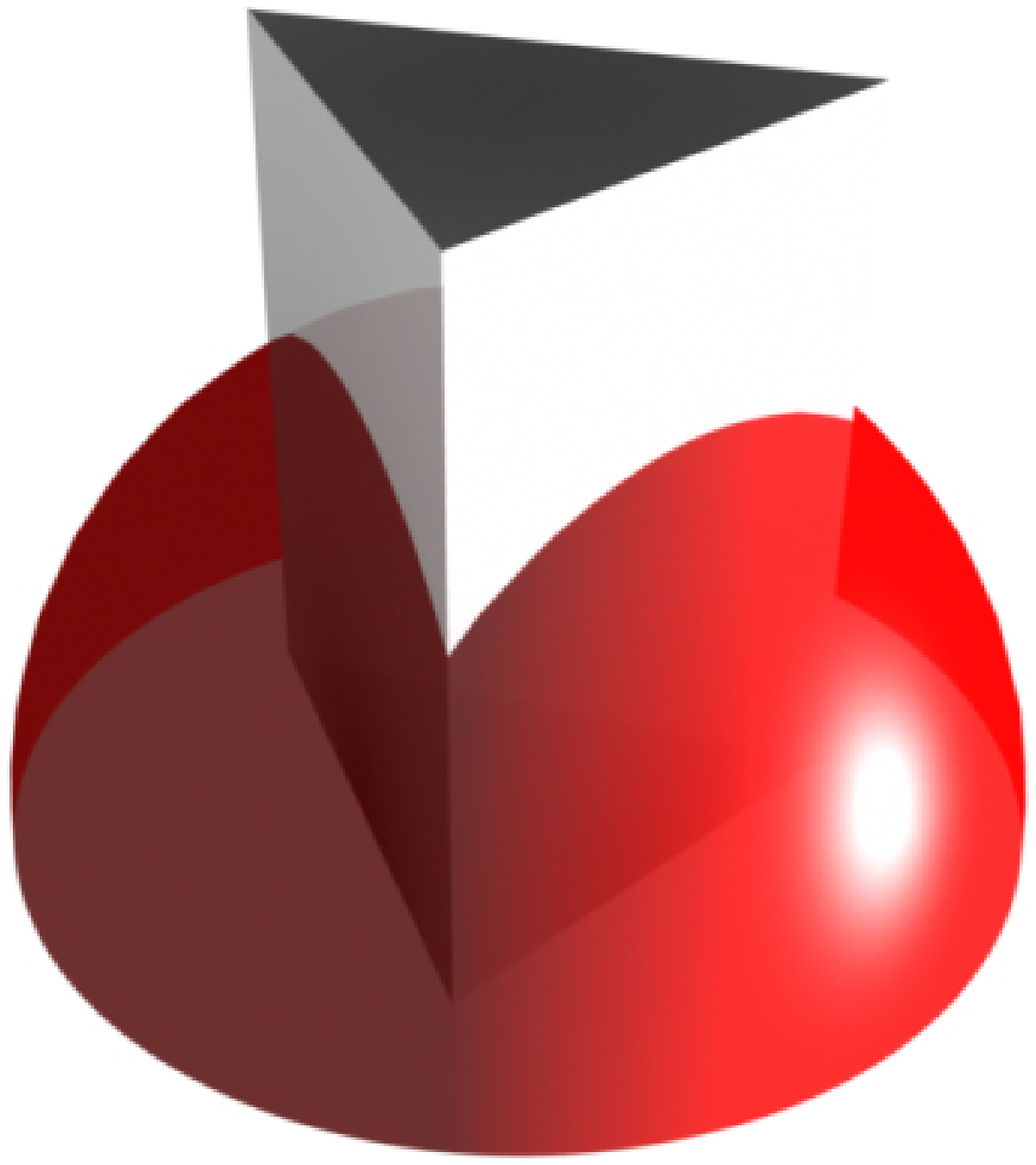}&
\includegraphics[width=0.2\textwidth]{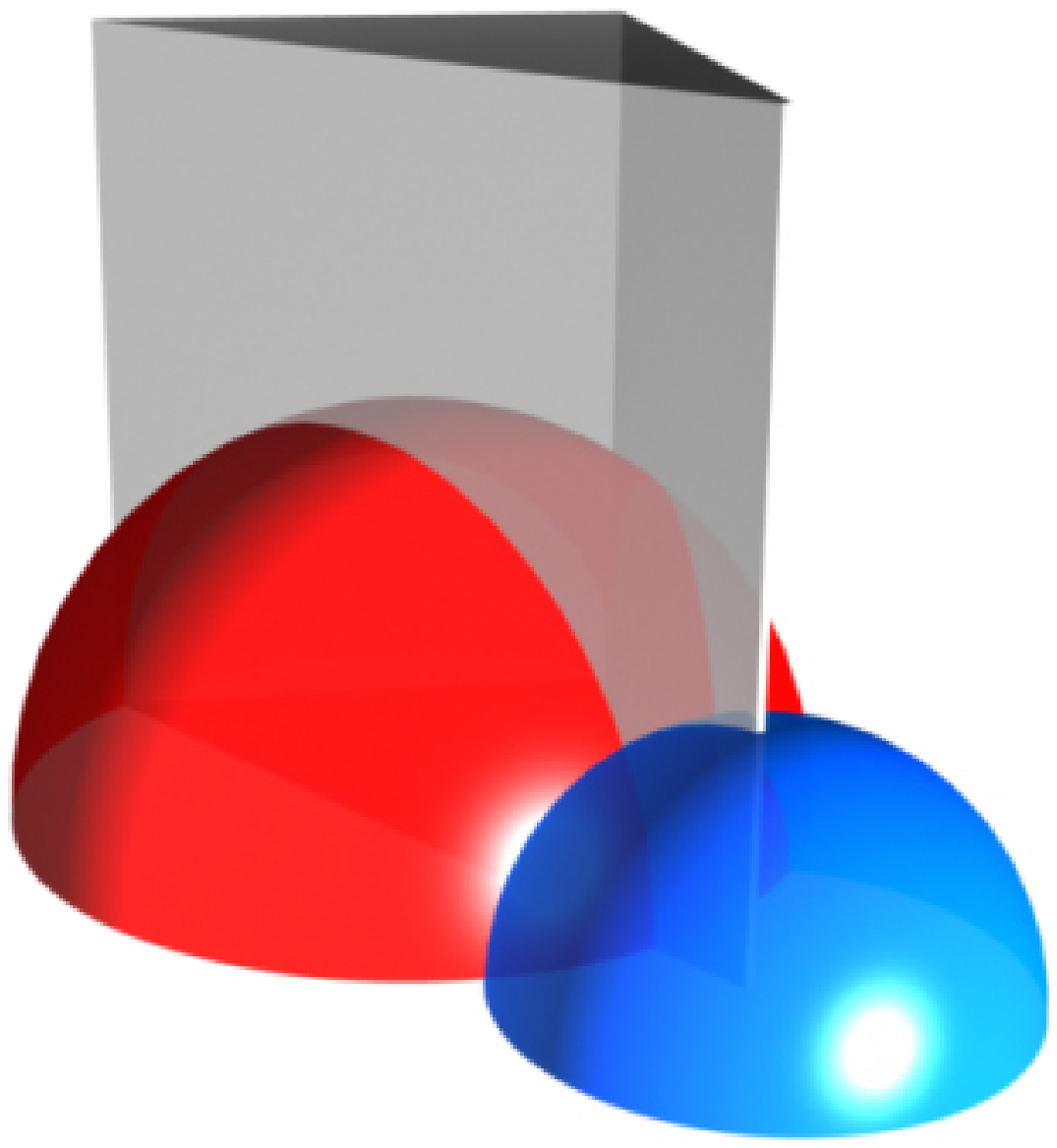}\\
$\eta>0,\epsilon=0$  & $\eta>0, \epsilon=-1$  & $\eta>0$\\
&&$\epsilon\in \{+1,0,-1\}$
\end{tabular}
\caption{Yamabe flow, virtual radius circle packing and mixed type
schemes, and the geometric interpretations to their Ricci energies.
\label{fig:schemes_2}}
\end{center}
\vspace{-0.5cm}
\end{figure}

\begin{definition}[Circle Packing Schemes]
Suppose $\Sigma=(V,E,F)$ is triangle mesh with spherical, Euclidean or hyperbolic background geometry. Given a circle packing metric $(\Sigma, \gamma, \eta, \epsilon)$, for an edge $[v_i,v_j]\in E$, its length $l_{ij}$ is given by
\begin{equation}
\left\{
\begin{array}{lclr}
l_{ij}^2 &=& 2\eta_{ij} e^{u_i+u_j} + \varepsilon_i e^{2u_i} + \varepsilon_j e^{2u_j}&\mathbb{E}^2\\
\cosh l_{ij} &=& \frac{4\eta_{ij}e^{u_i+u_j} + (1+\varepsilon_i e^{2u_i})(1+\varepsilon_j e^{2u_j})}{(1-\varepsilon_i e^{2u_i})(1-\varepsilon_j e^{2u_j})}&\mathbb{H}^2\\
\cos l_{ij} &=& \frac{-4\eta_{ij}e^{u_i+u_j} + (1-\varepsilon_i e^{2u_i})(1-\varepsilon_j e^{2u_j})}{(1+\varepsilon_i e^{2u_i})(1+\varepsilon_j e^{2u_j})}&\mathbb{S}^2\\
\end{array}
\right.
\label{eqn:unified_circle_packing_edge_lengths}
\end{equation}
The schemes are named as follows:
{\footnotesize \begin{center}
\begin{tabular}{|l|l|l|l|}
\hline
Scheme & $\varepsilon_i$ & $\varepsilon_j$ & $\eta_{ij}$\\
\hline
\hline
Tangential Circle Packing & +1 & +1 & +1 \\
\hline
Thurston's Circle Packing & +1 & +1 & $[0,1]$\\
\hline
Inversive Distance Circle Packing & +1 & +1 & $>0$ \\
\hline
Yamabe Flow & 0 & 0 & $>0$ \\
\hline
Virtual Radius Circle Packing & -1 & -1 & $>0$\\
\hline
Mixed type & $\{-1,0,+1\}$ & $\{-1,0,+1\}$ & $>0$\\
\hline
\end{tabular}
\end{center}}
\end{definition}

Fig.~\ref{fig:schemes_1} and Fig.~\ref{fig:schemes_2} illustrate all
the schemes with for discrete surfaces with Euclidean background
geometry.

\begin{remark}From the definition, the tangential circle packing is a special case of Thurston's circle packing; Thurston's circle packing is a special case of inversive distance circle packing. In the following discussion, we unify all three types as inversive distance circle packing.
\end{remark}

\subsection{Discrete Surface Ricci Flow}

\begin{definition}[Discrete Surface Ricci Flow] A discrete surface with $\mathbb{S}^2$, $\mathbb{E}^2$ or $\mathbb{H}^2$ background geometry, and a circle packing metric $(\Sigma,\gamma, \eta,\epsilon)$,
the discrete surface Ricci flow is
\begin{equation}
\frac{du_i(t)}{dt} = \bar{K}_i - K_i(t),
\label{eqn:discrete_ricci_flow}
\end{equation}
where $\bar{K}_i$ is the target curvature at the vertex $v_i$.
\end{definition}
The target curvature must satisfy certain constraints to ensure the existence of the solution to the flow, such as Gauss-Bonnet equation Eqn.~\ref{eqn:Gauss_Bonnet},  but also some additional ones described in \cite{Thurston97}, \cite{Marden_Rodin_1990} and \cite{chow_luo_03}, for instances.

The discrete surface Ricci flow has exactly the same formula as the
smooth counter part Eqn.~\ref{eqn:smooth_ricci_flow}. Furthermore,
similar to the smooth case, discrete surface Ricci flow is also
variational: the discrete Ricci flow is the negative gradient flow
of the discrete Ricci energy.

\begin{definition}[Discrete Ricci Energy]  A discrete surface with $\mathbb{S}^2$, $\mathbb{E}^2$ or $\mathbb{H}^2$ background geometry, and a circle packing metric $(\Sigma,\gamma, \eta,\epsilon)$. For a triangle $[v_i,v_j,v_k]$ with inner angles $(\theta_i,\theta_j,\theta_k)$, the discrete Ricci energy on the face is given by
\begin{equation}
 E_f( u_i,u_j, u_k) = \int^{(u_i,u_j,u_k)} \theta_i du_i + \theta_j du_j + \theta_k du_k.
 \label{eqn:face_ricci_energy}
\end{equation}
The discrete Ricci energy for the whole mesh is defined as
\begin{equation}
E_\Sigma(u_1,u_2,\cdots, u_n) = \int^{(u_1,u_2,\cdots, u_n)} \sum_{i=1}^n (\bar{K}_i-K_i) du_i.
 \label{eqn:mesh_ricci_energy}.
\end{equation}
\end{definition}
From definition, we get the relation between the surface Ricci energy and the face Ricci energy
\begin{equation}
    E_\Sigma = \sum_{i=1}^n (\bar{K}_i - 2\pi)u_i + \sum_{f\in F} E_f.
\label{eqn:relation}
\end{equation}
The description of the energy in terms of an integral requires the
fact that the inside is a closed form so that it is defined
independent of the integration path. This follows from the following
symmetry lemma, which has fundamental importance. In this work, we
give three proofs. The following one is algebraic, more difficult to
verify, but leads to computational algorithm directly. The second
one is based on the geometric interpretation to the Hessian matrix
in Section \ref{sec:Hessian}. The third one is based on the
geometric interpretation to the discrete Ricci energy. The later two
proofs are more geometric and intuitive.

\begin{lemma}[Symmetry]  A discrete surface with $\mathbb{S}^2$, $\mathbb{E}^2$ or $\mathbb{H}^2$ background geometry, and a circle packing metric $(\Sigma,\gamma, \eta,\epsilon)$, then for any pair of vertices $v_i$ and $v_j$:
\begin{equation}
    \frac{\partial K_i}{\partial u_j } = \frac{\partial K_j}{\partial u_i }.
    \label{eqn:symmetry}
\end{equation}
\label{lem:symmetry}
\end{lemma}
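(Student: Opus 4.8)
The plan is to reduce the global identity to a single triangle and then verify it by differentiating through the intermediate edge-length variables. Since the curvature $K_i = c_i - \sum_{jk}\theta_i^{jk}$, with $c_i \in \{2\pi,\pi\}$ a constant, is a sum of corner angles, and the conformal factor $u_j$ enters only those faces incident to $v_j$, the mixed derivative $\partial K_i/\partial u_j$ collects contributions exactly from the one or two triangles containing the edge $[v_i,v_j]$. Hence it suffices to prove the local identity $\partial \theta_i/\partial u_j = \partial \theta_j/\partial u_i$ inside a single face $[v_i,v_j,v_k]$; summing over the shared faces then yields Eqn.~\eqref{eqn:symmetry}.

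Within one triangle I would factor the Jacobian through the edge lengths, $\frac{\partial \theta_i}{\partial u_j} = \sum_{a}\frac{\partial \theta_i}{\partial l_a}\frac{\partial l_a}{\partial u_j}$, so that the two geometric inputs are cleanly separated. The first input is the derivative cosine law obtained from the cosine laws of Section~\ref{sec:unified_discrete_surface_ricci_flow}: in the Euclidean case one finds $\frac{\partial \theta_i}{\partial l_a} = \frac{1}{2A}\, l_i\, M_{ia}$, where $M_{ii}=1$ and the off-diagonal entry $M_{ij} = -\cos\theta_k$ involves the remaining angle, with the hyperbolic and spherical cases giving the analogous expression in terms of $\sinh l$, $\sin l$ and the appropriate area-type normalizer. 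The decisive structural feature is that $M$ is \emph{symmetric} in all three background geometries. The second input comes from differentiating Eqn.~\eqref{eqn:unified_circle_packing_edge_lengths}: in the Euclidean case $\tfrac12\,\partial_{u_i} l_{ij}^2 = \eta_{ij}e^{u_i+u_j} + \varepsilon_i e^{2u_i}$ splits into a part $\eta_{ij}e^{u_i+u_j}$ that is symmetric in $i,j$ plus a single-index part $\varepsilon_i e^{2u_i}$ carried by the scheme coefficient $\varepsilon$, and the hyperbolic and spherical formulas for $\cosh l_{ij}$ and $\cos l_{ij}$ admit the same $(\eta,\varepsilon)$ splitting.

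I would then substitute both inputs and show that $\frac{\partial \theta_i}{\partial u_j} - \frac{\partial \theta_j}{\partial u_i}$ vanishes, using the standard triangle identities—the projection formula $l_k = l_i\cos\theta_j + l_j\cos\theta_i$ together with the law of cosines—to rewrite the cross terms. The asymmetric prefactors $l_i$ and $l_j$ contributed by the derivative cosine law are reconciled against the asymmetric single-index edge-length derivatives precisely through these identities, while the symmetric block carried by $\eta_{ij}e^{u_i+u_j}$ cancels on its own. The symmetry of $M$ is what guarantees there is nothing left over after these substitutions.

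The main obstacle I expect is making the verification uniform rather than geometry-by-geometry: the hyperbolic and spherical cosine laws and the corresponding formulas in Eqn.~\eqref{eqn:unified_circle_packing_edge_lengths} are considerably heavier than the Euclidean ones, so a naive case split triples the bookkeeping. The point of the unified framework is that the three edge-length formulas are engineered to share the same $(\eta,\varepsilon)$ structure, so I would carry the symmetric/single-index decomposition abstractly—treating the area factor $A$ and its hyperbolic and spherical analogues as a formal positive normalizer—and specialize to a particular geometry only at the very end. This is the step that is, as the authors note, ``more difficult to verify,'' but it is also the one that produces the explicit Jacobian entries later needed for the Hessian and for the computational algorithm.
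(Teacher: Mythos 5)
Your proposal is correct and follows essentially the same route as the paper's algebraic proof: reduce to a single face, then compute $\partial\theta_i/\partial u_j$ by the chain rule through the edge lengths, which is exactly the paper's factorization $-\frac{1}{2A}L\Theta L^{-1}D$ (your symmetric matrix $M$ is the paper's $\Theta$ up to sign, and your single-index/symmetric splitting of $\partial l/\partial u$ is encoded in the entries $\tau(i,j,k)$ of $D$). The only difference is presentational: the paper defers the final cancellation to ``symbolic computation,'' while you sketch how to see it by hand via the projection formula and the symmetry of $M$.
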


\begin{proof} From the relation in Eqn. \ref{eqn:relation}, it is sufficient and necessary to show the symmetry for each triangle $[v_i,v_j,v_k]$ for all schemes,
\[
    \frac{\partial \theta_i}{\partial u_j} = \frac{\partial \theta_j}{\partial u_i}.
\]
This is proven by finding the explicit formula for the Hessain matrix of the face Ricci energy,
\begin{equation}
\frac{\partial(\theta_i,\theta_j,\theta_k)}{\partial(u_i,u_j,u_k)} = -\frac{1}{2A} L\Theta L^{-1} D,
    \label{eqn:Hessain}
\end{equation}
where
\begin{equation}
    A = \frac{1}{2}\sin\theta_i s(l_j) s(l_k)
\end{equation}
the matrix $L$
\begin{equation}
    L = \left(
    \begin{array}{ccc}
    s(l_i)&0&0\\
    0 & s(l_j)& 0 \\
    0 & 0 & s(l_k)
    \end{array}
    \right)
\end{equation}
and the matrix $\Theta$
\begin{equation}
    \Theta = \left(
    \begin{array}{ccc}
    -1&\cos \theta_k & \cos \theta_j\\
    \cos\theta_k &-1 & \cos\theta_i\\
    \cos\theta_j & \cos\theta_i &-1
    \end{array}
    \right)
\end{equation}
and
\begin{equation}
    D = \left(
    \begin{array}{ccc}
    0&\tau(i,j,k) & \tau(i,k,j)\\
    \tau(j,i,k) &0 & \tau(j,k,i)\\
    \tau(k,i,j) & \tau(k,j,i) &0
    \end{array}
    \right)
\end{equation}
where $s(x)$ and $\tau(i,j,k)$ are defined as
\begin{center}
\begin{tabular}{|l|l|l|}
\hline
& $s(x)$ & $\tau(i,j,k)$ \\
\hline
\hline
$\mathbb{E}^2$ & $x$ & $1/2(l_i^2 + \varepsilon_j r_j^2 - \varepsilon_k r_k^2)$\\
\hline
$\mathbb{H}^2$ & $\sinh x$ & $\cosh l_i \cosh^{\varepsilon_j} r_j - \cosh^{\varepsilon_k}r_k$ \\
\hline
$\mathbb{S}^2$ & $\sin x$ & $\cos l_i \cos^{\varepsilon_j} r_j - \cos^{\varepsilon_k}r_k$ \\
\hline
\end{tabular}
\end{center}
By symbolic computation, it is straightforward to verify the symmetry of Eqn. \ref{eqn:Hessain}.
\vspace{-0.3cm}
\end{proof}

\section{Geometric interpretation to Hessian}
\label{sec:Hessian}

This section focuses on the geometric interpretation to Hessian
matrix of the discrete Ricci energy on each face for $\mathbb{E}^2,
\mathbb{H}^2$ and $\mathbb{S}^2$ cases. This gives the second proof
of the symmetry lemma \ref{lem:symmetry}.

\subsection{Euclidean Case}

The interpretation in Euclidean case is due to
Glickenstein~\cite{Glickenstein_2011} (Z. He~\cite{ZHe99} in the
case of circle packings) and illustrated in \cite{ricci2013}. In the
current work, we build the connection to the Power Delaunay
triangulation and power voronoi diagram.

We only focus on one triangle $[v_i,v_j,v_k]$, with corner angles $\theta_i,\theta_j,\theta_k$, conformal factors $u_i,u_j,u_k$ and edge lengths $l_{ij}$ for edge $[v_i,v_j]$, $l_{jk}$ for $[v_j,v_k]$ and $l_{ki}$ for $[v_k,v_i]$.

\paragraph{Power Delaunay Triangulation}

As shown in Fig. \ref{fig:schemes_1} and Fig.~\ref{fig:schemes_2},
the \emph{power} of $q$ with respect to $v_i$ is
\[
    pow( v_i, q) = | v_i - q |^2 - \epsilon \gamma_i^2.
\]
The \emph{power center} $o$ of the triangle satisifies
\[
    pow(v_i,o) = pow(v_j,o) = pow(v_k,o).
\]
The \emph{power circle} $C$ centered at $o$ with radius $\gamma$, where $\gamma = pow(v_i,o)$.

Therefore, for tangential, Thurton's and inversive distance circle packing cases, the power circle is orthogonal to  three circles at the vertices $C_i$, $C_j$ and $C_k$; for Yamabe flow case, the power circle is the circumcircle of the triangle; for virtual radius circle packing, the power circle is the equator of the sphere, which goes through three points $\{v_i + \gamma_i^2 \mathbf{n}, v_j + \gamma_j^2 \mathbf{n}, v_k+\gamma_k^2\mathbf{n}\}$, where $\mathbf{n}$ is the normal to the plane.

Through the power center, we draw line perpendicular to three edges, the perpendicular feets are $w_i,w_j$ and $w_k$ respectively. The distance from the power center to the perpendicular feet are $h_i,h_j$ and $h_k$ respectively. Then it can be shown easily that
\begin{equation}
    \frac{\partial \theta_i}{\partial u_j} = \frac{\partial \theta_j}{\partial u_i} = \frac{h_k}{l_k},
    \frac{\partial \theta_j}{\partial u_k} = \frac{\partial \theta_k}{\partial u_j} = \frac{h_i}{l_i},
    \frac{\partial \theta_k}{\partial u_i} = \frac{\partial \theta_i}{\partial u_k} = \frac{h_j}{l_j},
    \label{eqn:geometric_hessian_1}
\end{equation}
furthermore,
\begin{equation}
\frac{\partial \theta_i}{\partial u_i} = -\frac{h_k}{l_k} - \frac{h_j}{l_j},
\frac{\partial \theta_j}{\partial u_j} = -\frac{h_k}{l_k} - \frac{h_i}{l_i},
\frac{\partial \theta_k}{\partial u_k} = -\frac{h_i}{l_i} - \frac{h_j}{l_j}.
    \label{eqn:geometric_hessian_2}
\end{equation}
These two formula induces the formula for the Hessian of the Ricci energy of the whole surface. One can treat the circle packing $(\Sigma, \gamma, \eta, \varepsilon)$  as a power triangulation, which has a dual power diagram $\bar{\Sigma}$. Each edge $e_{ij}\in \Sigma$ has a dual edge $\bar{e}\in \bar{\Sigma}$, then
\begin{equation}
\frac{\partial K_i}{\partial u_j} = \frac{\partial K_j}{\partial
u_i} = \frac{|\bar{e}_{ij}|}{|e_{ij}|},
\label{eqn:Euclidean_Hessian_1}
\end{equation}
and
\begin{equation}
\frac{\partial K_i}{\partial u_i} = -\sum_j \frac{\partial
K_i}{\partial u_j}. \label{eqn:Euclidean_Hessian_2}
\end{equation}
This gives a geometric proof for the symmetry lemma
\ref{lem:symmetry} in Euclidean case.

Suppose on the edge $[v_i,v_j]$, the distance from $v_i$ to the perpendicular foot $w_k$ is $d_{ij}$, the distance from $v_j$ to $w_k$ is $d_{ji}$, then $l_{ij} = d_{ij}+d_{ji}$, and
\[
    \frac{\partial l_{ij}}{\partial u_i} = d_{ij},     \frac{\partial l_{ij}}{\partial u_j} = d_{ji},
\]
furthermore
\[
    d_{ij}^2+d_{jk}^2+d_{ki}^2 = d_{ik}^2+d_{kj}^2+ d_{ji}^2.
\]
This shows the power circle interpretation is equivalent to Glikenstain's formulation.

\if 0
\paragraph{Rigidity}

\begin{lemma}[Local Rigidity]
Suppose a discrete surface is with $\mathbb{E}^2$ background geometry. Given a circle packing metric  $(\Sigma, \gamma, \eta, \epsilon)$, such that $\Sigma$ is a (power) Delaunay triangulation (the power distance is determined by the schemes) with respect to the circle packing metric, then in a neighborhood of the metric, the discrete Ricci energies are convex. The convexity implies the local rigidity, namely the curvature mapping
\begin{equation}
    \nabla E_\Sigma: (u_1,u_2, \cdots, u_n) \to (K_1,K_2,\cdots, K_n)
    \label{eqn:curvature_map}
\end{equation}
is locally injective in the neighborhood of the metric.
\end{lemma}
\begin{proof} From Eqn.~\ref{eqn:geometric_hessian_1} and Eqn.~\ref{eqn:geometric_hessian_2}, and the power Delaunay assumption, the Hessian matrix is positive definite on the linear space $\{(u_i,u_j,u_k)| u_i + u_j + u_k = 0\}$. Therefore the Ricci energy on each face Eqn.~\ref{eqn:face_ricci_energy} is convex on the linear space. By the Eqn.~\ref{eqn:relation}, the Ricci energy on the whole surface is convex on the linear space $\{(u_1,\cdots,u_n)| \sum_i u_i = 0\}$. Therefore local rigidity holds.
\end{proof}

The global rigidity is more challenging. The followings are the known results

For Tangential circle packing, Thurston's circle packing scheme with both $\mathbb{E}^2$ and $\mathbb{H}^2$ background geometry, the admissible spaces of discrete conformal factors are also convex, which implies the global rigidity, namely the curvature mapping \ref{eqn:curvature_map} is a global diffeomorphism. The admissible curvature space is a convex polytope, constrained by a set of linear inequalities.

For Yamabe flow scheme with both $\mathbb{E}^2$ and $\mathbb{H}^2$ background geometry, if $\Sigma$ is preserved to be Delaunay during the flow, the admissible curvature space is the convex polytope
\[
    \{(K_1,K_2,\cdots, K_n)| \sum_i K_i = 2\pi \chi(\Sigma), K_j < 2\pi, 1\le j \le n\}.
\]
then the curvature mapping \ref{eqn:curvature_map} is a global diffeomorphism \cite{Gu_Guo_Luo_Sun_Wu_14},
\[
\nabla E_\Sigma: \mathbb{R}^n \bigcap \{\sum_{i=1}^n u_i = 0 \} \to (-\infty, 2\pi)^n\bigcap \{\sum_{i=1}^n K_i = 2\pi \chi(\Sigma)\}.
\]

Discrete Ricci flow for surfaces with spherical background geometry in general is not convex, which causes numerical instability. Therefore, in practice, instead of mapping the surface onto the unit sphere, one can map the surface onto the Euclidean plane first using Euclidean Ricci flow, then map the plane to the sphere by the stereo-graphic projection.
\fi




\subsection{Hyperbolic Case}
Let $\triangle_{123}$ be a hyperbolic triangle whose vertices are labeled by $1,2,3.$ Let $r_1,r_2,r_3$ be three positive numbers associated to the vertices, and $\epsilon_1,\epsilon_2,\epsilon_3\in \{-1,0,1\}$ be indicators of the type of the vertices.

For the mixed type of discrete conformal geometry, the edge length
of $\triangle_{123}$ is given by
{\[
\cosh l_k=4\eta_{ij} \frac{\sinh r_i}{(1-\epsilon_i)\cosh
r_i+1+\epsilon_i}  \frac{\sinh r_j}{(1-\epsilon_j)\cosh
r_j+1+\epsilon_j}
+ \cosh^{\epsilon_i} r_i \cosh^{\epsilon_j} r_j,
\]}

 where $\{i,j,k\}={1,2,3}.$

Via the cosine law, the edge lengths $l_1,l_2,l_3$ determine the angles $\theta_1,\theta_2,\theta_3$.

When $\epsilon_1=\epsilon_2=\epsilon_3=0$, this is the case of Yamabe flow. There is a circle passing through the three vertices of $\triangle_{123}$. It is still called the {\it power circle}.

When $\epsilon_1=\epsilon_2=\epsilon_3=1$, this is the case of inversive distance circle packing. Centered at each vertex $i$, there is a circle with radius $r_i$. Then there is the {\it power circle} orthogonal to the three circles centered at the vertices.

When $\epsilon_1=\epsilon_2=\epsilon_3=-1$, this is the case of virtual radius circle packing. Let $\triangle_{123}$ be on the equator plane of the ball model of the hyperbolic space $\mathbb{H}^3$. For each vertex $i$, let $ii'$ be the geodesic arc perpendicular to the equator plane with length $r_i$. Assume $1',2',3'$ are above the equator plane. There is a hemisphere passing through $1',2',3'$ and orthogonal to the equator plane. The {\it power circle} in this case is the intersection of the hemisphere and the equator plane.

For a mix type, the power circle can still be defined.

For any type, let $h_i$ be the distance from the center of the power circle to the edge $ij$ whose length is $l_k$.


\begin{theorem}
Let $$e^{u_i}=\frac{e^{r_i}-1}{e^{r_i}+1}=\tanh \frac{r_i}2.$$ Then
\[
    \frac{\partial \theta_1}{\partial u_2}=\frac{\partial \theta_2}{\partial u_1}
\]
which equal to
{\small
\[
\frac{\tanh h_3}{\sinh^2 l_3}\sqrt{2\cosh^{\epsilon_1}r_1 \cosh^{\epsilon_2}r_2 \cosh l_3
-\cosh^{2\epsilon_1}r_1-\cosh^{2\epsilon_2}r_2}.
\]
} \label{thm:hyperbolic_hessain}
\end{theorem}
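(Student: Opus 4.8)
The first equality, $\partial\theta_1/\partial u_2=\partial\theta_2/\partial u_1$, is the face-level content of the Symmetry Lemma \ref{lem:symmetry}; however, the point of the present theorem is to exhibit the common value geometrically, and in doing so to obtain a second, geometric proof of that symmetry. The plan is therefore to compute $\partial\theta_1/\partial u_2$ directly and to recognize the result as the quantity $\frac{\tanh h_3}{\sinh^2 l_3}\sqrt{2\cosh^{\epsilon_1}r_1\cosh^{\epsilon_2}r_2\cosh l_3-\cosh^{2\epsilon_1}r_1-\cosh^{2\epsilon_2}r_2}$. Since this expression is manifestly invariant under the exchange $(1,\epsilon_1,r_1)\leftrightarrow(2,\epsilon_2,r_2)$ — the edge $l_3=[v_1,v_2]$ and its perpendicular distance $h_3$ from the power center being symmetric in the two endpoints — the same computation with the roles of $1$ and $2$ interchanged would give $\partial\theta_2/\partial u_1$ equal to the identical value, which is exactly the asserted symmetry.

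For the direct computation I would factor the derivative through the edge lengths. Writing $l_1,l_2,l_3$ for the edges opposite $v_1,v_2,v_3$, only $l_1=[v_2,v_3]$ and $l_3=[v_1,v_2]$ depend on $u_2$, so
\[
\frac{\partial\theta_1}{\partial u_2}=\frac{\partial\theta_1}{\partial l_1}\frac{\partial l_1}{\partial u_2}+\frac{\partial\theta_1}{\partial l_3}\frac{\partial l_3}{\partial u_2}.
\]
The angle derivatives come from the hyperbolic derivative cosine law, which in the notation of \eqref{eqn:Hessain} is encoded by $-\tfrac{1}{2A}L\Theta$ with $A=\tfrac12\sin\theta_1\sinh l_2\sinh l_3$; explicitly $\partial\theta_1/\partial l_1=\sinh l_1/(2A)$ and $\partial\theta_1/\partial l_3=-\sinh l_1\cos\theta_2/(2A)$. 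The length derivatives are obtained by differentiating the mixed-type edge-length formula; they are precisely the entries of $L^{-1}D$, namely $\partial l_1/\partial u_2=\tau(1,2,3)/\sinh l_1$ and $\partial l_3/\partial u_2=\tau(3,2,1)/\sinh l_3$, where $\tau(i,j,k)=\cosh l_i\cosh^{\epsilon_j}r_j-\cosh^{\epsilon_k}r_k$. Substituting yields a first closed form
\[
\frac{\partial\theta_1}{\partial u_2}=\frac{1}{2A}\left(\tau(1,2,3)-\frac{\sinh l_1\cos\theta_2}{\sinh l_3}\,\tau(3,2,1)\right).
\]

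The next step — and the algebraic heart of the argument — is to simplify this so that all data attached to the third vertex disappears. Using the cosine law to rewrite $\cos\theta_2$ and $\cosh l_1$ in terms of $l_2,l_3$ and the radii, I expect the terms carrying $\cosh^{\epsilon_3}r_3$ and $l_2$ to cancel, leaving an expression in the edge-$3$ quantities $a:=\cosh^{\epsilon_1}r_1$, $b:=\cosh^{\epsilon_2}r_2$ and $\cosh l_3$ alone. The guiding identity is $2ab\cosh l_3-a^2-b^2=b^2\sinh^2 l_3-(a-b\cosh l_3)^2$, in which $a-b\cosh l_3=-\tau(3,2,1)$; this rewrites the radicand as a difference of squares and thereby explains the appearance of the square root.

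Finally I would pin down the geometric factor. Placing $\triangle_{123}$ in the hyperboloid (or ball) model, I would locate the power center $o$ from the defining equalities $\mathrm{pow}(v_1,o)=\mathrm{pow}(v_2,o)=\mathrm{pow}(v_3,o)$, drop the perpendicular from $o$ to $[v_1,v_2]$ with foot $w_3$ at distance $h_3$, and split $l_3=d_{13}+d_{23}$. The right triangles $ow_3v_1$ and $ow_3v_2$, together with the power relation at each endpoint, express $\tanh h_3$ and the half-edges through $a,b,\cosh l_3$; matching these against the simplified closed form would identify the coefficient as exactly $\tanh h_3/\sinh^2 l_3$ and the radical as the associated $\sinh$-length of the sub-triangle $ov_1v_2$. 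The main obstacle is this last identification: one must compute the power center and $h_3$ uniformly across the three vertex types $\epsilon_i\in\{-1,0,1\}$ (the orthogonal power circle, the circumscribed circle, and the equatorial-hemisphere construction) and verify that the single formula $\frac{\tanh h_3}{\sinh^2 l_3}\,S$ reproduces the algebraic answer in every case. The algebraic collapse of the previous step is precisely what makes this uniform geometric matching possible.
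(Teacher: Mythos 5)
Your overall route coincides with the paper's three-step proof. The paper also (i) computes $\partial\theta_1/\partial u_2$ by a direct calculation whose content is exactly your chain-rule expression $\frac{1}{2A}\bigl(\tau(1,2,3)-\frac{\sinh l_1\cos\theta_2}{\sinh l_3}\tau(3,2,1)\bigr)$, which for $\epsilon_1=\epsilon_2=\epsilon_3=1$ collapses to $\frac{(\cosh l_1\cosh l_3-\cosh l_2)\cosh r_1+(\cosh l_2\cosh l_3-\cosh l_1)\cosh r_2-\sinh^2 l_3\cosh r_3}{\sqrt{\mathcal{N}}\,\sinh^2 l_3}$ with $\mathcal{N}=1+2\cosh l_1\cosh l_2\cosh l_3-\sum_i\cosh^2 l_i=(2A)^2$; (ii) locates the power center by combining $\cosh x=\cosh r\cosh r_1$, etc., with $\alpha+\beta=\theta_3$ to solve $\cosh^2 r=\mathcal{N}/\mathcal{D}$; and (iii) computes $\tanh^2 h_3$ from the height formula for the sub-triangle $o12$ and matches it against (i). Like the paper, you treat only $\epsilon\equiv 1$ concretely and defer the mixed types to an analogous computation.

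The genuine problem is your ``algebraic heart.'' You expect the terms carrying $\cosh^{\epsilon_3}r_3$ and $l_2$ to cancel, leaving $\partial\theta_1/\partial u_2$ as a function of $a=\cosh^{\epsilon_1}r_1$, $b=\cosh^{\epsilon_2}r_2$ and $\cosh l_3$ alone. That cancellation does not occur and cannot: the target value $\frac{\tanh h_3}{\sinh^2 l_3}\sqrt{2ab\cosh l_3-a^2-b^2}$ is not a function of edge-$3$ data alone, because $h_3$ is measured from the power center, which is determined by all three circles; and indeed the explicit derivative above visibly retains $\cosh r_3$, $\cosh l_1$ and $\cosh l_2$. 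What the paper does is not a cancellation but an identification: substituting $\cosh^2 r=\mathcal{N}/\mathcal{D}$ into the height formula gives $\tanh^2 h_3=\frac{[(\cosh l_1\cosh l_3-\cosh l_2)\cosh r_1+(\cosh l_2\cosh l_3-\cosh l_1)\cosh r_2-\sinh^2 l_3\cosh r_3]^2}{\mathcal{N}\,(2\cosh r_1\cosh r_2\cosh l_3-\cosh^2 r_1-\cosh^2 r_2)}$, i.e.\ the same three-vertex numerator reappears squared, and dividing yields the theorem. Your closing paragraph gestures at exactly this matching, so the plan recovers, but as written the middle step would stall: drop the cancellation claim and instead carry the three-vertex numerator through the explicit computation of $\cosh^2 r$ and $\tanh h_3$.
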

This gives a geometric proof for the symmetry lemma \ref{lem:symmetry} in hyperbolic case.

We only need to prove the theorem for the case of $\epsilon_1=\epsilon_2=\epsilon_3=1$. General case can be proved similarly.

\begin{proof}
\textbf{Step 1}. Denote the center of the power circle by $o$, the radius by $r$. Let $x,y,z$ be the distance from $o$ to the vertices $1,2,3$.
Then
\begin{equation}\label{equ:xyz}
\begin{array}{lcl}
\cosh x&=&\cosh r \cosh r_1\\
\cosh y&=&\cosh r \cosh r_2\\
\cosh z&=&\cosh r \cosh r_3
\end{array}
\end{equation}

Let $\alpha$ be the angle $\angle 13o$ and $\beta$ the angle $\angle 23o$. Then $\alpha+\beta=\theta_3.$ Therefore
\begin{equation}\label{equ:3angle}
1+2\cos \alpha \cos \beta \cos \theta_3 = \cos^2\alpha + \cos^2 \beta +\cos^2 \theta_3.
\end{equation}

By the cosine law,
$$\cos \alpha=\frac{-\cosh x+ \cosh z \cosh l_2}{\sinh z \sinh l_2},$$
$$\cos \beta=\frac{-\cosh y+ \cosh z \cosh l_1}{\sinh z \sinh l_1},$$
$$\cos \theta_3=\frac{-\cosh l_3+ \cosh l_1 \cosh l_2}{\sinh l_1 \sinh l_2}.$$

Substituting the three formulas into the equation (\ref{equ:3angle}), we obtain a relation between the 6 numbers $l_1,l_2,l_3, x,y,z$.

Substituting the equations (\ref{equ:xyz}) into this relation, we obtain a relation between $l_1,l_2,l_3, r_1,r_2,r_3$ and $r$.

Solving for $r$, we get $\cosh^2 r=\frac{\mathcal{N}}{\mathcal{D}},$ where
{
$$\mathcal{N}=
1+2\cosh l_1 \cosh l_2 \cosh l_3 - \cosh^2 l_1 - \cosh^2 l_2 - \cosh^2 l_3,$$
}
{
\begin{align*}
\mathcal{D}&=\cosh^2 r_1 (1-\cosh^2 l_1) + 2 \cosh r_2 \cosh r_3 (\cosh l_2 \cosh l_3-\cosh l_1)\\
           &+\cosh^2 r_2 (1-\cosh^2 l_2) + 2 \cosh r_3 \cosh r_1 (\cosh l_3 \cosh l_1-\cosh l_2)\\
           &+\cosh^2 r_3 (1-\cosh^2 l_3) + 2 \cosh r_1 \cosh r_2 (\cosh l_1 \cosh l_2-\cosh l_3).
\end{align*}
}
\textbf{Step 2}. Since $h_3$ is the height of the triangle $\triangle_{o12}$ with bottom the edge $12$. By the standard formula of height of a hyperbolic triangle, we have
{\small
\[
\sinh^2 h_3=\frac{1+2\cosh x \cosh y \cosh l_3 - \cosh^2 x - \cosh^2 y - \cosh^2 l_3}{\sinh^2 l_3}.
\]}

After substituting the equations (\ref{equ:xyz}) into the above formula, we have
{\[
\tanh^2 h_3=\frac{\cosh^2 r(2\cosh r_1 \cosh r_2 \cosh l_3-\cosh^2 r_1 - \cosh^2 r_2 )- \sinh^2 l_3}
{\cosh^2 r(2\cosh r_1 \cosh r_2 \cosh l_3-\cosh^2 r_1 - \cosh^2 r_2 )}.
\]}
After substituting the equation $\cosh^2 r=\frac{\mathcal{N}}{\mathcal{D}},$ we have
{\[
\tanh^2 h_3=\frac{\mathcal{N}(2\cosh r_1 \cosh r_2 \cosh l_3-\cosh^2 r_1 - \cosh^2 r_2 )-\mathcal{D} \sinh^2 l_3}
{\mathcal{N}(2\cosh r_1 \cosh r_2 \cosh l_3-\cosh^2 r_1 - \cosh^2 r_2 )}.
\]}
After substituting the expressions of $\mathcal{N}$ and $\mathcal{D}$ in step 1, we have
{
\begin{align*}
&\tanh^2 h_3=\\
&\frac{[(\cosh l_1 \cosh l_3-\cosh l_2)\cosh r_1+ (\cosh l_2 \cosh l_3-\cosh l_1)\cosh r_2-\sinh^2 l_3 \cosh r_3]^2}
{\mathcal{N}(2\cosh r_1 \cosh r_2 \cosh l_3-\cosh^2 r_1 - \cosh^2 r_2 )}.
\end{align*}}

\textbf{Step 3}. By direct calculation, we have
{\begin{align*}
&\frac{\partial \theta_1}{\partial u_2}=\frac{\partial \theta_2}{\partial u_1}=\frac{-1}{\sin \theta_i \sinh l_j \sinh l_k}\\
&(\cosh r_3-\frac{\cosh l_1 \cosh l_3-\cosh l_2}{\sinh^2 l_3}\cosh r_1-\frac{\cosh l_2 \cosh l_3-\cosh l_1}{\sinh^2 l_3}\cosh r_2)=\\
&\frac {(\cosh l_1 \cosh l_3-\cosh l_2)\cosh r_1+(\cosh l_2 \cosh l_3-\cosh l_1)\cosh r_2-\sinh^2 l_3\cosh r_3}
{\sqrt{\mathcal{N}}\cdot \sinh^2 l_3}.
\end{align*}}
Comparing with the last formula of step 2, we have
{ $$\frac{\partial \theta_1}{\partial u_2}=\frac{\partial \theta_2}{\partial u_1}=\frac{\tanh h_3}{\sinh^2 l_3}\sqrt{2\cosh r_1 \cosh r_2 \cosh l_3-\cosh^2 r_1 - \cosh^2 r_2}.$$}
\end{proof}


\subsection{Spherical Case}

According to a general principle of the relation of hyperbolic
geometry and spherical geometry, to obtain a formula in spherical
geometry, we only need to replace $\sinh$ and $\cosh$ in hyperbolic
geometry by $\sqrt{-1}\sin$ and $\cos.$

For the mixed type of discrete conformal geometry with spherical
background geometry, the edge length of $\triangle_{123}$ is given by
{
\begin{align*}
&\cosh l_{ij}=\\
&-4\eta_{ij} \frac{\sin r_i}{(1-\epsilon_i)\cos
r_i+1+\epsilon_i}  \frac{\sin r_j}{(1-\epsilon_j)\cos
r_j+1+\epsilon_j} + \cos^{\epsilon_i} r_i \cos^{\epsilon_j} r_j.
\end{align*}}

Via the cosine law, the edge lengths $l_1,l_2,l_3$ determine the
angles $\theta_1,\theta_2,\theta_3$.

We can define power circles similarly. Let $h_i$ be the distance from
the center of the power circle to the edge $ij$ whose length is $l_k$.


\begin{theorem}
Let $$e^{u_i}=\tan \frac{r_i}2.$$ Then
\[
\frac{\partial \theta_1}{\partial u_2}=\frac{\partial
\theta_2}{\partial u_1}
\]
which equal to
{\small\[
\frac{\tan h_3}{\sin^2 l_3}\sqrt{-2\cos^{\epsilon_1}r_1
\cos^{\epsilon_2}r_2 \cos l_3
+\cos^{2\epsilon_1}r_1+\cos^{2\epsilon_2}r_2}.
\]}
\label{thm:spherical_hessain}
\end{theorem}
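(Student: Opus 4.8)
The plan is to derive the spherical identity directly from its hyperbolic counterpart, Theorem \ref{thm:hyperbolic_hessain}, via the analytic continuation principle recorded just above the statement: replace $\cosh \to \cos$ and $\sinh \to \sqrt{-1}\,\sin$ throughout. Exactly as in the hyperbolic argument, it suffices to treat $\epsilon_1=\epsilon_2=\epsilon_3=1$, since the mixed-type case follows from the identical computation with $\cosh^{\epsilon} r$ in place of $\cosh r$. First I would record how each factor of the hyperbolic answer transforms: $\tanh h_3 \to \sqrt{-1}\,\tan h_3$, while $\sinh^2 l_3 \to -\sin^2 l_3$, and every $\cosh^{\epsilon} r$ and $\cosh l_3$ becomes $\cos^{\epsilon} r$ and $\cos l_3$. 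In particular the hyperbolic radicand $2\cosh r_1\cosh r_2\cosh l_3-\cosh^2 r_1-\cosh^2 r_2$ maps to $A := 2\cos r_1\cos r_2\cos l_3-\cos^2 r_1-\cos^2 r_2$, whose negative is precisely the quantity under the spherical radical.

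The crux is then the bookkeeping of the factors of $\sqrt{-1}$, which is what explains the flipped sign inside the spherical square root. Writing $\sqrt{A}=\sqrt{-1}\,\sqrt{-A}$ and using $(\sqrt{-1})^2=-1$, the substituted hyperbolic answer assembles as
\[
\frac{\sqrt{-1}\,\tan h_3}{-\sin^2 l_3}\cdot \sqrt{-1}\,\sqrt{-A}
= \frac{\tan h_3}{\sin^2 l_3}\sqrt{-2\cos r_1\cos r_2\cos l_3+\cos^2 r_1+\cos^2 r_2},
\]
which is exactly the claimed formula. The sign introduced by $\sinh^2 l_3 \to -\sin^2 l_3$ is cancelled by the $(\sqrt{-1})^2$ coming from rewriting the radical in terms of $-A$, so no spurious imaginary factor survives and the result is real — as it must be, since $\partial\theta_1/\partial u_2$ is a genuine real derivative.

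To make this rigorous rather than merely formal, I would re-run the three steps of the hyperbolic proof verbatim with spherical functions. In Step 1 the orthogonality relations become $\cos x=\cos r\cos r_1$ (and cyclically), and solving the spherical analogue of equation (\ref{equ:3angle}) gives $\cos^2 r=\mathcal{N}/\mathcal{D}$ with $\mathcal{N},\mathcal{D}$ the literal $\cosh\to\cos$ transforms of the hyperbolic expressions. In Step 2 the spherical height formula $\sin^2 h_3=(1+2\cos x\cos y\cos l_3-\cos^2 x-\cos^2 y-\cos^2 l_3)/\sin^2 l_3$, combined with the relations (\ref{equ:xyz}) in spherical form, yields $\tan^2 h_3$ as a ratio whose numerator is a perfect square. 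Step 3 then differentiates $\theta_1$ using the spherical cosine law and the spherical edge-length formula and compares with Step 2. Because every relation used along the way is a rational identity in the circular functions, the substitution is literal and a square root is extracted only at the very end.

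The main obstacle, and the only genuinely delicate point, is pinning down the global sign and the branch of the final square root: analytic continuation fixes the answer only up to the ambiguity $\sqrt{A}=\pm\sqrt{-1}\,\sqrt{-A}$, so one must confirm that the branch producing a real, correctly signed $\tan h_3/\sin^2 l_3$ is the geometrically correct one. I would settle this by continuity — verifying the sign in a single explicit configuration (for instance an equilateral spherical triangle with equal radii, where $h_3>0$ and all the trigonometric quantities are positive) and then invoking continuity of both sides as the metric varies over the admissible region. This simultaneously re-establishes the symmetry $\partial\theta_1/\partial u_2=\partial\theta_2/\partial u_1$ in the spherical setting, completing the geometric proof of Lemma \ref{lem:symmetry}.
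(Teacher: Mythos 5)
Your proposal follows exactly the route the paper takes: the paper's entire proof of Theorem \ref{thm:spherical_hessain} is the one-line appeal to the principle of replacing $\cosh$ by $\cos$ and $\sinh$ by $\sqrt{-1}\sin$ in the hyperbolic result, Theorem \ref{thm:hyperbolic_hessain}. Your version is in fact more careful than the paper's, since you explicitly track the factors of $\sqrt{-1}$ (correctly showing they cancel to produce the sign flip inside the radical), outline re-running the three hyperbolic steps with circular functions, and address the branch of the square root by a continuity argument — none of which the paper spells out.
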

This gives a geometric proof for the symmetry lemma \ref{lem:symmetry} in spherical case.

This theorem is also proved by using the general principle: replace
$\sinh$ and $\cosh$ in hyperbolic geometry by $\sqrt{-1}\sin$ and
$\cos.$

Here we can give the second proof for the symmetry lemma
\ref{lem:symmetry} based on the geometric interpretation to the
Hessian, which is geometric and intuitive.

\begin{proof}
Formula \ref{eqn:geometric_hessian_1} show the symmetry for all schemes with
Euclidean background geometry; theorem~\ref{thm:hyperbolic_hessain}
proves the symmetry for the hyperbolic cases;
theorem~\ref{thm:spherical_hessain} for the spherical cases.
\end{proof}

\section{Geometric Interpretations to Ricci Energies}
\label{sec:energy}
The geometric interpretation to Ricci energies of
Euclidean and hyperbolic Yamabe schemes were discovered by Bobenko,
Pinkall and Springborn in \cite{Bobenko_Pinkall_Springborn_2010}.
The interpretation to Ricci energies of Euclidean schemes (without the mixed type) are illustrated in
\cite{ricci2013}. In the current work, we generalize the geometric interpretations to all
the schemes in all background geometries covered by the unified framework, as shown in Fig.~\ref{fig:energy_volume}.


\begin{figure}
\begin{center}
\begin{tabular}{lc}
\includegraphics[width=0.22\textwidth]{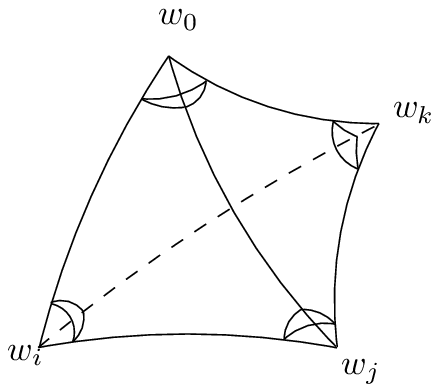}&
\includegraphics[width=0.27\textwidth]{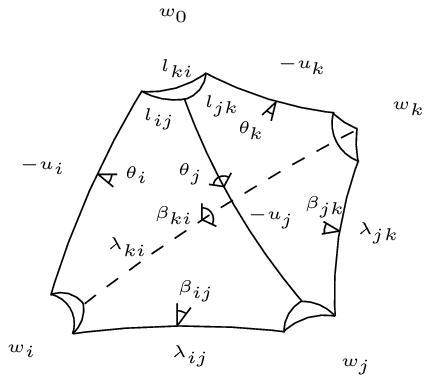}
\end{tabular}
\caption{Generalized hyperbolic tetrahedron.
\label{fig:hyperbolic_tetrahedron}}
\end{center}
\end{figure}

We use the upper half space model for $\mathbb{H}^3$, with Riemannian metric
\[
    ds^2 = \frac{dx^2+dy^2+dz^2}{z^2}
\]
the xy-plane is the ideal boundary. Consider a triangle $[v_i,v_j,v_k]$, its Ricci energy is closely related to the volume of a generalized hyperbolic tetrahedron whose vertices can be in $\mathbb{H}^3$, truncated by a horosphere or truncated by a hyperbolic plane.

In Fig. ~\ref{fig:hyperbolic_tetrahedron}, the generalized hyperbolic tetrahedron has 4 vertices $w_0, w_i, w_j, w_k.$ The tetrahedron vertex $w_0$ is called the \emph{top vertex}. The $4$ faces of the tetrahedron are hyperbolic planes, the $6$ edges are geodesics. The $6$ edge lengths of the generalized tetrahedron are $-u_i,-u_j,-u_k$ and $\lambda_{ij},\lambda_{jk}, \lambda_{ki}$. The generalized tetrahedron is uniquely determined by these $6$ edge lengths.

The followings are the common principles for constructing the generalized tetrahedron for all the schemes,
\begin{enumerate}
\item For all $\mathbb{E}^2$ schemes, the top vertex $w_0$ is ideal (at infinity) and truncated by a horosphere; for all $\mathbb{H}^2$ schemes, the top vertex is hyperideal (exceeding the boundary of $\mathbb{H}^3$) and truncated by a hyperbolic plane; for all $\mathbb{S}^2$ schemes, the top vertex is in $\mathbb{H}^3$.
\item For $w_i$, if the corresponding vertex $v_i$ is of inversive distance circle packing $\varepsilon_i=+1$, then it is hyperideal and truncated by a hyperbolic plane; if $v_i$ is of Yamabe flow $\varepsilon_i=0$, then it is ideal and truncated by a horosphere; if $v_i$ is virtual radius circle packing $\varepsilon_1=-1$, then it is in $\mathbb{H}^3$. Same results holds for $w_j$ and $w_k$.
\item The edges on the truncated tetrahedron, connecting to the top vertex on the original tetrahedron, have lengths $-u_i$, $-u_j$ and $-u_k$ respectively.
\item For the edge lengths $\lambda_{ij}$, there is a unified formula for three geometries: Euclidean, hyperbolic, spherical,
\begin{equation}
    \eta_{ij} = \frac{1}{2}( e^{\lambda_{ij}} + \varepsilon_i \varepsilon_j e^{-\lambda_{ij}}).
    \label{eqn:bottom_edge_length}
\end{equation}
\end{enumerate}

The triangle associated to the top vertex $w_0$ is the triangle $[v_i,v_j,v_k]$. It is obtained by truncating by a horosphere, truncating by a hyperbolic plane or intersecting with a sphere. Given $-u_i, -u_j, -u_k, \eta_{ij}, \eta_{jk}, \eta_{ki}$, using cosine law, we can calculate the edge lengths of the triangle $[v_i,v_j,v_k]$. They are exactly given by the formula Eqn.~\ref{eqn:unified_circle_packing_edge_lengths}. That means the triangle $[v_i,v_j,v_k]$ has lengths $l_{ij}, l_{jk}, l_{ki}$ and angles $\theta_i, \theta_j, \theta_k$.

Here we can give the third proof for the symmetry lemma based on the
geometric interpretation to the Ricci energy, which is more
geometric, intuitive and much easier to verify.
\begin{proof}
As shown in Fig. ~\ref{fig:hyperbolic_tetrahedron}, for a generalized hyperbolic tetrahedron, the $4$ vertices can have any types. The $3$ vertical edges have lengths $-u_i,-u_j,-u_k$ with dihedral angles $\theta_i,\theta_j,\theta_k$. The bottom edges have lengths $\lambda_{ij}, \lambda_{jk}, \lambda_{ki}$ with dihedral angles $\beta_{ij}, \beta_{jk},\beta_{ki}$.

Let $V$ be the volume of the generalized hyperbolic tetrahedron. By Schl\"{a}fli formula
{\begin{equation}
dV = -\frac{1}{2}\left( -u_id\theta_i - u_jd\theta_j - u_kd\theta_k + \lambda_{ij} d\beta_{ij} + \lambda_{jk} d\beta_{jk} + \lambda_{ki} d\beta_{ki} \right)
\label{eqn:schlafli}
\end{equation}}
During the Ricci flow, the conformal structure coefficients $\eta_{ij}, \eta_{jk},\eta_{ki}$ are invariant, so $\lambda_{ij}, \lambda_{jk},\lambda_{ki}$ are fixed. Because the generalized tetrahedron is determined by the edge lengths $-u_i,-u_j,-u_k,\lambda_{ij}, \lambda_{jk},\lambda_{ki}$, during the flow, all dihedral angles $\theta_i,\theta_j,\theta_k, \beta_{ij},\beta_{jk},\beta_{ki}$ are functions of $u_i,u_j,u_k$, the volume $V$ is also the function of $u_i,u_j,u_k$.

Consider the function,
{
\begin{equation}
W(u_i,u_j,u_k)=u_i\theta_i + u_j\theta_j + u_k\theta_k - \lambda_{ij}\beta_{ij} - \lambda_{jk}\beta_{jk}-\lambda_{ki}\beta_{ki} - 2V
\label{eqn:geometric_ricci_flow_energy}
\end{equation}}
hence,
{\footnotesize
\[
\begin{array}{lcl}
dW &=& \theta_i du_i + \theta_j du_j + \theta_k du_k \\
&& + u_i d\theta_i + u_j d\theta_j + u_k d\theta_k -\lambda_{ij} d\beta_{ij} - \lambda_{jk}d\beta_{jk} - \lambda_{ki} d\theta_{ki}\\
&& -2dV
\end{array}
\]}
substitute Schl\"{a}fli formula Eqn.~\ref{eqn:schlafli}, we have
\[
    dW = \theta_i du_i + \theta_j du_j + \theta_k du_k
\]
therefore
\[
    W = \int \theta_i du_i + \theta_j du_j + \theta_k du_k + c.
\]
$W$ in fact, is the discrete Ricci energy on face in Eqn.~\ref{eqn:face_ricci_energy}. This shows the differential 1-form
\begin{equation}
    \theta_i du_i + \theta_j du_j + \theta_k du_k
    \label{eqn:differential}
\end{equation}
is exact, therefore closed. Namely, the Hessian matrix
\[
    \frac{\partial (\theta_i,\theta_j,\theta_k)}{\partial (u_i,u_j,u_k)}
\]
is symmetric.
\end{proof}
The formula Eqn.~\ref{eqn:geometric_ricci_flow_energy}
represents the Ricci energy on a face as the volume of the
generalized hyperbolic tetrahedron with other terms of conformal
factors and conformal structure coefficients. This formula was
introduced first by Bobenko, Pinkall and Springborn in
\cite{Bobenko_Pinkall_Springborn_2010} for Euclidean and hyperbolic
Yamabe flow. In the current work, we generalize it to all $18$
schemes. The differential in Eqn.~\ref{eqn:differential} is independent of the choice of horospheres, since the Schl\"{a}fli formula is independent of the choice of horospher for an ideal vertex.

\if 0
\subsection{Euclidean Background Geometry}

For all schemes with Euclidean background geometry, the geometric
interpretation to the discrete Ricci energy in Eqn.
\ref{eqn:triangle_ricci_energy} is the volume of a generalized ideal
hyperbolic prism. We draw the triangle on the xy-plane, and draw
vertical lines through three vertices, this gives us a prism. Then
we through each vertex circle, we either draw a hyperbolic plane
(Euclidean hemispheres orthogonal to the xy-plane) or a horosphere,
and/or draw a hyperbolic plane through the power circle. The prism
is cut off by these hyperbolic planes or horospheres, the volume of
the generalized hyperbolic idea tetrahedron is the discrete Ricci
energy in Eqn. \ref{eqn:triangle_ricci_energy}. Details are
illustrated by Fig. \ref{fig:cp_schemes}.

In the above construction, the choices of horospheres through one vertex are non-unique, because we are only interested in the derivative and the Hessian of the energy, different choices will lead to the same result.

\paragraph{Tangential Circle Packing} Frame (a) in Fig.~\ref{} shows the tangential circle packing scheme, where circles at the end vertices of each edge are tangential to each other. The edge length is the summation of circle radii,
\[
    l_{ij} = \gamma_i + \gamma_j.
\]
The generalized hyperbolic tetrahedron is obtained by cutting off
the prism by the hyperbolic planes through all the vertex circles
and the power circle.

\paragraph{Thurston's Circle Packing}  As shown in Fig. \ref{fig:cp_schemes} row (b), two circles centered at the end vertices of the same edge intersect each other. Assume the intersection angle on edge $[v_i,v_j]$ is $\phi_{ij}$, where $ \phi_{ij}\in [0,\frac{\pi}{2}]$, which determines the conformal structure coefficient $\eta_{ij}=\cos\phi_{ij}$. Then the edge length is given by
\[
    l_{ij}^2 = 2\cos\phi_{ij} \gamma_i\gamma_j + \gamma_i^2+\gamma_j^2.
\]
The tangential circle packing is the special case of this scheme,
where each $\phi_{ij}$ is zero. The construction of the generalized
hyperbolic tetrahedron is similar to that of tangent circle packing.

\paragraph{Inversive Distance Circle Packing}  As shown in Fig. \ref{fig:cp_schemes} row (c), this scheme further generalizes Thurston's scheme. On each edge $[v_i,v_j]$, two vertex circles are separated, the discrete conformal structure coefficient $\eta_{ij}>1$. The edge length is given by
\[
    l_{ij}^2 = 2\eta_{ij} \gamma_i\gamma_j + \gamma_i^2+\gamma_j^2.
\]
The construction of the generalized hyperbolic tetrahedron is
similar to that for Thurston's circle packing.

\paragraph{Yamabe Flow}  As shown in Fig. \ref{fig:cp_schemes} row (d), the vertex circles shrink to points, the edge length is given by
\[
    l_{ij}^2 = 2\eta_{ij} \gamma_i\gamma_j,
\]
where $\eta_{ij} = \frac{L_{ij}^2}{2}$, $L_{ij}$ is the initial edge length, namely
\[
    l_{ij} = e^{\frac{u_i}{2}} L_{ij} e^{\frac{u_j}{2}}.
\]
The power circle is the circum circle of the triangle. The truncated hyperbolic tetrahedron is constructed as follows: draw a horosphere through the vertices $v_i$,$v_j$ and $v_k$, with Euclidean radii $\gamma_i$, $\gamma_j$ and $\gamma_k$, and draw a hyperbolic plane (a hemisphere) through the power circle. The prism is cut off by these horospheres and the hyperbolic plane.

[Add that the absolute positions of horospheres are unnecessary.]

\begin{figure}
\begin{center}
\vspace{-3cm}
\begin{tabular}{lc}
\includegraphics[width=1.5in]{../figures/scheme/tangent_1.eps}&
\includegraphics[width=3in]{../figures/scheme/tangent.eps}\\
(a) tangential circle packing & $\eta=1$, $\epsilon=1$\\
\includegraphics[width=1.5in]{../figures/scheme/thurston_1.eps}&
\includegraphics[width=3in]{../figures/scheme/thurston_new.eps}\\
(b) Thurston's circle packing & $ 0\le \eta \le 1$, $\epsilon=1$\\
\hspace{-2cm}
\includegraphics[width=2.5in]{../figures/scheme/inverse_2.eps}&
\includegraphics[width=3in]{../figures/scheme/inverse_new.eps}\\
(c) Inversive distance circle packing& $\eta\ge 1$, $\epsilon=1$ \\
\includegraphics[width=1.5in]{../figures/scheme/yamabe_1.eps}&
\includegraphics[width=3in]{../figures/scheme/yamabe_new.eps}\\
(d) Yamabe flow & $\eta>0$, $\epsilon=0$\\
\includegraphics[width=2in]{../figures/scheme/virtual_1.eps}&
\includegraphics[width=3in]{../figures/scheme/virtual_new.eps}\\
(e) virtual radius circle packing & $\eta>0$, $\epsilon=-1$\\
\end{tabular}
\caption{Different circle packing schemes\label{fig:cp_schemes}}
\end{center}
\end{figure}

\begin{figure}
\begin{center}
\begin{tabular}{cc}
\includegraphics[width=2in]{../figures/scheme/mixed_1.eps}&
\includegraphics[width=3in]{../figures/scheme/mixed_new.eps}\\
\end{tabular}
\caption{Mixed type scheme\label{fig:mixed_scheme}}
\end{center}
\end{figure}

\paragraph{Virtual Radius Circle Packing}  As shown in Fig. \ref{fig:cp_schemes} row (e), the vertex circles are with imaginary radius, the edge length is given by
\[
    l_{ij}^2 = 2\eta_{ij}\gamma_i\gamma_j - \gamma_i^2 - \gamma_j^2.
\]
We vertically lift vertex $v_i$ to $p_i$, $p_i = v_i +
\gamma_i^2\mathbf{n}$, similarly lift $v_j$ and $v_k$ to $p_j$ and
$p_k$. Draw a hemisphere through $p_i$, $p_j$ and $p_k$, orthogonal
to the xy-plane. The hemisphere cuts off the prism, the volume of
the generalized hyperbolic tetrahedron gives the Ricci energy.

\paragraph{Mixed Scheme} Fig. \ref{fig:mixed_scheme} shows one example of the mixed type scheme, which mixes inversive distance circle packing at $v_i$, virtual radius circle packing $v_j$ and Yamabe flow at $v_k$, namely $(\epsilon_i,\epsilon_j,\epsilon_k)=(+1,-1,0)$. The edge length is given by
\[
l_{k}^2=2\eta_{ij}\gamma_i\gamma_j + \gamma_i^2-\gamma_j^2,l_{i}^2=2\eta_{jk}\gamma_j\gamma_k -\gamma_j^2,l_{j}^2= 2\eta_{ki}\gamma_k\gamma_i + \gamma_i^2.
\]
The power circle is constructed as follows: lift $v_j$ to $p_j=v_j+\gamma_j^2\mathbf{n}$, draw a hemisphere through $v_k$ and $p_j$, and its equator is orthogonal to the circle $(v_i,\gamma_i)$. The Hessian matrix formulae, Eqn. \ref{eqn:geometric_hessian_1} and Eqn. \ref{eqn:geometric_hessian_2}, still hold. We draw hemispheres from the vertex circle $(v_i,\gamma_i)$ and the power circle, and a horosphere through $v_k$ with Euclidean raidus $\gamma_k$, the prims is cut off by these spheres, the volume of the left part is the discrete Ricci energy.

\subsection{Hyperbolic Background Geometry}

\paragraph{Inversive Distance Circle Packing} The tangential and Thurston's circle packings can be unified as inversive distance circle packing. In this scheme $\epsilon$ is $+1$, $\eta_{ij}>0$, the length is given by
\[
    \cosh l_{ij}=\eta_{ij}\sinh \gamma_i \sinh \gamma_j+\cosh \gamma_i \cosh \gamma_j.
\]

As shown in Fig. \ref{fig:energy_volume} 1st row and 2nd column, the hyperbolic tetrahedron has four hyperideal vertices, each vertex is cut by a hyperbolic plane, such that the top section is a hyperbolic triangle with lengths $l_{ij}$, $l_{jk}$ and $l_{ki}$, the ``vertical'' edge lengths are $-u_i$, $-u_j$ and $-u_k$ respectively. The bottom lengths are $\lambda_{ij}$, $\lambda_{jk}$ and $\lambda_{ki}$ respectively, where
\begin{equation}
    \cosh \lambda_{ij} = \eta_{ij}.
    \label{eqn:lambda_eta}
\end{equation}
\paragraph{Yamabe Flow}
In this scheme $\epsilon$ is $0$, $\eta_{ij}>0$, The edge length is given by
\[
    \sinh \frac{l_{ij}}{2}=e^{\frac{u_i}{2}}\sinh \frac{L_{ij}}{2}e^{\frac{u_j}{2}},
\]
where $L_{ij}$ is the initial edge length.

As shown in Fig. \ref{fig:energy_volume} 2nd row and 2nd column, the hyperideal tetrahedron has one hyperideal vertex and three ideal vertices. The top hyperideal vertex is cut by a hyperbolic plane, the section is a hyperbolic triangle with edge lengths $l_{ij}, l_{jk}$ and $l_{ki}$. The three ideal vertices are cut by horosphers, such that the ``vertical'' edge lengths are $-u_i$, $-u_j$ and $-u_k$ respectively. Furthermore, the bottom edge lengths are $\lambda_{ij}$, $\lambda_{jk}$ and $\lambda_{ki}$ respectively, where
\[
    \lambda_{ij} = 2 \log \sinh \frac{L_{ij}}{2} = \log 2\eta_{ij}.
\]

\paragraph{Virtual radius Circle Packing}
In this scheme $\epsilon$ is $-1$, $\eta_{ij}>0$, The edge length is given by
\[
    \cosh l_{ij}=\frac{\eta_{ij}\sinh r_i \sinh r_j +1}{\cosh r_i \cosh r_j}.
\]
As shown in Fig. \ref{fig:energy_volume} 3rd row and 2nd column, the hyperbolic tetrahedron has one hyperideal vertex and three vertices inside $\mathbb{H}^3$. The top hyperideal vertex is cut by a hyperbolic plane, the section is a hyperbolic triangle with edge lengths $l_{ij}, l_{jk}$ and $l_{ki}$. The ``vertical'' edge lengths are $-u_i$, $-u_j$ and $-u_k$ respectively. Furthermore, the bottom edge lengths are $\lambda_{ij}$, $\lambda_{jk}$ and $\lambda_{ki}$ respectively, which are given by $\eta_{ij}, \eta_{jk}$ and $\eta_{ki}$ by Eqn.~\ref{eqn:lambda_eta}.

\paragraph{Mixed type Scheme}

Similar to the Euclidean case, there is the mixed type hyperbolic
scheme. The energy function relates to a generalized hyperbolic
tetrahedron. It top vertex is hyperideal. It three bottom vertices
are determined by the types $\epsilon_i,\epsilon_j$ and
$\epsilon_k$. If $\epsilon_i$ is $+1$ ($0$ or $-1$), the vertex
corresponding to $v_i$ is hyperideal ( ideal or inside
$\mathbb{H}^3$ respectively). The top section is a hyperbolic
triangle with edge lengths $l_{ij}, l_{jk}$ and $l_{ki}$. The
``vertical'' edge lengths are $-u_i$, $-u_j$ and $-u_k$
respectively. Furthermore, the bottom edge lengths are
$\lambda_{ij}$, $\lambda_{jk}$ and $\lambda_{ki}$ respectively,
which are given by $\eta_{ij}, \eta_{jk}$ and $\eta_{ki}$ by
Eqn.~\ref{eqn:bottom_edge_length}.

\subsection{Spherical background geometry}

\paragraph{Inversive Distance Circle Packing} The tangential and Thurston's circle packings can be unified as inversive distance circle packing. In this scheme $\epsilon$ is $+1$, $\eta_{ij}>0$, the length is given by
\[
    \cos l_{ij}=\eta_{ij}\sin \gamma_i \sin \gamma_j+\cos \gamma_i \cos \gamma_j.
\]

As shown in Fig. \ref{fig:energy_volume} 1st row and 3rd column, the
top vertex of the hyperbolic tetrahedron is inside $\mathbb{H}^3$,
with three corner angles $l_{ij}$, $l_{jk}$ and $l_{ki}$. The other
three hyperideal vertices are cut by three hyerbolic planes, such
that the ``vertical'' edge lengths are $-u_i$, $-u_j$ and $-u_k$
respectively. The bottom lengths are $\lambda_{ij}$, $\lambda_{jk}$
and $\lambda_{ki}$ respectively, where $\lambda_{ij}$ and
$\eta_{ij}$ are related by Eqn. \ref{eqn:lambda_eta}.

\paragraph{Yamabe Flow}
In this scheme $\epsilon$ is $0$, $\eta_{ij}>0$, The edge length is given by
\[
   \sin \frac{l_{ij}}{2}=e^{\frac{u_i}{2}}\sin \frac{L_{ij}}{2}e^{\frac{u_j}{2}},
\]
where $L_{ij}$ is the initial edge length.

As shown in Fig. \ref{fig:energy_volume} 2nd row and 3rd column, the top vertex of the hyperbolic tetrahedron is inside $\mathbb{H}^3$, with three corner angles $l_{ij}$, $l_{jk}$ and $l_{ki}$. The other three ideal vertices are cut by three horospheres, such that the ``vertical'' edge lengths are $-u_i$, $-u_j$ and $-u_k$ respectively. The bottom lengths are $\lambda_{ij}$, $\lambda_{jk}$ and $\lambda_{ki}$ respectively, which are given by $\eta_{ij}$ as
\[
    \lambda_{ij} = \log 2\eta_{ij}.
\]

\paragraph{Virtual radius Circle Packing}
In this scheme $\epsilon$ is $-1$, $\eta_{ij}>0$, The edge length is given by
\[
    \cos l_{ij}=\frac{\eta_{ij}\sin \gamma_i \sin \gamma_j +1}{\cos \gamma_i \cos \gamma_j}.
\]

As shown in Fig. \ref{fig:energy_volume} 3rd row and 3rd column, the hyperbolic tetrahedron has all four vertices inside $\mathbb{H}^3$. The top vertex is with three corner angles $l_{ij}$, $l_{jk}$ and $l_{ki}$, the ``vertical'' edge lengths are $-u_i$, $-u_j$ and $-u_k$ respectively. The bottom lengths are $\lambda_{ij}$, $\lambda_{jk}$ and $\lambda_{ki}$ respectively, where
\[
    \cosh \lambda_{ij} = \eta_{ij}.
\]

\paragraph{Mixed type scheme}
Similar to $\mathbb{E}^2$ and $\mathbb{H}^2$ cases, there is the
mixed type for $\mathbb{S}^2$ case. The energy function relates to a
generalized hyperbolic tetrahedron. It top vertex is in
$\mathbb{H}^3$. It three bottom vertices are determined by the types
$\epsilon_i,\epsilon_j$ and $\epsilon_k$. If $\epsilon_i$ is $-1$ (
$0$ or $+1$ ), the corresponding vertex on the textrhedron is
hyperideal, ideal or inside $\mathbb{H}^3$ respectively.  The top
vertex is with three corner angles $l_{ij}$, $l_{jk}$ and $l_{ki}$,
the ``vertical'' edge lengths are $-u_i$, $-u_j$ and $-u_k$
respectively. The bottom lengths are $\lambda_{ij}$, $\lambda_{jk}$
and $\lambda_{ki}$ respectively, where
\[
    \cosh \lambda_{ij} = \eta_{ij}.
\]
\fi

\section{Conclusion}
\label{sec:conclusion}

This work establishes a unified framework for discrete surface Ricci
flow, which covers most existing schemes: tangential circle packing,
Thurston's circle packing, inversive distance circle packing,
discrete Yamabe flow, virtual radius circle packing and mixed
scheme, with Spherical, Euclidean and hyperbolic background
geometry. The unified frameworks for hyperbolic and spherical
schemes are introduced to the literature for the first time. For
Euclidean schemes, our formulation is equivalent to Glickenstein's
geometric construction.

Four newly discovered schemes are introduced, which are hyperbolic
and Euclidean virtual radius circle packing and the mixed schemes.

This work introduces a geometric interpretation to the Hessian of
discrete Ricci energy for all schemes, which generalizes
Glickenstein's formulation in Euclidean case.

This work also gives explicit geometric interpretations to the
discrete Ricci energy for all the schemes, which generalizes
Bobenko, Pinkall and Springborn's construction
\cite{Bobenko_Pinkall_Springborn_2010} for Yamabe flow cases.

The unified frame work deepen our understanding to the the discrete
surface Ricci flow theory, and inspired us to discover the novel
schemes of virtual radius circle packing and the mixed scheme,
improved the flexibility and robustness of the algorithms, greatly
simplified the implementation and improved the efficiency.

In the future, we will focus on answering the following open
problems: whether all possible discrete surface Ricci flow schemes
are the variations of the current unified approach on the primal
meshes and the dual diagrams and so on.
\vspace{-0.6cm} 
\bibliographystyle{abbrv}
\bibliography{../bib/chapter_2,../bib/chapter_3,../bib/chapter_4,../bib/chapter_5}

\section*{Appendix}
\label{sec:appendix}
In the appendix, we explain the unified surface Ricci flow algorithm \ref{alg:unified_ricci_flow} in details, and reorganize all the formulae necessary for the coding purpose.

\begin{algorithm}
\caption{
\footnotesize{Unified Surface Ricci Flow}\label{alg:unified_ricci_flow}}
\footnotesize{\begin{algorithmic}[1]
\REQUIRE The inputs include:\\
1. A triangular mesh $\Sigma$, embedded in $\mathbb{E}^3$; \\
2. The background geometry, $\mathbb{E}^2$, $\mathbb{H}^2$ or $\mathbb{S}^2$;\\
3. The circle packing scheme, $\epsilon\in \{+1,0,-1\}$;\\
4. A target curvature $\bar{K}$, $\sum \bar{K}_i = 2\pi
\chi(\Sigma)$ and $\bar{K}_i \in (-\infty, 2\pi)$.\\
5. Step length $\delta t$

\ENSURE A discrete metric conformal to the original one, which
realizes the target curvature $\bar{K}$. \STATE Initialize the
circle radii $\gamma$, discrete conformal factor $u$ and conformal
structure coefficient $\eta$, obtain the initial circle packing
metric $(\Sigma,\gamma, \eta, \epsilon)$ \WHILE{ $\max_i |\bar{K}_i
- K_i|
> threshold $} \STATE Compute the circle radii $\gamma$ from the
conformal factor $u$ \STATE Compute the edge length from $\gamma$
and $\eta$ \STATE Compute the corner angle $\theta_i^{jk}$ from the
edge length using cosine law \STATE Compute the vertex curvature $K$
\STATE Compute the Hessian matrix $H$ \STATE Solve linear system
$H\delta u = \bar{K}- K$ \STATE Update conformal factor $u
\leftarrow u - \delta t \times \delta u$ \ENDWHILE \STATE Output the
result circle packing metric.
\end{algorithmic}}
\end{algorithm}

\paragraph{ Step 1. Initial Circle Packing $(\gamma,\eta)$} Depending on different schemes, the initialization of the circle packing is different. The mesh has induced Euclidean metric $l_{ij}$. For inversive distance circle packing, we choose
\[
    \gamma_i = \frac{1}{3} \min_j  l_{ij},
\]
this ensures all the vertex circles are separated. For Yamabe flow, we choose all $\gamma_i$ to be $1$. For virtual radius circle packing, we choose all $\gamma_i$'s to be $1$. Then $\gamma_{ij}$ can be computed using the $l_{ij}$ formula in Tab. \ref{tab:summary}.
\paragraph{ Step 3. Circle Radii $\gamma$} The computation for circle radii from conformal factor uses the formulae in the first column in Tab.\ref{tab:summary}.
\paragraph{ Step 4. Edge Length $l$} The computation of edge lengths from conformal factor $u$ and conformal structure coefficient $\eta$ uses the formulae in the 2nd column in Tab.\ref{tab:summary}
\paragraph{ Step 5. Corner Angle $\theta$} The computation from edge length $l$ to the corner angle $\theta$ uses the cosine law formulae,
\[
    \begin{array}{rclr}
    l_k^2 &=& \gamma_i^2 + \gamma_j^2 - 2l_il_j \cos\theta_k & \mathbb{E}^2\\
    \cosh l_k &=& \cosh l_i \cosh l_j - \sinh l_i \sinh l_j \cos\theta_k &\mathbb{H}^2\\
    \cos l_k &=& \cos l_i \cos l_j - \sin l_i \sin l_j \cos\theta_k &\mathbb{S}^2\\
    \end{array}
\]

\paragraph{ Step 6. Vertex Curvature $K$} The vertex curvature is defined as angle deficit
\[
    K(v_i) = \left\{
    \begin{array}{rl}
    2\pi - \sum_{[v_i,v_j,v_k]} \theta_i^{jk} & v_i \not\in \partial \Sigma\\
    \pi - \sum_{[v_i,v_j,v_k]} \theta_i^{jk} & v_i \not\in \partial \Sigma\\
    \end{array}
    \right.
\]

\begin{table}
{\tiny
\begin{tabular}{|l|l|l|l|l|}
\hline
&$u_i$ & Edge Length $l_{ij}$ & $\tau(i,j,k)$ & $s(x)$ \\
\hline
\hline
$\mathbb{E}^2$ & $\log\gamma_i$ & $l_{ij}^2 = 2\eta_{ij} e^{u_i+u_j} + \epsilon_i e^{2u_i}+ \epsilon_j e^{2u_j}$ & $\frac{1}{2}(l_i^2+ \epsilon_j\gamma_j^2-\epsilon_k\gamma_k^2) $ & $x$\\
\hline
$\mathbb{H}^2$ & $\log\tanh\frac{\gamma_i}{2}$ & $\cosh l_{ij} = \frac{4\eta_{ij}+(1+\epsilon_i e^{2u_i})(1+\epsilon_j e^{2u_j})}{(1-\epsilon_i e^{2u_i})(1-\epsilon_j e^{2u_j})}$ & $\cosh l_i \cosh^{\epsilon_j} \gamma_j - \cosh^{\epsilon_k} \gamma_k $ &
 $\sinh x$\\
\hline
$\mathbb{S}^2$ &  $\log\tan \frac{\gamma_i}{2}$ & $\cos l_{ij} = \frac{4\eta_{ij}+(1-\epsilon_i e^{2u_i})(1-\epsilon_j e^{2u_j})}{(1+\epsilon_i e^{2u_i})(1+\epsilon_j e^{2u_j})}$ & $\cos l_i \cos^{\epsilon_j} \gamma_j - \cos^{\epsilon_k} \gamma_k $ &
 $\sin x$ \\
\hline
\end{tabular}}
\caption{Formulae for $\mathbb{E}^2$, $\mathbb{H}^2$ and $\mathbb{S}^2$ background geometries.\label{tab:summary}}
\vspace{-0.5cm}
\end{table}

\normalsize

\paragraph{ Step 7. Hessian Matrix $H$}
\[
    \frac{\partial(\theta_i,\theta_j,\theta_k)}{\partial(u_i,u_j,u_k)} = -\frac{1}{2A} L \Theta L^{-1} D,
\]
where
\[
    A = \sin\theta_i s(l_j) s(l_k),
\]
and
\[
    L = diag( s(l_i), s(l_j), s(l_k) ),
\]
%
and
\[
D = \left(
\begin{array}{ccc}
0 & \tau(i,j,k) & \tau( i,k,j )\\
\tau(j,i,k) & 0 & \tau( j,k,i )\\
\tau(k,i,j) & \tau(k,j,i) & 0
\end{array}
\right).
\]

\begin{figure*}
\begin{center}
\begin{tabular}{ccc}
Euclidean $\mathbb{E}^2$ & Hyperbolic $\mathbb{H}^2$ & Spherical $\mathbb{S}^2$ \\
\includegraphics[width=0.31\textwidth]{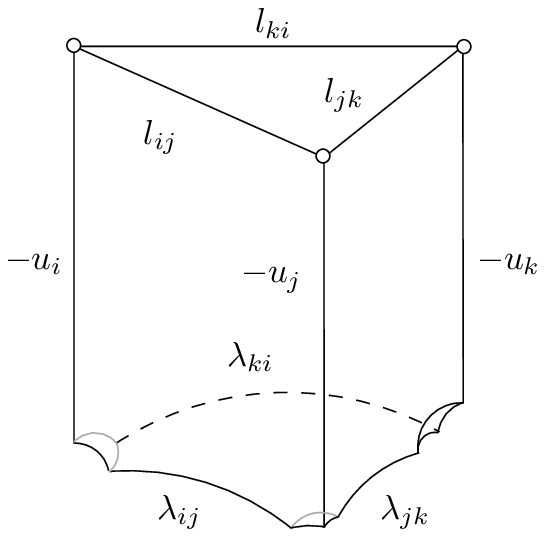}&
\includegraphics[width=0.31\textwidth]{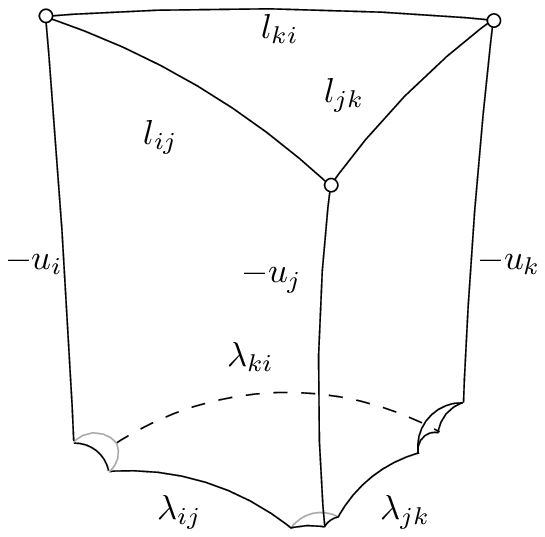}&
\includegraphics[width=0.31\textwidth]{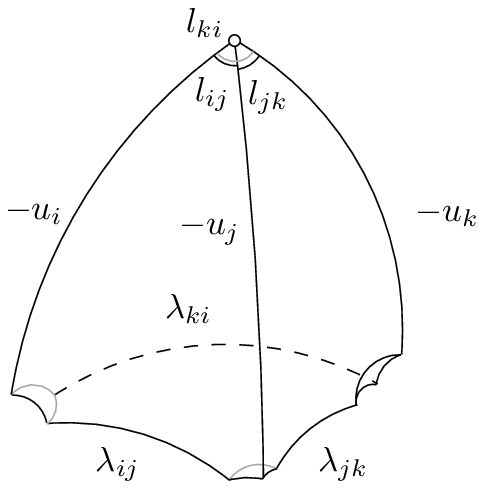}\\
&Inversive Distance Circle Packing & \\
&&\\
\includegraphics[width=0.31\textwidth]{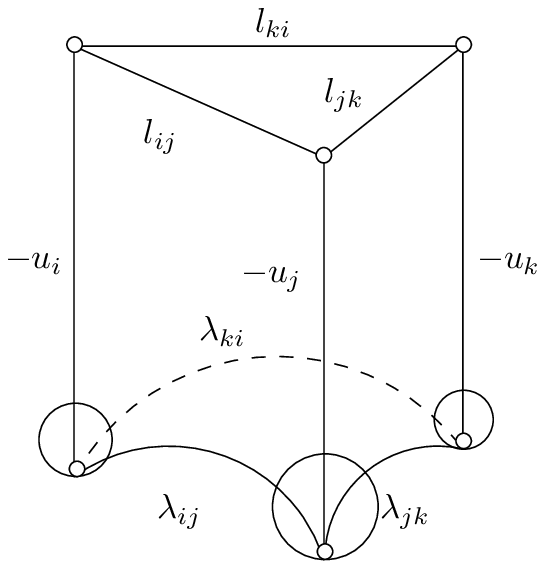}&
\includegraphics[width=0.31\textwidth]{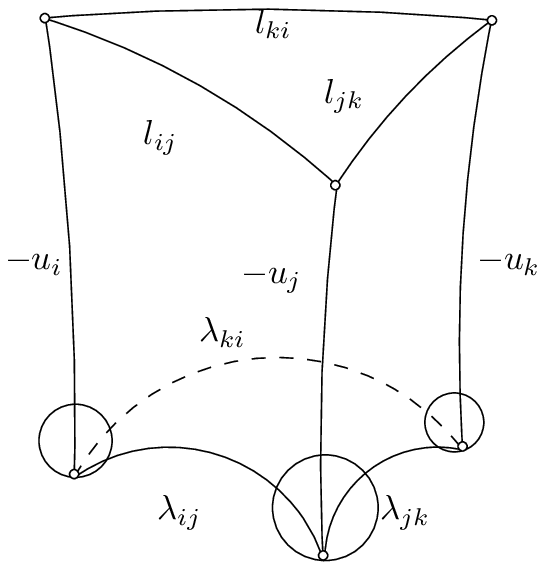}&
\includegraphics[width=0.31\textwidth]{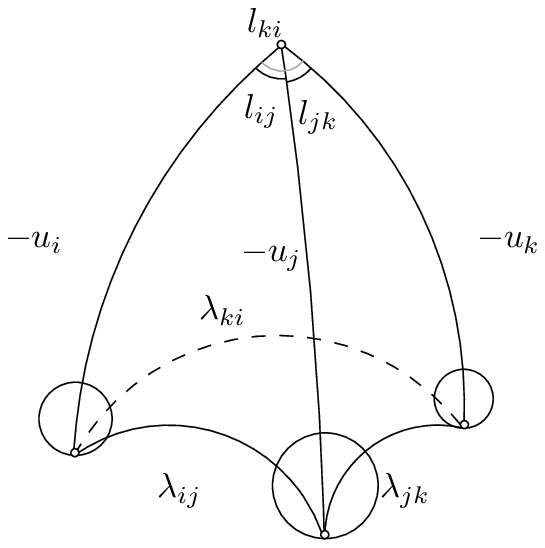}\\
&Yamabe flow & \\
&&\\
\includegraphics[width=0.31\textwidth]{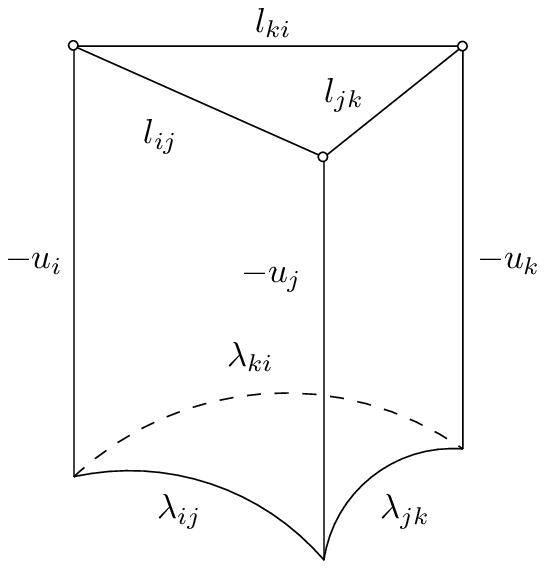}&
\includegraphics[width=0.31\textwidth]{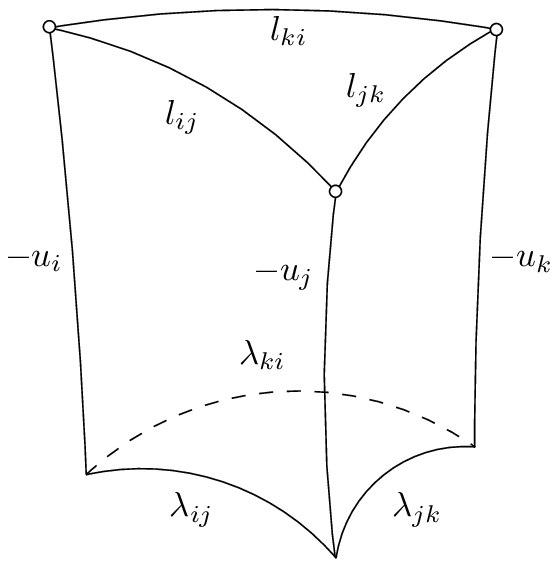}&
\includegraphics[width=0.31\textwidth]{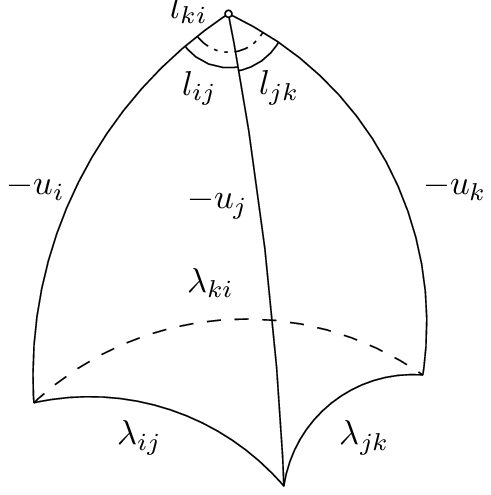}\\
&Virtual radius Circle Packing &\\
&&\\
\includegraphics[width=0.31\textwidth]{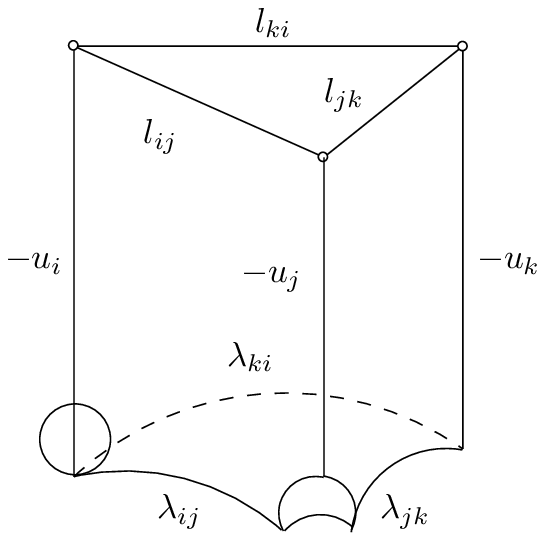}&
\includegraphics[width=0.31\textwidth]{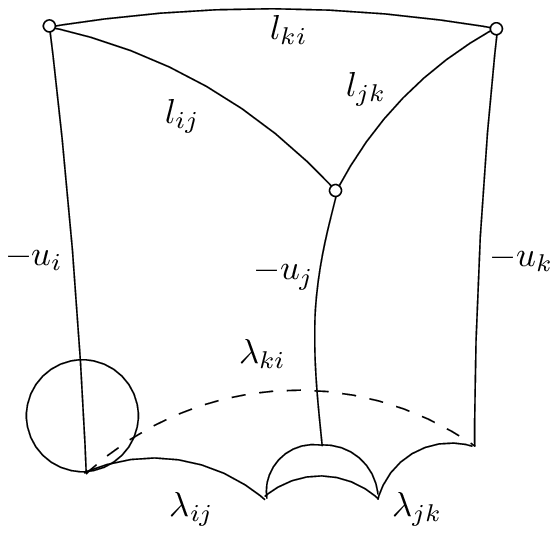}&
\includegraphics[width=0.31\textwidth]{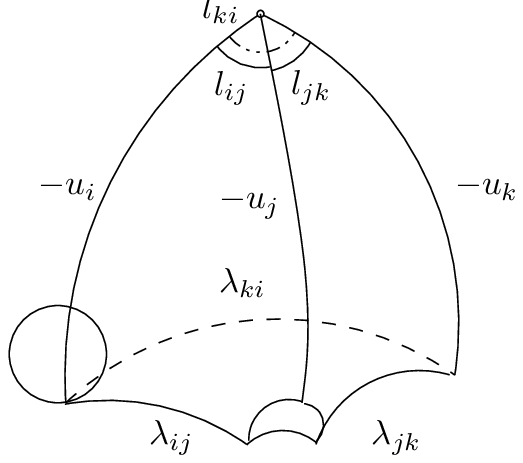}\\
&Mixed type schemes &\\
\end{tabular}
\caption{Geometric interpretation to discrete Ricci energy - volumes
of generalized hyperbolic tetrahedra.\label{fig:energy_volume}}
\end{center}
\end{figure*}
\paragraph{Step. 8 Linear System} If the $\Sigma$ is with $\mathbb{H}^2$ background geometry, then the Hessian matrix $H$ is positive define; else if $\Sigma$ is with $\mathbb{E}^2$ background geometry, then $H$ is positive definite on the linear subspace $\sum_i u_i=0$.
The linear system can be solved using any sparse linear solver, such as Eigen \cite{eigenweb}.

For discrete surface Ricci flow with topological surgeries, we can
add one more step right after step 4. In this new step, we modify
the connectivity of $\Sigma$ to keep the triangulation to be (Power)
Delaunay. This will greatly improves the robustness as proved in \cite{Gu_Luo_Sun_Wu_13} and \cite{Gu_Guo_Luo_Sun_Wu_14}.


\end{document}